\newtheorem{theorem}{Theorem}[section]
\newtheorem{lemma}[theorem]{Lemma}
\newtheorem{corollary}[theorem]{Corollary}
\theoremstyle{definition}
\newtheorem{definition}[theorem]{Definition}
\newtheorem{remark}[theorem]{Remark}
\newtheorem{example}[theorem]{Example}
\numberwithin{equation}{section}
\mathchardef\hyphen="2D
\begin{document}
\allowdisplaybreaks
\title{Optimality of empirical measures as quantizers}
\author{March T.~Boedihardjo}
\address{Department of Mathematics, Michigan State University, East Lansing, MI 48824}
\email{boedihar@msu.edu}
\begin{abstract}
A common way to discretize a probability measure is to use an empirical measure as a discrete approximation. But how far from being optimal is this approximation in the $p$-Wasserstein distance? In this paper, we study this question in two contexts: (1) optimality among all uniform quantizers and (2) optimality among all (non-uniform) quantizers. In the first context, for $p=1$, we provide a complete answer to this question up to a $\mathrm{polylog}(n)$ factor. From the probabilistic point of view, this resolves, up to a $\mathrm{polylog}(n)$ factor, the problem of characterizing the expected 1-Wasserstein distance between a probability measure and its empirical measure in terms of non-random quantities. We also obtain some partial results for $p>1$ in the first context and for $p\geq 1$ in the second context.
\end{abstract}
\keywords{Quantization, Empirical measure, Wasserstein distance}
\subjclass[2020]{60B10}
\maketitle
\section{Introduction}
Most numbers need to be rounded to a certain decimal place before being stored in a computer. The same goes for probability measures. To store a probability measure, unless it is absolutely continuous and the density is easy to describe, one often needs to discretize the probability measure. While rounding a given number to the nearest, say, one hundredth is a simple procedure, discretizing a given probability measure while achieving the optimal approximation error is a highly nontrivial task \cite{GL}. Thus, in practice, one often discretizes a probability measure $\mu$ by simply using an empirical measure $\mu_{n}=\frac{1}{n}\sum_{i=1}^{n}\delta_{X_{i}}$ as a discrete approximation of the given measure $\mu$, where $X_{1},\ldots,X_{n}$ are independent samples of $\mu$. Here, $\delta_{x}$ denotes the probability measure with an atom at $x$ of mass $1$. Surprisingly, in many cases, the empirical measure $\mu_{n}$ gives an approximation error that is optimal up to a constant factor. But there are also cases for which the empirical measure is far from being optimal.

In this paper, we study the optimality of using the empirical measure $\mu_{n}$ as a discrete approximation of the given measure $\mu$. Before we continue our discussion, we need some terminologies.

Let $(E,d_{E})$ be a separable metric space and $p\geq 1$. Denote by $\mathcal{P}_{p}(E)$ the set of all probability measures $\mu$ on $E$ such that $\int_{E}d_{E}(x,x_{0})^{p}\,d\mu(x)<\infty$ for some $x_{0}\in E$ (or equivalently, for all $x_{0}\in E$). For $\mu,\nu\in\mathcal{P}_{p}(E)$, the {\it $p$-Wasserstein distance} between $\mu$ and $\nu$ is defined by
\[W_{p}(\mu,\nu):=\inf_{\gamma}\left(\int_{E\times E}d_{E}(x,y)^{p}\,d\gamma(x,y)\right)^{\frac{1}{p}},\]
where the infimum is over all probability distributions $\gamma$ on $E\times E$ with marginal distributions $\mu$ and $\nu$.
\begin{definition}\label{def1}
Suppose that $E$ is a separable metric space, $p\geq 1$, $\mu\in\mathcal{P}_{p}(E)$ and $n\in\mathbb{N}$. Define the {\it optimal quantization error} (see \cite{GL})
\[e_{n,p}(\mu):=\inf_{\nu}W_{p}(\mu,\nu),\]
where the infimum is over all probability measures $\nu$ on $E$ supported on at most $n$ points in $E$. Define the {\it optimal uniform quantization error} (see \cite{Chevallier, Quattrocchi})
\[b_{n,p}(\mu):=\inf_{x_{1},\ldots,x_{n}\in E}W_{p}\left(\mu,\frac{1}{n}\sum_{i=1}^{n}\delta_{x_{i}}\right).\]
\end{definition}
\begin{remark}\label{compare}
We always have
\[e_{n,p}(\mu)\leq b_{n,p}(\mu)\leq\mathbb{E}[W_{p}(\mu_{n},\mu)],\]
where $\mu_{n}$ is the empirical measure of $\mu$ with sample size $n$.
\end{remark}
\subsection{Empirical measure vs optimal uniform quantizer}
In this subsection, we study the optimality of the empirical measure $\mu_{n}$ among all discrete approximations of $\mu$ of the form $\frac{1}{n}\sum_{i=1}^{n}\delta_{x_{i}}$ for some $x_{1},\ldots,x_{n}\in E$. More precisely, let $X_{1},\ldots,X_{n}$ be independent samples of a probability measure $\mu$ on $E$. The question we study in this subsection is: How far is the tuple $(X_{1},\ldots,X_{n})$ from attaining the following infimum:
\[b_{n,p}(\mu)=\inf_{x_{1},\ldots,x_{n}\in E}W_{p}\left(\mu,\frac{1}{n}\sum_{i=1}^{n}\delta_{x_{i}}\right)?\]
In other words, is optimal selection of $x_{1},\ldots,x_{n}\in E$ significantly better than random selection of $x_{1},\ldots,x_{n}$ in terms of minimizing the error $W_{p}(\mu,\frac{1}{n}\sum_{i=1}^{n}\delta_{x_{i}})$, or is optimal selection only slightly better than random selection?
Simply put, is $b_{n,p}(\mu)\ll\mathbb{E}[W_{p}(\mu_{n},\mu)]$, or is $b_{n,p}(\mu)\sim\mathbb{E}[W_{p}(\mu_{n},\mu)]$, where $\mu_{n}=\frac{1}{n}\sum_{i=1}^{n}\delta_{X_{i}}$?

For a large class of absolutely continuous measures, the answer to this question is well understood in the asymptotic sense. Indeed, when $d>2p$, for an absolutely continuous probability measure $\mu$ on $\mathbb{R}^{d}$ with sufficiently many moments, the decay rate of $b_{n,p}(\mu)$, as $n\to\infty$, obtained in \cite[Theorem 1.1]{Quattrocchi} is asymptotically equivalent to the decay rate of $\mathbb{E}[W_{p}(\mu_{n},\mu)]$ obtained in \cite[Theorem 2]{BB} and \cite[Theorem 2]{DSS} up to a constant factor that depends on $p,d$. More precisely, under some moment assumptions, if $d>2p$ and $f$ is the density of $\mu$ with respect to the Lebesgue measure $\lambda$ on $\mathbb{R}^{d}$, then $b_{n,p}(\mu)$ and $\mathbb{E}[W_{p}(\mu_{n},\mu)]$ are both, up to constant factors, asymptotically equivalent to
\[n^{-1/d}\left(\int_{\mathbb{R}^{d}}f^{1-\frac{p}{d}}\,d\lambda\right)^{1/p}.\]

The practical use of quantization of measures gives rise to the need of understanding the non-asymptotic behaviors of $b_{n,p}(\mu)$ and $\mathbb{E}[W_{p}(\mu_{n},\mu)]$. However, these two quantities remain mysterious in the non-asymptotic sense for both discrete and absolutely continuous $\mu$ in general. Despite the mystery of each of these two quantities, the question of whether $b_{n,p}(\mu)\ll\mathbb{E}[W_{p}(\mu_{n},\mu)]$ or $b_{n,p}(\mu)\sim\mathbb{E}[W_{p}(\mu_{n},\mu)]$ turns out to be much more tractable as we see in this paper.

In this subsection, for $p=1$ (i.e., the 1-Wasserstein distance), we identify an obstruction, namely, the ``steady decay obstruction" that prevents the empirical measure $\mu_{n}$ from being optimal among all uniform quantizers, i.e., $b_{n,1}(\mu)\ll\mathbb{E}[W_{1}(\mu_{n},\mu)]$. More importantly, we show that this is the only such obstruction up to a $\mathrm{polylog}(n)$ factor.

We now explain this obstruction. When the $n$ is doubled, the errors always get smaller: $b_{2n,1}(\mu)\leq b_{n,1}(\mu)$ and $\mathbb{E}[W_{1}(\mu_{2n},\mu)]\leq\mathbb{E}[W_{1}(\mu_{n},\mu)]$. The former follows from the fact that for $x_{1},\ldots,x_{n}\in E$, we can write $\frac{1}{n}\sum_{i=1}^{n}\delta_{x_{i}}=\frac{1}{2n}\sum_{i=1}^{n}(\delta_{x_{i}}+\delta_{x_{i}})$, i.e., repeat each $x_{i}$. The latter follows from Lemma \ref{basicstability} below. Even though $b_{n,1}(\mu)$ and $\mathbb{E}[W_{1}(\mu_{n},\mu)]$ both get smaller when $n$ is doubled, they can decay in very different ways.

On the one hand, there is no control on how much $b_{2n,1}(\mu)$ can be smaller than $b_{n,1}(\mu)$. For example, when $\mu$ is the uniform measure on a metric space $(E,d_{E})$ consisting of $2n$ points in which $d_{E}(x,y)=1$ whenever $x\neq y$, we have $b_{n,1}(\mu)=\frac{1}{2}$ and $b_{2n,1}(\mu)=0$.

On the other hand, $\mathbb{E}[W_{1}(\mu_{n},\mu)]$ always decays steadily as $n$ gets large. See Lemma \ref{basicstability} and Lemma \ref{w1lb} below. The second statement of Lemma \ref{basicstability}, for example, shows that we always have
\begin{equation}\label{w1decayrule}
\frac{1}{2}\cdot\mathbb{E}[W_{1}(\mu_{n},\mu)]\leq\mathbb{E}[W_{1}(\mu_{2n},\mu)]\leq\mathbb{E}[W_{1}(\mu_{n},\mu)].
\end{equation}
Thus, since $\mathbb{E}[W_{1}(\mu_{n},\mu)]\geq b_{n,1}(\mu)$, it follows that
\[\mathbb{E}[W_{1}(\mu_{2n},\mu)]\geq\frac{1}{2}\cdot\mathbb{E}[W_{1}(\mu_{n},\mu)]\geq\frac{1}{2}\cdot b_{n,1}(\mu).\]
So when $b_{n,1}(\mu)\gg b_{2n,1}(\mu)$, this forces $\mathbb{E}[W_{1}(\mu_{2n},\mu)]\gg b_{2n,1}(\mu)$ and thus prevents the empirical measure $\mu_{2n}$ from being optimal among all discrete approximations of $\mu$ of the form $\frac{1}{2n}\sum_{i=1}^{2n}\delta_{x_{i}}$ for some $x_{1},\ldots,x_{2n}\in E$.
\vspace{0.5cm}

\begin{minipage}{0.4\textwidth}
\begin{center}
\includegraphics[scale=.4]{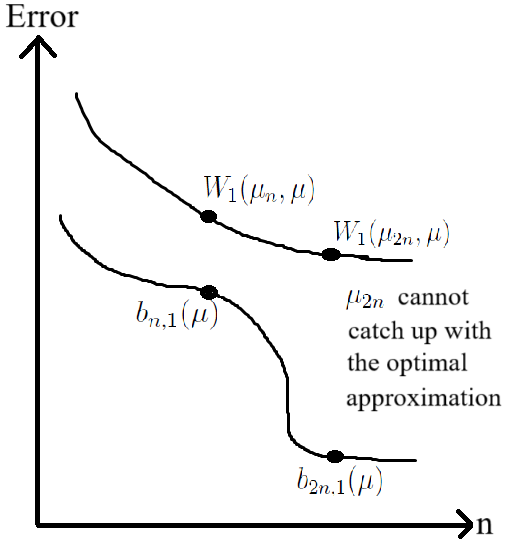}
\end{center}
\end{minipage}
\begin{minipage}{0.55\textwidth}
Simply put, since $\mathbb{E}[W_{1}(\mu_{n},\mu)]$ decays steadily, if a too steep drop $b_{n,1}(\mu)\gg b_{2n,1}(\mu)$ occurs at some $n$, then the empirical measure $\mu_{2n}$ cannot catch up with the optimal approximation. Moreover, even if the drop $b_{n,1}(\mu)\geq b_{2n,1}(\mu)$ is substantial but not too steep, multiple instances of such drops can accumulate and make $b_{n,1}(\mu)\ll\mathbb{E}[W_{1}(\mu_{n},\mu)]$ occur at some later $n$. This is the ``steady decay obstruction" that prevents an empirical measure from being optimal.
\end{minipage}
\vspace{0.5cm}

The first main result of this paper shows that up to a $\mathrm{polylog}(n)$ factor, the ``steady decay obstruction," as explained above, is the only obstruction that prevents the empirical measure $\mu_{n}$ from being optimal in the 1-Wasserstein distance.
\begin{theorem}\label{main1}
Suppose that $(E,d_{E})$ is a separable metric space, $\mu\in\mathcal{P}_{1}(E)$ and $n\in\mathbb{N}$. Then
\[\frac{1}{22\sqrt{\ln(2n)}}\cdot\max_{0\leq k\leq\lfloor\log_{2}n\rfloor}\sqrt{\frac{2^{k}}{n}}\cdot b_{2^{k},1}(\mu)\leq\mathbb{E}[W_{1}(\mu_{n},\mu)]\leq5\cdot\sum_{k=0}^{\lfloor\log_{2}n\rfloor}\sqrt{\frac{2^{k}}{n}}\cdot b_{2^{k},1}(\mu).\]
\end{theorem}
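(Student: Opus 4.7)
The proof will have two halves, the upper bound via chaining and the lower bound via a dual/variance construction; both are indexed by the dyadic scales $k=0,1,\ldots,m$ where $m=\lfloor\log_2 n\rfloor$. At each scale I would use a (near-)optimal uniform quantizer of $\mu$ with $2^k$ atoms achieving cost close to $b_{2^k,1}(\mu)$.

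\textbf{Upper bound.} The plan is to invoke Kantorovich--Rubinstein duality $W_1(\mu_n,\mu)=\sup_{f\text{ 1-Lip}}\int f\,d(\mu_n-\mu)$ and then chain $f$ across scales. A subsidiary lemma is needed: the existence of \emph{nested} partitions $V^0\supseteq V^1\supseteq\cdots\supseteq V^m$ (with $V^k$ having $2^k$ pieces of $\mu$-mass $1/2^k$) and representatives $y^k_j$ such that $\sum_j\int_{V^k_j}d(x,y^k_j)\,d\mu(x)\lesssim b_{2^k,1}(\mu)$ at every level; such a hierarchy can be built by greedy merging starting from a near-optimal finest-scale quantizer. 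With $\pi_k(x):=y^k_j$ on $V^k_j$, I would telescope $f=f\circ\pi_0+\sum_{k=1}^m(f\circ\pi_k-f\circ\pi_{k-1})+(f-f\circ\pi_m)$; the $\pi_0$-term integrates to zero against $\mu_n-\mu$. Thanks to nestedness, $f\circ\pi_k-f\circ\pi_{k-1}$ is constant on each $V^k_j$ with modulus $\le d(y^k_j,y^{k-1}_{j'(j)})$, so
\[\mathbb{E}\sup_f\Big|\int(f\circ\pi_k-f\circ\pi_{k-1})\,d(\mu_n-\mu)\Big|\le\sum_j d(y^k_j,y^{k-1}_{j'(j)})\cdot\mathbb{E}|\mu_n(V^k_j)-\mu(V^k_j)|.\]
Inserting $\mathbb{E}|\mu_n(V^k_j)-\mu(V^k_j)|\le 1/\sqrt{2^k n}$ and the identity $\sum_j d(y^k_j,y^{k-1}_{j'(j)})=2^k\int d(\pi_k,\pi_{k-1})\,d\mu\lesssim 2^k b_{2^{k-1},1}(\mu)$ bounds the scale-$k$ contribution by a constant times $\sqrt{2^k/n}\,b_{2^{k-1},1}(\mu)$. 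The tail contribution is at most $2\int d(x,\pi_m(x))\,d\mu\lesssim b_{2^m,1}(\mu)\le\sqrt 2\sqrt{2^m/n}\,b_{2^m,1}(\mu)$ (using $2^m\ge n/2$). Summing over $k$ and tracking constants gives the factor $5$.

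\textbf{Lower bound.} For each $k\le m$, the target is $\mathbb{E}[W_1(\mu_n,\mu)]\gtrsim\sqrt{2^k/n}\,b_{2^k,1}(\mu)/\sqrt{\ln(2n)}$. My plan is to exhibit a 1-Lipschitz function $f_k$, adapted to the near-optimal $2^k$-quantizer, whose $\mu$-variance satisfies $\mathrm{Var}_\mu(f_k)\gtrsim 2^k b_{2^k,1}(\mu)^2/\ln(2n)$, and then combine with the Khintchine--Paley--Zygmund lower bound $\mathbb{E}|\int f_k\,d(\mu_n-\mu)|\gtrsim\sqrt{\mathrm{Var}_\mu(f_k)/n}$ and the duality inequality $W_1(\mu_n,\mu)\ge|\int f_k\,d(\mu_n-\mu)|$. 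To build $f_k$ I would assign i.i.d.\ random signs $\varepsilon_j\in\{\pm1\}$ to the quantizer cells and take a 1-Lipschitz regularization of the step function $x\mapsto\varepsilon_{j(x)}\,d(x,y^k_{j(x)})$ (for instance via inf-convolution with the metric); a union-bound/concentration step over the $2^{2^k}$ sign configurations then extracts a good deterministic realization, at the cost of the $\sqrt{\ln(2n)}$ factor.

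\textbf{Main obstacle.} The hard step will be the lower bound. Merely iterating the basic monotonicity $\mathbb{E}[W_1(\mu_{2n},\mu)]\ge\tfrac12\mathbb{E}[W_1(\mu_n,\mu)]$ from Lemma~\ref{basicstability} yields only a factor $2^k/n$ instead of the required $\sqrt{2^k/n}$; closing this exponent gap from $1$ to $\tfrac12$ demands genuinely exploiting the $\sqrt n$-scale fluctuations of $\mu_n-\mu$ rather than monotonicity alone. I expect the crucial input to come from Lemma~\ref{w1lb}, and the $\sqrt{\ln(2n)}$ penalty in the theorem reflects the cost of this extra structural information (typically a union bound over $O(2^k)$ cells or sign patterns).
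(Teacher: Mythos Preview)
Your upper bound strategy is close in spirit to the paper's but differs in execution. The paper does \emph{not} build nested equal-mass partitions; instead it picks, independently at each scale, a near-optimal uniform quantizer $\mu^{(k)}\in\mathcal{U}(E,2^{k})$ and chains the quantities $\|W_{1}(\mu_{n}^{(k)},\mu^{(k)})\|_{L^{1}}$ along $k$ via a transport map $T$ sending the atoms of $\mu^{(k+1)}$ to those of $\mu^{(k)}$ (Lemma~\ref{chainingwp} with $p=1$). This avoids your ``subsidiary lemma,'' which is not obviously true: greedy merging from the finest scale need not produce partitions with cost $\lesssim b_{2^{k},1}(\mu)$ at every coarser level $k$, and your telescoping argument genuinely needs that (or at least $\int d(\pi_{k},\pi_{k-1})\,d\mu\lesssim b_{2^{k-1},1}(\mu)$, which follows from it). The paper's device of chaining between measures rather than between nested partitions is exactly what fills this gap.

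The lower bound sketch has a more serious flaw. Your plan is to exhibit a \emph{single} $1$-Lipschitz function $f_{k}$ with $\mathrm{Var}_{\mu}(f_{k})\gtrsim 2^{k}\,b_{2^{k},1}(\mu)^{2}/\ln(2n)$ and then use $\mathbb{E}[W_{1}(\mu_{n},\mu)]\ge\mathbb{E}|\int f_{k}\,d(\mu_{n}-\mu)|\gtrsim\sqrt{\mathrm{Var}_{\mu}(f_{k})/n}$. But such an $f_{k}$ need not exist. Take $E$ to be $2^{\ell}$ points with the discrete metric $d(x,y)=\mathbf{1}_{x\neq y}$ and $\mu$ uniform, with $n=2^{\ell}$. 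For $k=\ell-1$ one has $b_{2^{k},1}(\mu)=\tfrac12$, so your claimed variance bound demands $\mathrm{Var}_{\mu}(f_{k})\gtrsim 2^{\ell}/\ln(2^{\ell+1})$, which diverges in $\ell$; yet every $1$-Lipschitz function on a space of diameter $1$ has variance at most $\tfrac14$. The point is that $W_{1}(\mu_{n},\mu)$ is a \emph{supremum} over Lipschitz $f$, and the near-maximizer depends on the random realization $\mu_{n}$; no deterministic (or random-sign) test function can witness the $\sqrt{2^{k}/n}$ rate.

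You are right that Lemma~\ref{w1lb} is the crucial input, but its mechanism is different from what you describe. The paper groups the $2^{k_{0}}$ samples into $m=2^{k_{0}-k}$ blocks of size $2^{k}$, performs a \emph{partial symmetrization} over blocks (swapping $(X_{i,\cdot})$ with $(Y_{i,\cdot})$ according to Rademacher signs $s_{i}$), and then lower-bounds the resulting expectation by a type--cotype duality argument (Lemma~\ref{w1prelb}): one pairs each $s_{i}L_{i}$ with an independently chosen Lipschitz $f_{i}$, uses $\mathbb{E}\|\sum s_{i}f_{i}\|_{\mathrm{Lip}}\lesssim\sqrt{m\ln(2mn)}$ (Hoeffding plus union bound), and dualizes to get $\mathbb{E}\|\sum s_{i}L_{i}\|_{*}\gtrsim(m\ln(2mn))^{-1/2}\sum\|L_{i}\|_{*}$. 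This is where the $\sqrt{m}$ saving and the $\sqrt{\ln(2n)}$ loss come from, and it intrinsically treats the supremum over $f$ rather than fixing a single test function.
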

\begin{remark}
The upper and lower bounds in Theorem \ref{main1} differ by a factor of at most $110\sqrt{\ln(2n)}\cdot\lfloor\log_{2}n\rfloor$. From the probabilistic point of view, Theorem \ref{main1} resolves, up to a $\mathrm{polylog}(n)$ factor, the problem of characterizing $\mathbb{E}[W_{1}(\mu_{n},\mu)]$ in terms of non-random quantities. This problem has been studied since \cite[Section 3]{Dudley}. See \cite{BoissardW1, BL, DY, Talagrandmatching, WB}. Note that getting the correct log factor in estimating $\mathbb{E}[W_{1}(\mu_{n},\mu)]$ is a highly nontrivial task, e.g., when $\mu$ is the uniform measure on $[0,1]^{2}$. See \cite{Ajtai} and the survey paper \cite{Ledoux}.
\end{remark}
\begin{remark}
The characterization of $\mathbb{E}[W_{1}(\mu_{n},\mu)]$ in Theorem \ref{main1} has one drawback, namely, the notion of optimal uniform quantization error $b_{2^{k},1}(\mu)$ is not very well understood. To mitigate this, in the next subsection, we give a bound for $b_{2^{k},1}(\mu)$ in terms of the classical optimal quantization errors $e_{2^{j},q}(\mu)$, which have been well studied in the literature \cite{GL}. This enables us to directly bound $\mathbb{E}[W_{1}(\mu_{n},\mu)]$ in terms of the optimal quantization errors $e_{2^{j},q}(\mu)$. See Corollary \ref{main4}. We also illustrate the sharpness of this bound. See Corollary \ref{supmu} and Example \ref{abscont}.
\end{remark}
The second main result of this paper generalizes the upper bound in Theorem \ref{main1} to the $p$-Wasserstein distance.
\begin{theorem}\label{main2}
Suppose that $(E,d_{E})$ is a separable metric space, $p\geq 1$, $\mu\in\mathcal{P}_{p}(E)$ and $n\in\mathbb{N}$. Then
\[\left(\mathbb{E}[W_{p}(\mu_{n},\mu)^{p}]\right)^{1/p}\leq5\cdot\sum_{k=0}^{\lfloor\log_{2}n\rfloor}\left(\frac{2^{k}}{n}\right)^{1/(2p)}\cdot b_{2^{k},p}(\mu).\]
\end{theorem}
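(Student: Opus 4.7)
My plan is to adapt the dyadic multiscale strategy that presumably underlies the upper bound in Theorem~\ref{main1}, combining couplings with Minkowski's inequality in $L^{p}(\Omega)$. First I would reduce to $n=2^{N}$ by monotonicity (a $W_{p}^{p}$ analog of Lemma~\ref{basicstability}, since the right-hand side is monotone in $N=\lfloor\log_{2}n\rfloor$). At each dyadic scale $k\in\{0,\ldots,N\}$ I fix a near-optimal uniform quantizer $\nu^{(k)}=2^{-k}\sum_{j=1}^{2^{k}}\delta_{y_{j}^{(k)}}$ with $W_{p}(\mu,\nu^{(k)})\leq b_{2^{k},p}(\mu)+\varepsilon$, and via a measurable selection from the optimal coupling $\gamma_{k}$ of $\mu$ and $\nu^{(k)}$, produce scale-$k$ companions $Y_{i}^{(k)}\sim\gamma_{k}(\cdot\mid X_{i})$ for each sample $X_{i}$. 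The empirical $\tilde{\nu}_{n}^{(k)}:=n^{-1}\sum_{i=1}^{n}\delta_{Y_{i}^{(k)}}$ is then the $n$-sample empirical of the $2^{k}$-atom measure $\nu^{(k)}$, and the diagonal pairing $X_{i}\leftrightarrow Y_{i}^{(k)}$ couples $\mu_{n}$ and $\tilde{\nu}_{n}^{(k)}$ at expected $p$-cost $W_{p}(\mu,\nu^{(k)})^{p}$.

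The chain I would use at each scale $k$ is
\[
(\mathbb{E}[W_{p}(\mu_{n},\mu)^{p}])^{1/p}\leq(\mathbb{E}[W_{p}(\mu_{n},\tilde{\nu}_{n}^{(k)})^{p}])^{1/p}+(\mathbb{E}[W_{p}(\tilde{\nu}_{n}^{(k)},\nu^{(k)})^{p}])^{1/p}+W_{p}(\nu^{(k)},\mu)
\]
(triangle inequality followed by Minkowski for $W_{p}$). The first and third summands are each at most $b_{2^{k},p}(\mu)+\varepsilon$. Iterating this decomposition (recursing on the middle term, viewing $\nu^{(k)}$ in place of $\mu$ and exploiting that $b_{2^{j},p}(\nu^{(k)})=0$ for $j\geq k$) produces a telescoping sum over all dyadic scales, where each scale ideally contributes a term of size $(2^{k}/n)^{1/(2p)}b_{2^{k},p}(\mu)$.

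The hard part will be proving the single-scale estimate $(\mathbb{E}[W_{p}(\tilde{\nu}_{n}^{(k)},\nu^{(k)})^{p}])^{1/p}\lesssim(2^{k}/n)^{1/(2p)}b_{2^{k},p}(\mu)$. The rate $(2^{k}/n)^{1/(2p)}$ arises from multinomial concentration: $\tilde{\nu}_{n}^{(k)}$ is obtained by sampling uniformly $n$ times from the $2^{k}$ atoms of $\nu^{(k)}$, so its total variation deviation from $\nu^{(k)}$ has expected size $\asymp\sqrt{2^{k}/n}$, and a $W_{p}^{p}$ bound of the shape ``(length scale)$^{p}\cdot d_{\text{TV}}$'' yields the exponent $1/(2p)$. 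Replacing the crude diameter of $\mathrm{supp}\,\nu^{(k)}$ by the correct length scale $b_{2^{k},p}(\mu)$ is the delicate point: I would construct the $\nu^{(k)}$'s in a (near-)nested fashion, splitting each atom of $\nu^{(k-1)}$ into two neighboring ``children'' in $\nu^{(k)}$, so that multinomial excess mass at scale $k$ can be transported only between nearby sibling atoms whose separation is controlled by $b_{2^{k-1},p}(\mu)\asymp b_{2^{k},p}(\mu)$. Summing these per-scale contributions yields the stated bound $5\sum_{k=0}^{\lfloor\log_{2}n\rfloor}(2^{k}/n)^{1/(2p)}b_{2^{k},p}(\mu)$.
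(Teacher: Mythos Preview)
Your overall plan---dyadic chaining through near-optimal uniform quantizers $\nu^{(k)}$ at levels $2^{k}$---is the same strategy the paper uses. But the execution has a genuine gap: the ``single-scale estimate'' you isolate as the hard part,
\[
(\mathbb{E}[W_{p}(\tilde{\nu}_{n}^{(k)},\nu^{(k)})^{p}])^{1/p}\lesssim(2^{k}/n)^{1/(2p)}\,b_{2^{k},p}(\mu),
\]
is false in general. Take $\mu$ uniform on $[0,1]$ and $p=1$: at the top scale $k=\lfloor\log_{2}n\rfloor$ the right side is $\asymp 1/n$, whereas the left side---the empirical $W_{1}$ error of a uniform measure on $\asymp n$ equispaced points sampled $n$ times---is $\asymp 1/\sqrt{n}$. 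More structurally, your triangle-inequality chain contributes $2b_{2^{k},p}(\mu)$ from the two outer terms at each recursion step \emph{without} any factor $(2^{k}/n)^{1/(2p)}$; telescoping that recursion (via Lemma~\ref{easybound}) only yields $\sum_{k}b_{2^{k},p}(\mu)$, strictly weaker than the theorem. Your sibling-transport fix also leans on $b_{2^{k-1},p}(\mu)\asymp b_{2^{k},p}(\mu)$, which is precisely the ``steady decay obstruction'' the introduction warns against: that ratio can be arbitrarily large.

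What is actually needed is not an absolute bound at each scale but an \emph{increment} bound between consecutive scales. The paper proves (Lemma~\ref{chainingwp}) that for $\nu^{(k)}$ supported on $2^{k}$ distinct points and $\nu^{(k-1)}$ on $2^{k-1}$ points,
\[
\|W_{p}^{\$}((\nu^{(k)})_{n},\nu^{(k)})\|_{L^{p}}-\|W_{p}^{\$}((\nu^{(k-1)})_{n},\nu^{(k-1)})\|_{L^{p}}\;\lesssim\;(2^{k}/n)^{1/(2p)}\,W_{p}(\nu^{(k)},\nu^{(k-1)}),
\]
where $W_{p}^{\$}$ (Definition~\ref{dollardef}) is a modified distance with $W_{p}\leq W_{p}^{\$}$ that, crucially, satisfies a one-step contraction under the coarsening pushforward (Lemma~\ref{basicbound}). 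The factor $(2^{k}/n)^{1/(2p)}$ enters because the pointwise deviations $|\mu_{n}(\{y_{i}\})-2^{-k}|$ at the $2^{k}$ atoms each have standard deviation $\asymp(n2^{k})^{-1/2}$, and they are weighted by the transport costs $d_{E}(y_{i},z_{\sigma(i)})^{p}$ whose average is $W_{p}(\nu^{(k)},\nu^{(k-1)})^{p}$---not by $b_{2^{k},p}(\mu)^{p}$ and not by the support diameter. Telescoping the increments and bounding $W_{p}(\nu^{(k)},\nu^{(k-1)})\leq b_{2^{k},p}(\mu)+b_{2^{k-1},p}(\mu)$ then gives the theorem. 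Your nested/sibling picture is morally this increment mechanism, but it must be formulated as such; for $p>1$ the $W_{p}^{\$}$ device (or an equivalent) is what makes the ``collapse to parent, then recurse'' accounting rigorous, since plain $W_{p}$ does not split cleanly under such a pushforward.
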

The following consequence of Theorem \ref{main2} shows that as a discrete approximation, the empirical measure $\mu_{n}$ gives an expected error $\mathbb{E}[W_{p}(\mu_{n},\mu)]$ that is, up to a $\log n$ factor, better than any sequence $\mu^{(n)}$ of uniform quantizers of $\mu$ with steadily decaying errors $W_{p}(\mu^{(n)},\mu)$.
\begin{corollary}
Let $(E,d_{E})$ be a separable metric space, $p\geq 1$ and $\mu\in\mathcal{P}_{p}(E)$. Let $J=\{2^{k}:k\in\mathbb{N}\cup\{0\}\}$. Suppose that $(\mu^{(n})_{n\in J}$ is a sequence of probability measures on $E$ such that each $\mu^{(n)}$ is of the form $\frac{1}{n}\sum_{i=1}^{n}\delta_{x_{i}}$ for some $x_{1},\ldots,x_{n}\in E$. If
\[W_{p}(\mu^{(2n)},\mu)\geq 2^{-1/(2p)}\cdot W_{p}(\mu^{(n)},\mu),\]
for all $n\in J$, then
\[W_{p}(\mu^{(n)},\mu)\geq\frac{1}{5(1+\log_{2}n)}\cdot \mathbb{E}[W_{p}(\mu_{n},\mu)],\]
for all $n\in J$.
\end{corollary}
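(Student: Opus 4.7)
The plan is to invoke Theorem~\ref{main2} and show that the steady decay hypothesis forces every term in the resulting sum to be at most $W_{p}(\mu^{(n)},\mu)$, so the bound collapses to $O(\log n)$ copies of $W_{p}(\mu^{(n)},\mu)$. I do not expect any real obstacle, since the exponent $1/(2p)$ in the hypothesis is manifestly matched to the weight $(2^{k}/n)^{1/(2p)}$ appearing in Theorem~\ref{main2}; the result is essentially a one-line consequence once one performs this matching.

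First I would use that each $\mu^{(2^{k})}$ is, by assumption, a specific probability measure of the form $\frac{1}{2^{k}}\sum_{i=1}^{2^{k}}\delta_{x_{i}}$, so the definition of $b_{2^{k},p}(\mu)$ as an infimum over all such measures gives $b_{2^{k},p}(\mu)\leq W_{p}(\mu^{(2^{k})},\mu)$. Writing $n=2^{m}\in J$ so that $\lfloor\log_{2}n\rfloor=m$, Theorem~\ref{main2} then yields
\[\left(\mathbb{E}[W_{p}(\mu_{n},\mu)^{p}]\right)^{1/p}\leq 5\sum_{k=0}^{m}\left(\frac{2^{k}}{n}\right)^{1/(2p)}W_{p}(\mu^{(2^{k})},\mu).\]

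Next I would iterate the hypothesis. Applying the inequality $W_{p}(\mu^{(2n)},\mu)\geq 2^{-1/(2p)}W_{p}(\mu^{(n)},\mu)$ a total of $m-k$ times between indices $2^{k}$ and $2^{m}=n$, I obtain
\[W_{p}(\mu^{(2^{k})},\mu)\leq 2^{(m-k)/(2p)}\,W_{p}(\mu^{(n)},\mu)=\left(\frac{n}{2^{k}}\right)^{1/(2p)}W_{p}(\mu^{(n)},\mu).\]
This cancels the weight $(2^{k}/n)^{1/(2p)}$ exactly, so every term of the sum is at most $W_{p}(\mu^{(n)},\mu)$. Since the sum has $m+1=1+\log_{2}n$ terms, the displayed bound becomes
\[\left(\mathbb{E}[W_{p}(\mu_{n},\mu)^{p}]\right)^{1/p}\leq 5(1+\log_{2}n)\,W_{p}(\mu^{(n)},\mu).\]
Finally, Jensen's inequality gives $\mathbb{E}[W_{p}(\mu_{n},\mu)]\leq(\mathbb{E}[W_{p}(\mu_{n},\mu)^{p}])^{1/p}$, and rearranging delivers the stated conclusion.
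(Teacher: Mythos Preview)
Your proof is correct and follows essentially the same approach as the paper's: both use $b_{2^{k},p}(\mu)\leq W_{p}(\mu^{(2^{k})},\mu)$, apply Theorem~\ref{main2}, observe that the hypothesis makes $2^{k/(2p)}W_{p}(\mu^{(2^{k})},\mu)$ nondecreasing in $k$ so every term in the sum is bounded by $W_{p}(\mu^{(n)},\mu)$, and count the $1+\log_{2}n$ terms. The only cosmetic difference is that you make the Jensen step $\mathbb{E}[W_{p}(\mu_{n},\mu)]\leq(\mathbb{E}[W_{p}(\mu_{n},\mu)^{p}])^{1/p}$ explicit at the end, whereas the paper absorbs it silently when invoking Theorem~\ref{main2}.
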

\begin{proof}
Let $c_{n}=W_{p}(\mu^{(n)},\mu)$ for $n\in J$. Then $c_{2n}\geq 2^{-1/(2p)}c_{n}$ and $c_{n}\geq b_{n,p}(\mu)$ for all $n\in J$. Thus, the sequence $2^{k/(2p)}c_{2^{k}}$ is increasing in $k\in\mathbb{N}\cup\{0\}$, and so $2^{k/(2p)}c_{2^{k}}\leq n^{1/(2p)}c_{n}$ for all $n\in J$ and all $0\leq k\leq\log_{2}n$. By Theorem \ref{main2},
\begin{eqnarray*}
\mathbb{E}[W_{p}(\mu_{n},\mu)]&\leq&5\cdot\sum_{k=0}^{\log_{2}n}n^{-1/(2p)}\cdot 2^{k/(2p)} b_{2^{k},p}(\mu)\\&\leq&
5\cdot\sum_{k=0}^{\log_{2}n}n^{-1/(2p)}\cdot 2^{k/(2p)}c_{2^{k}}\\&\leq&5\cdot\sum_{k=0}^{\log_{2}n}n^{-1/(2p)}\cdot n^{1/(2p)}c_{n}=5(1+\log_{2}n)c_{n},
\end{eqnarray*}
for all $n\in J$.
\end{proof}
The following consequence of Theorem \ref{main2} gives a comparison between the decay rate of $\left(\mathbb{E}[W_{p}(\mu_{n},\mu)^{p}]\right)^{1/p}$ and the decay rate of $b_{n,p}(\mu)$. The proof uses a standard geometric series argument and is postponed to Section \ref{sectionremaining}.
\begin{corollary}\label{main2corollary}
Suppose that $(E,d_{E})$ is a separable metric space, $p\geq 1$ and $\mu\in\mathcal{P}_{p}(E)$ is not of the form $\delta_{x}$ (i.e., supported on more than one point). Then
\begin{align}\label{main2corollaryeq1}
\max\left(-\frac{1}{2},\,\limsup_{n\to\infty}\frac{\ln b_{n,p}(\mu)}{\ln n}\right)\leq&
\limsup_{n\to\infty}\frac{\ln\left(\mathbb{E}[W_{p}(\mu_{n},\mu)^{p}]\right)^{1/p}}{\ln n}\\\leq&
\max\left(-\frac{1}{2p},\,\limsup_{n\to\infty}\frac{\ln b_{n,p}(\mu)}{\ln n}\right).\nonumber
\end{align}
Moreover, for every $0\leq\beta<\frac{1}{2p}$, we have
\begin{equation}\label{main2corollaryeq2}
\limsup_{n\to\infty}n^{\beta}\cdot b_{n,p}(\mu)\leq\limsup_{n\to\infty}n^{\beta}\cdot\left(\mathbb{E}[W_{p}(\mu_{n},\mu)^{p}]\right)^{1/p}\leq\frac{10}{\frac{1}{2p}-\beta}\cdot\limsup_{n\to\infty}n^{\beta}\cdot b_{n,p}(\mu).
\end{equation}
\end{corollary}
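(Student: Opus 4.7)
The plan is to combine Theorem \ref{main2} (which controls $(\mathbb{E}[W_{p}(\mu_{n},\mu)^{p}])^{1/p}$ by a dyadic sum of the $b_{2^{k},p}(\mu)$) with the trivial comparison $b_{n,p}(\mu)\leq(\mathbb{E}[W_{p}(\mu_{n},\mu)^{p}])^{1/p}$ coming from Remark \ref{compare} and Jensen's inequality, and then to close the gap with a standard geometric-series estimate.

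For the lower bounds, the pointwise inequality $b_{n,p}(\mu)\leq\mathbb{E}[W_{p}(\mu_{n},\mu)]\leq(\mathbb{E}[W_{p}(\mu_{n},\mu)^{p}])^{1/p}$ immediately yields the left-hand side of \eqref{main2corollaryeq2} as well as the term $\limsup \ln b_{n,p}/\ln n$ on the left-hand side of \eqref{main2corollaryeq1}. To obtain the $-1/2$ contribution on the left of \eqref{main2corollaryeq1}, I would invoke Lemma \ref{w1lb}: since $\mu$ is not a point mass, $\mathbb{E}[W_{1}(\mu_{n},\mu)]\geq c\,n^{-1/2}$ for some $c>0$ and all $n$ large. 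Jensen applied to the optimal coupling shows $W_{p}\geq W_{1}$ whenever $p\geq 1$, so $(\mathbb{E}[W_{p}(\mu_{n},\mu)^{p}])^{1/p}\geq\mathbb{E}[W_{1}(\mu_{n},\mu)]\geq c\,n^{-1/2}$, which after taking $\ln/\ln n$ yields the $-1/2$ bound.

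For the right-hand inequality in \eqref{main2corollaryeq1}, I would fix any $\gamma'>\max(-1/(2p),\limsup \ln b_{n,p}/\ln n)$, so that $\gamma'>-1/(2p)$ and $b_{2^{k},p}(\mu)\leq (2^{k})^{\gamma'}$ for all $k\geq K$. Inserting this in Theorem \ref{main2} and splitting the sum at $K$, the initial $K$ terms contribute $O(n^{-1/(2p)})$, while the tail is a geometric sum with ratio $2^{1/(2p)+\gamma'}>1$ and hence of order its largest term $\Theta(n^{1/(2p)+\gamma'})$, contributing $\Theta(n^{\gamma'})$ after multiplying by $n^{-1/(2p)}$. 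Since $\gamma'>-1/(2p)$ the $n^{\gamma'}$ piece dominates, so $(\mathbb{E}[W_{p}(\mu_{n},\mu)^{p}])^{1/p}=O(n^{\gamma'})$, and letting $\gamma'$ approach its lower limit closes \eqref{main2corollaryeq1}. For the right-hand inequality in \eqref{main2corollaryeq2}, I would set $B_{\epsilon}=\limsup n^{\beta}b_{n,p}(\mu)+\epsilon$ and use $b_{2^{k},p}(\mu)\leq B_{\epsilon}\,2^{-k\beta}$ for $k$ large; the corresponding tail is geometric with positive exponent $x:=1/(2p)-\beta>0$, which leads to
\[
n^{\beta}\,(\mathbb{E}[W_{p}(\mu_{n},\mu)^{p}])^{1/p}\;\leq\; 5\,B_{\epsilon}\cdot\frac{2^{x}}{2^{x}-1}+o(1).
\]

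The main, and essentially only, non-routine step will be extracting the explicit constant $10/(1/(2p)-\beta)$, which reduces to the elementary inequality $2^{x}/(2^{x}-1)\leq 2/x$ on $(0,1]$, equivalently $f(x):=2^{x}(2-x)-2\geq 0$ there. I would verify this by checking that $f(0)=f(1)=0$, $f'(0)=2\ln 2-1>0$, $f'(1)=2(\ln 2-1)<0$, and that $f$ is unimodal on $[0,1]$, which forces $f\geq 0$ throughout; since $1/(2p)-\beta\in(0,1/2]$, this is precisely the range needed.
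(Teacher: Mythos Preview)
Your proposal is correct and follows essentially the same route as the paper: Theorem~\ref{main2} plus a geometric-series tail estimate for \eqref{main2corollaryeq2}, and the trivial comparison $b_{n,p}\leq(\mathbb{E}[W_{p}^{p}])^{1/p}$ together with a $W_{1}$ lower bound for the left of \eqref{main2corollaryeq1}. Two small remarks. First, Lemma~\ref{w1lb} (with $n=1$) actually yields $\mathbb{E}[W_{1}(\mu_{n},\mu)]\geq c/\sqrt{n\ln(2n)}$ rather than $c\,n^{-1/2}$; the extra logarithm is harmless for the $\limsup\ln/\ln n$ conclusion, and the paper phrases this step equivalently via the $k=0$ term of Theorem~\ref{main1}. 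Second, your inequality $2^{x}/(2^{x}-1)\leq 2/x$ on $(0,1]$ is exactly Lemma~\ref{2a}, which the paper proves by the one-line convexity bound $2^{-x}\leq x\cdot 2^{-1}+(1-x)\cdot 1=1-x/2$; this is simpler than your calculus argument. The paper also obtains the upper bound in \eqref{main2corollaryeq1} by bootstrapping from the already-proved \eqref{main2corollaryeq2} (choosing $\beta$ just above $-\alpha_{0}$) rather than redoing the geometric sum, but your direct version is equally valid.
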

\begin{remark}
The constant $-\frac{1}{2}$ in the lower bound in (\ref{main2corollaryeq1}) is optimal. Indeed, when $\mu$ is the uniform distribution on the unit interval $[0,1]$, we have $\left(\mathbb{E}[W_{p}(\mu_{n},\mu)^{p}]\right)^{1/p}\leq C\sqrt{\frac{p}{n}}$ for all $p\geq 1$ and $n\in\mathbb{N}$, where $C\geq 1$ is a absolute constant \cite[Theorem 4.8]{BobkovLedoux}. On the other hand, it is easy to see that $b_{n,p}(\mu)\leq\frac{1}{n}$. Therefore,
\[\limsup_{n\to\infty}\frac{\ln\left(\mathbb{E}[W_{p}(\mu_{n},\mu)^{p}]\right)^{1/p}}{\ln n}\leq-\frac{1}{2}\quad\text{and}\quad\limsup_{n\to\infty}\frac{\ln b_{n,p}(\mu)}{\ln n}\leq-1.\]
\end{remark}
\begin{remark}
The constant $-\frac{1}{2p}$ in the upper bound in (\ref{main2corollaryeq1}) is also optimal. Indeed, if $\mu$ is uniformly distributed on two points 0 and 1, we have $c\cdot n^{-1/(2p)}\leq\left(\mathbb{E}[W_{p}(\mu_{n},\mu)^{p}]\right)^{1/p}\leq C\cdot n^{-1/(2p)}$ for all $n\in\mathbb{N}$, where $C,c>0$ are absolute constants. On the other hand, it is easy to see that $b_{n,p}(\mu)\leq n^{-1/p}$ for all $n\in\mathbb{N}$. So
\[\limsup_{n\to\infty}\frac{\ln\left(\mathbb{E}[W_{p}(\mu_{n},\mu)^{p}]\right)^{1/p}}{\ln n}=-\frac{1}{2p}\quad\text{and}\quad\limsup_{n\to\infty}\frac{\ln b_{n,p}(\mu)}{\ln n}\leq-\frac{1}{p}.\]
\end{remark}
\begin{remark}
Some upper and lower bounds for $\limsup_{n\to\infty}\frac{\ln\mathbb{E}[W_{p}(\mu_{n},\mu)]}{\ln n}$ are obtained in \cite{WB}. However, it is not obvious how to compare those bounds with Corollary \ref{main2corollary}.
\end{remark}
\subsection{Empirical measure vs optimal quantizer}
In the previous subsection, we bound $\left(\mathbb{E}[W_{p}(\mu_{n},\mu)^{p}]\right)^{1/p}$ in terms of the optimal uniform quantization errors $b_{2^{k},p}(\mu)$. However, the notion of optimal uniform quantization error is less well understand than the classical notion of optimal quantization error $e_{n,p}(\mu)$ defined in Definition \ref{def1}.

In this subsection, we bound $\left(\mathbb{E}[W_{p}(\mu_{n},\mu)^{p}]\right)^{1/p}$ in terms of the optimal quantization errors $e_{r,q}(\mu)$. We have observed in the previous subsection that for $p=1$, up to a $\mathrm{polylog}(n)$ factor, the ``steady decay obstruction" is the only obstruction that prevents the empirical measure $\mu_{n}$ from being optimal among all the discrete approximations of the form $\frac{1}{n}\sum_{i=1}^{n}\delta_{x_{i}}$. However, in this subsection, when we compare the empirical measure $\mu_{n}$ with the larger class of discrete approximations of the form $\sum_{i=1}^{n}a_{i}\delta_{x_{i}}$ (i.e., non-uniform weights allowed), there are more obstructions that prevent the empirical measure $\mu_{n}$ from being optimal among this larger class of discrete approximations.

One obstruction is what we call the ``low likelihood obstruction." Let $\mu$ be a probability measure on a set $E$. Suppose that there is a subset $A\subset E$ for which $\mu(A)$ is small, i.e., $A$ has low likelihood of occurring. When one takes $n$ independent samples $X_{1},\ldots,X_{n}$ to form the empirical measure $\mu_{n}=\frac{1}{n}\sum_{i=1}^{n}\delta_{X_{i}}$, because $\mu(A)$ is small, not too many of those $X_{i}$ are in $A$. On the other hand, for the optimal quantization error $e_{n,p}(\mu)$, one is free to allocate as many or as little points to the set $A$. This extra freedom possessed by the optimal quantization could make its approximation error $e_{n,p}(\mu)$ significantly less than the approximation error $\mathbb{E}[W_{p}(\mu_{n},\mu)]$ of the empirical measure. To illustrate the ``low likelihood obstruction," we go through the following example.
\begin{example}\label{mixexample}
Consider the $\|\,\|_{\infty}$ metric on $\mathbb{R}^{3}$. Let $U$ be the uniform measure on the unit cube $[0,1]^{3}$. Let $0<\alpha<1$ be a small number. Consider the probability measure
\[\mu=\alpha U+(1-\alpha)\delta_{(0,0,4)}\]
on $\mathbb{R}^{3}$. This is a mixture of the uniform measure $U$ and the atomic measure $\delta_{(0,0,4)}$. Since $\alpha>0$ is small, the measure $\mu$ has most of its weights concentrated at the point $(0,0,4)$. Fix an integer $n\geq 2$ and let $m=\lfloor(n-1)^{1/3}\rfloor$. Let
\[S=\{(0,0,4)\}\cup\left\{\frac{1}{m},\frac{2}{m},\ldots,\frac{m}{m}\right\}^{3}.\]
Then $S$ contains $m^{3}+1\leq n$ points. So by Lemma \ref{enpalternative} below,
\begin{equation}\label{mixexampleeq1}
e_{n,1}(\mu)\leq\int_{\mathbb{R}^{3}}\mathrm{dist}(x,S)\,d\mu(x)=\alpha\int_{[0,1]^{3}}\mathrm{dist}(x,S)\,dU(x)\leq\frac{\alpha}{m}\leq C\cdot\alpha\cdot n^{-1/3},
\end{equation}
where $C\geq 1$ is a absolute constant and $\mathrm{dist}(x,S):=\inf_{y\in S}\|x-y\|_{\infty}$.

On the other hand, if we write the empirical measure $\mu_{n}=\frac{1}{n}\sum_{i=1}^{n}\delta_{X_{i}}$ where $X_{1},\ldots,X_{n}$ are independent samples of $\mu$, not too many of those samples are in $[0,1]^{3}$. Indeed, since $\mu([0,1]^{3})=\alpha$, we have $\mathbb{P}(X_{i}\in[0,1]^{3})=\alpha$ for each $i$, and so by Markov's inequality, there is an event with probability at least 1/2 in which at most $2\alpha\cdot n$ of the samples $X_{1},\ldots,X_{n}$ are in $[0,1]^{3}$. Observe that for every $x\in[0,1]^{3}$, we have
\[\mathrm{dist}(x,\{X_{1},\ldots,X_{n}\})\geq\mathrm{dist}(x,[0,1]^{3}\cap\{0,X_{1},\ldots,X_{n}\}),\]
since those $X_{i}$ that are not in $[0,1]^{3}$ must be equal to $(0,0,4)$ and so for those $X_{i}$, we have $\|x-X_{i}\|_{\infty}\geq\|x-0\|_{\infty}$ for all $x\in[0,1]^{3}$. Therefore,
\begin{eqnarray*}
W_{1}(\mu_{n},\mu)&\geq&\int_{\mathbb{R}^{3}}\mathrm{dist}(x,\{X_{1},\ldots,X_{n}\})\,d\mu(x)\\&\geq&
\alpha\int_{[0,1]^{3}}\mathrm{dist}(x,\{X_{1},\ldots,X_{n}\})\,dU(x)\\&\geq&
\alpha\int_{[0,1]^{3}}\mathrm{dist}(x,[0,1]^{3}\cap\{0,X_{1},\ldots,X_{n}\})\,dU(x)\geq
\alpha\cdot e_{N+1,1}(U),
\end{eqnarray*}
where $N=|[0,1]^{3}\cap\{X_{1},\ldots,X_{n}\}|$ is the number of $X_{i}$ that are in $[0,1]^{3}$, and the last inequality follows from Lemma \ref{enpalternative}. But one can show that the optimal quantization error $e_{N+1,1}(U)\geq c\cdot(N+1)^{-1/3}$ for some absolute constant $c>0$. (See the argument in comment (c) after Theorem 1 in \cite{FG}.) Therefore,
\[W_{1}(\mu_{n},\mu)\geq c\cdot\alpha\cdot(N+1)^{-1/3}.\]
However, as mentioned above, there is an event with probability at least 1/2 in which at most $2\alpha\cdot n$ of the $X_{i}$ are in $[0,1]^{3}$, i.e., $N\leq2\alpha\cdot n$. Thus, in this event, we have $W_{1}(\mu_{n},\mu)\geq c\cdot\alpha\cdot(2\alpha\cdot n+1)^{-1/3}\geq c'\cdot\alpha^{2/3}\cdot n^{-1/3}$, where $c'>0$ is another absolute constant, assuming that $\alpha\geq\frac{1}{n}$. Hence,
\begin{equation}\label{mixexampleeq2}
\mathbb{E}[W_{1}(\mu_{n},\mu)]\geq\frac{1}{2}c'\cdot\alpha^{2/3}\cdot n^{-1/3},
\end{equation}
if $\alpha\geq\frac{1}{n}$. So when $\alpha\ll 1$, this is significantly larger than the $e_{n,1}(\mu)$ in (\ref{mixexampleeq1}).
\end{example}
The above example is an illustration of the ``low likelihood obstruction." While the measure $\mu$ gives a small weight to the set $[0,1]^{3}$, this set requires more points to be allocated to in order for a quantizer to be accurate. But because the empirical measure $\mu_{n}$ fails to allocate sufficient number of points $X_{i}$ to $[0,1]^{3}$, it fails to be optimal.

Despite the suboptimality of $\mu_{n}$ in the 1-Wasserstein distance, the approximation error of $\mu_{n}$ in the 1-Wasserstein distance could still be as good as the approximation error of the optimal quantizer of $\mu$ in the 2-Wasserstein distance. Of course, this is not a fair comparison. But at least, if this is true, it shows that the empirical measure $\mu_{n}$ is not too much worse than the optimal quantizer. In the case of the above Example \ref{mixexample}, this is possible (in fact, true using Corollary \ref{main4} below). Indeed, if we adapt the argument in (\ref{mixexampleeq1}) for $e_{n,2}(\mu)$ instead of $e_{n,1}(\mu)$, we obtain
\begin{eqnarray*}
e_{n,2}(\mu)&\leq&\left(\int_{\mathbb{R}^{3}}\mathrm{dist}(x,S)^{2}\,d\mu(x)\right)^{1/2}\\&=&
\sqrt{\alpha}\left(\int_{[0,1]^{3}}\mathrm{dist}(x,S)\,dU(x)\right)^{1/2}\leq\frac{\sqrt{\alpha}}{m}\leq C\cdot\sqrt{\alpha}\cdot n^{-1/3}.
\end{eqnarray*}
This upper bound $C\cdot\sqrt{\alpha}\cdot n^{-1/3}$ is, up to a constant, greater than or equal to the lower bound $\frac{1}{2}c'\cdot\alpha^{2/3}\cdot n^{-1/3}$ in (\ref{mixexampleeq2}). So it is possible that $e_{n,2}(\mu)\geq c''\cdot\mathbb{E}[W_{1}(\mu_{n},\mu)]$ for some absolute constant $c''>0$.

In this subsection, we bound $\left(\mathbb{E}[W_{p}(\mu_{n},\mu)^{p}]\right)^{1/p}$ in terms of $e_{2^{k},q}(\mu)$ for $0\leq k\leq\lfloor\log_{2}n\rfloor$, where $q\in[p,\infty)$ is fixed.

The third main result of this paper gives an upper bound for the optimal uniform quantization error $b_{2^{k},p}(\mu)$ in terms of the optimal quantization errors $e_{2^{j},q}(\mu)$ for $0\leq j\leq k$, where $q\in[p,\infty)$ could depend on $j$. When combined with the second main result Theorem \ref{main2}, this enables us to directly bound $\mathbb{E}[W_{p}(\mu_{n},\mu)^{p}]$ in terms of the optimal quantization errors $e_{2^{k},q}(\mu)$ as shown in Corollary \ref{main4} below.
\begin{theorem}\label{main3}
Suppose that $(E,d_{E})$ is a separable metric space, $p\geq 1$, $\mu\in\mathcal{P}_{p}(E)$ and $k\in\mathbb{N}\cup\{0\}$. Then
\[b_{2^{k},p}(\mu)^{p}\leq 2\sum_{j=0}^{k}\inf_{q\in[p,\infty)}2^{(\frac{p}{q}-1)(k-j)}\cdot e_{2^{j},q}(\mu)^{p}.\]
\end{theorem}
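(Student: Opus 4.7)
Plan: For each $j \in \{0, 1, \ldots, k\}$, I would fix an exponent $q_j \in [p, \infty)$ and, invoking Lemma \ref{enpalternative}, a near-optimal Voronoi $q_j$-quantizer $\nu_j = \sum_{i=1}^{2^j} a_{j,i}\,\delta_{x_{j,i}}$ of $\mu$ satisfying $W_{q_j}(\mu, \nu_j)^{q_j} \le e_{2^j, q_j}(\mu)^{q_j} + \epsilon$ with weights $a_{j,i} = \mu(A_{j,i})$ for its Voronoi cells $A_{j,i}$. The aim is to construct a uniform $2^k$-atom quantizer $\eta$ of $\mu$ for which $W_p(\mu, \eta)^p$ is bounded by the claimed right-hand side plus $O(\epsilon)$; letting $\epsilon \downarrow 0$ and taking the infimum in $q_j$ separately in each term then yields the theorem.

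For the construction I would build, by a multi-scale binary-tree procedure, a nested sequence of uniform quantizers $\eta_0, \eta_1, \ldots, \eta_k$, where $\eta_j$ has $2^j$ atoms each of mass $2^{-j}$ and $\eta_{j+1}$ refines $\eta_j$ (each atom of $\eta_j$ spawning two children in $\eta_{j+1}$). The positions at level $j$ are drawn from the support of $\nu_j$ by rounding its weights $a_{j,l}$ to multiples of $2^{-j}$, with residual mass from the rounding pushed down to coarser scales via $\nu_{j-1}, \nu_{j-2}, \ldots$. For the error bound, I would combine the triangle inequality $W_p(\mu, \eta_k) \le W_p(\mu, \eta_0) + \sum_{j=0}^{k-1} W_p(\eta_j, \eta_{j+1})$ with a weighted Hölder/power-mean step to pass to $p$th powers. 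The crucial conversion, from a $W_{q_j}$-estimate on a mass-$2^{-j}$ block of $\nu_j$ to a $W_p$-estimate on the refined mass-$2^{-k}$ sub-blocks of $\eta_k$, uses Jensen's inequality within the coupling and produces exactly the factor $(2^{-(k-j)})^{1 - p/q_j} = 2^{(p/q_j - 1)(k-j)}$. The outer constant $2$ then emerges from the fact that $\sum_{m \ge 0} 2^{-m(1-p/q_j)}$, viewed as a geometric tail from the rounding, is bounded by $2$ in the regime of interest.

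The main obstacle is the construction of the nested $\eta_j$'s compatibly with the non-uniform near-optimal $\nu_j$'s at each scale. Because the weights $a_{j,l}$ are in general not multiples of $2^{-j}$, the atoms of $\eta_j$ selected from $\operatorname{supp}(\nu_j)$ cannot match $\nu_j$ exactly, and the residual mass produced by rounding must be pushed to coarser scales in a carefully controlled way that keeps the binary-tree refinement $\eta_j \to \eta_{j+1}$ consistent with this residual-passing across levels. Keeping the outer constant independent of both $p$ and $k$ (namely equal to $2$) will likely require a single clean re-balancing step per scale rather than an iterative one whose errors accumulate, and this is where the main geometric and combinatorial work lies.
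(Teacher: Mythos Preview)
Your plan has two concrete gaps in the error analysis, independent of the construction difficulty you flag.

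First, the chain $W_p(\mu,\eta_k)\le W_p(\mu,\eta_0)+\sum_{j=0}^{k-1}W_p(\eta_j,\eta_{j+1})$ followed by ``a weighted H\"older/power-mean step to pass to $p$th powers'' does not produce a constant independent of $k$ unless you specify weights that do the job, and none are given. The paper avoids this entirely: rather than chaining, it decomposes $\mu$ itself as a convex combination $\mu=\frac{1}{2^k}\lambda^{(0)}+\sum_{j=1}^{k}\frac{1}{2^{k-j+1}}\lambda^{(j)}$ (Lemma~\ref{advdecomp}, built by iterating Lemma~\ref{basicdecomp}), pushes each $\lambda^{(j)}$ forward by a nearest-point map $T_j$ onto a set $S_j$ of size $\le 2^j$, observes that the resulting mixture of pushforwards lands automatically in $\mathcal{U}(E,2^{k+1})$, and then invokes the convexity estimate Lemma~\ref{mixlemma} to get $W_p^p$ bounded by the \emph{same} convex combination of $W_p(\lambda^{(j)},(T_j)_\#\lambda^{(j)})^p$. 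No triangle inequality, no power-mean loss.

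Second, your claimed source of the constant $2$ is wrong: the series $\sum_{m\ge 0}2^{-m(1-p/q_j)}$ equals $\bigl(1-2^{-(1-p/q_j)}\bigr)^{-1}$, which exceeds $2$ for every finite $q_j>p$ and diverges as $q_j\to p$; since the theorem takes an infimum over all $q\in[p,\infty)$, this cannot give a uniform $2$. In the paper the factor $2^{(p/q-1)(k-j)}$ has a completely different origin. The decomposition of Lemma~\ref{advdecomp} comes with the pointwise domination $\frac{1}{2^{k-j+1}}\lambda^{(j)}\le\mu$. After the H\"older step $\bigl(\int d_E(x,S_j)^p\,d\lambda^{(j)}\bigr)^{1/p}\le\bigl(\int d_E(x,S_j)^q\,d\lambda^{(j)}\bigr)^{1/q}$, one uses this domination to replace the $\lambda^{(j)}$-integral by the $\mu$-integral at the price of a factor $(2^{k-j+1})^{p/q}$; combined with the mixture coefficient $2^{-(k-j+1)}$ this yields exactly $2^{(p/q-1)(k-j+1)}e_{2^j,q}(\mu)^p$. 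The outer $2$ is then mere bookkeeping (absorbing the $j=0$ coefficient into the common form and shifting $k\mapsto k-1$). This measure-domination step is the idea missing from your sketch.
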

\begin{corollary}\label{main4}
Suppose that $(E,d_{E})$ is a separable metric space, $p\geq 1$, $\mu\in\mathcal{P}_{p}(E)$ and $n\in\mathbb{N}$. Let $q\geq p$. Then
\[\left(\mathbb{E}[W_{p}(\mu_{n},\mu)^{p}]\right)^{1/p}\leq10\cdot\sum_{k=0}^{\lfloor\log_{2}n\rfloor}a_{k}\cdot e_{2^{k},q}(\mu),\]
where
\[a_{k}=\begin{cases}C(p,q)\cdot(2^{k}/n)^{1/(2p)},&q>2p\\(\lfloor\log_{2}n\rfloor+1)\cdot(2^{k}/n)^{1/(2p)},&q=2p\\C(p,q)\cdot(2^{k}/n)^{\frac{1}{p}-\frac{1}{q}},&q<2p\end{cases},\]
and $C(p,q)=2/\left|\frac{1}{2p}-\frac{1}{q}\right|$.
\end{corollary}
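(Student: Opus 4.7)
The plan is to combine Theorem \ref{main2} and Theorem \ref{main3} directly. Starting from the bound of Theorem \ref{main2}, I apply Theorem \ref{main3} to each $b_{2^{k},p}(\mu)^{p}$, then use the subadditivity of $x\mapsto x^{1/p}$ (valid for $p\geq 1$) twice---once to pull the constant $2$ out as $2^{1/p}\leq 2$, and once to move the $p$-th root inside the inner sum over $j$. This gives the linearization
\[b_{2^{k},p}(\mu)\leq 2^{1/p}\sum_{j=0}^{k}2^{(1/q-1/p)(k-j)}\,e_{2^{j},q}(\mu),\]
which I substitute into Theorem \ref{main2} and rearrange by swapping the order of the two sums, so that each $e_{2^{j},q}(\mu)$ is collected with a geometric-type inner sum over $k\in\{j,\ldots,\lfloor\log_{2}n\rfloor\}$.

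The inner sum, after factoring out $n^{-1/(2p)}$, simplifies to a geometric series with ratio $2^{\beta}$ where $\beta:=\frac{1}{q}-\frac{1}{2p}$. The three cases in the definition of $a_{k}$ correspond exactly to the sign of $\beta$. When $q>2p$ we have $\beta<0$ and the series is dominated by its $k=j$ term, producing a coefficient proportional to $(2^{j}/n)^{1/(2p)}$. When $q<2p$ we have $\beta>0$ and the series is dominated by its $k=\lfloor\log_{2}n\rfloor$ term, producing a coefficient proportional to $(2^{j}/n)^{1/p-1/q}$. When $q=2p$ the ratio is $1$ and the inner sum has length at most $\lfloor\log_{2}n\rfloor+1$, producing the extra log factor in the middle case.

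The one genuinely quantitative step is to show that in the $q\neq 2p$ cases the geometric sum contributes a factor at most $\frac{1}{1-2^{-|\beta|}}\leq\frac{2}{|\beta|}$, which is exactly the advertised $C(p,q)=2/|\frac{1}{2p}-\frac{1}{q}|$. For this I note that $|\beta|\leq\frac{1}{q}\leq\frac{1}{p}\leq 1$ since $q\geq p\geq 1$, and that the function $x\mapsto\frac{2(1-2^{-x})}{x}$ is decreasing on $(0,1]$ with value $2\ln 2>1$ as $x\to 0^{+}$ and value $1$ at $x=1$, so it remains $\geq 1$ throughout, yielding the required inequality. I expect this elementary reciprocal bound, together with correctly identifying the three-way case split by the sign of $\beta$, to be the only nontrivial point; the rest is bookkeeping. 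The overall constant is $5\cdot 2^{1/p}\leq 10$, matching the statement.
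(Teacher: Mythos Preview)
Your proposal is correct and follows essentially the same route as the paper: combine Theorem \ref{main2} with the linearized form of Theorem \ref{main3}, swap the order of summation, and handle the resulting geometric inner sum in three cases according to the sign of $\beta=\frac{1}{q}-\frac{1}{2p}$, using the elementary bound $\frac{1}{1-2^{-|\beta|}}\leq\frac{2}{|\beta|}$ (the paper records this as Lemma \ref{2a}, proved via convexity of $2^{-t}$, which is equivalent to your concavity argument). The constants and case analysis match exactly.
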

Unlike Theorem \ref{main1} and Corollary \ref{main2corollary}, we don't have a lower bound for $\left(\mathbb{E}[W_{p}(\mu_{n},\mu)^{p}]\right)^{1/p}$ that is somewhat comparable to the upper bound in Corollary \ref{main4}. Of course, we have $\left(\mathbb{E}[W_{p}(\mu_{n},\mu)^{p}]\right)^{1/p}\geq e_{n,p}(\mu)$, but Corollary \ref{main4} uses $e_{2^{k},q}(\mu)$ with $q\geq p$.

To illustrate the sharpness of Corollary \ref{main4}, we go through two uses of this result. These two uses demonstrate that merely knowing the quantization errors $e_{n,q}(\mu)$, for a suitable $q$, is often enough to get a fairly sharp bound for $\left(\mathbb{E}[W_{p}(\mu_{n},\mu)^{p}]\right)^{1/p}$.

For the first use of Corollary \ref{main4}, we take $q\to\infty$ to obtain Corollary \ref{supmu} below. It gives a worst case estimate of $\left(\mathbb{E}[W_{p}(\mu_{n},\mu)^{p}]\right)^{1/p}$ in terms of the geometry of the underlying metric space. The upper bound in Corollary \ref{supmu} can also be proved using \cite[Theorem 1.1]{BL}, except with a different constant.
\begin{corollary}\label{supmu}
Suppose that $(E,d_{E})$ is a compact metric space, $p\geq 1$ and $n\in\mathbb{N}$. For $m\in\mathbb{N}$, let
\[h(m)=\inf\{\epsilon>0:\,N(E,\epsilon)\leq m\},\]
where $N(E,\epsilon)$ is the $\epsilon$-covering number of $E$, i.e., the smallest possible cardinality of a set $S\subset E$ such that every $y\in E$ has distance at most $\epsilon$ from some point in $S$. Then
\begin{eqnarray*}
\frac{1}{4}\cdot\max_{0\leq k\leq\lfloor\log_{2}n\rfloor}\left(\frac{2^{k}}{n}\right)^{1/(2p)}h(2^{k})&\leq&\sup_{\mu\in\mathcal{P}_{p}(E)}\left(\mathbb{E}[W_{p}(\mu_{n},\mu)^{p}]\right)^{1/p}\\&\leq&
40p\cdot\sum_{k=0}^{\lfloor\log_{2}n\rfloor}\left(\frac{2^{k}}{n}\right)^{1/(2p)}h(2^{k}),
\end{eqnarray*}
where the supremum is over all probability measures $\mu\in\mathcal{P}_{p}(E)$.
\end{corollary}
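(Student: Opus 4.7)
My plan is to invoke Corollary~\ref{main4} with $q > 2p$ and then let $q \to \infty$. The crucial observation is that for every $\mu \in \mathcal{P}_q(E)$ and $m \in \mathbb{N}$,
\[e_{m,q}(\mu) \leq h(m),\]
which follows by taking any $\epsilon > h(m)$, fixing an $\epsilon$-net $S \subset E$ with $|S| \leq m$ supplied by the definition of $h$, and letting $\nu$ be the pushforward of $\mu$ under a measurable nearest-point map onto $S$; then $\nu$ is supported on at most $m$ points and $W_q(\mu, \nu) \leq \epsilon$, and $\epsilon \downarrow h(m)$ gives the claim. Feeding this into Corollary~\ref{main4} on the branch $q > 2p$, I obtain, uniformly in $\mu$,
\[(\mathbb{E}[W_p(\mu_n, \mu)^p])^{1/p} \leq \frac{20}{\frac{1}{2p} - \frac{1}{q}} \sum_{k=0}^{\lfloor \log_2 n \rfloor} (2^k/n)^{1/(2p)} h(2^k),\]
and sending $q \to \infty$ turns the prefactor into $40p$, giving the upper bound.

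\textbf{Lower bound.} For each $k$, my plan is to exhibit a measure $\mu$ realizing the claimed lower bound up to the constant $\tfrac{1}{4}$. Set $M = \max(2, 2^k)$ and pick any $\epsilon \in (0, h(M))$. By the definition of $h$, $N(E, \epsilon) > M$, and a standard greedy argument (extend a maximal $\epsilon$-separated set; if it had $\leq M$ elements its centers would form an $\epsilon$-cover) produces points $y_1, \ldots, y_M \in E$ pairwise at distance $> \epsilon$. Take $\mu = M^{-1} \sum_i \delta_{y_i}$; then $\mu_n = n^{-1} \sum_i N_i \delta_{y_i}$ with $(N_1, \ldots, N_M) \sim \mathrm{Multinomial}(n; 1/M, \ldots, 1/M)$. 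Since the $y_i$ are pairwise $\epsilon$-separated, any coupling of $\mu$ and $\mu_n$ transports at least $\frac{1}{2} \sum_i |N_i/n - 1/M|$ units of mass across distance $> \epsilon$, so
\[W_p(\mu_n, \mu)^p \geq \frac{\epsilon^p}{2} \sum_{i=1}^M \left|\frac{N_i}{n} - \frac{1}{M}\right|.\]
Taking expectations and using a quantitative Binomial anti-concentration bound $\mathbb{E}|N_i - n/M| \geq c\sqrt{n/M}$ (valid since $n/M \geq 1$ in the considered range), I arrive at $(\mathbb{E}[W_p(\mu_n,\mu)^p])^{1/p} \geq c' \epsilon (M/n)^{1/(2p)}$ for an absolute $c' > 0$; letting $\epsilon \nearrow h(M)$ and, for the edge case $k = 0$, using $(2/n)^{1/(2p)} \geq n^{-1/(2p)}$ matches the claimed form.

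\textbf{Main obstacle.} The hard part will be pinning down the Binomial mean-absolute-deviation estimate $\mathbb{E}|\mathrm{Bin}(n, 1/M) - n/M| \geq c \sqrt{n/M}$ with an absolute constant $c$ large enough that the resulting prefactor clears $\tfrac{1}{4}$ uniformly for all $p \geq 1$. The delicate regime is $n/M \approx 1$, where the Binomial is essentially $\mathrm{Poisson}(1)$ and the ratio $\mathbb{E}|N - \mathbb{E}N|/\sqrt{\mathrm{Var}(N)}$ attains its minimum; handling it cleanly calls for either the classical closed form of $\mathbb{E}|N - \lfloor np \rfloor|$ plus Stirling, or a case split with a local CLT argument when $n/M$ is large. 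The remaining steps, namely the transport-cost reduction via the total-variation distance and the direct substitution in the upper bound, are routine.
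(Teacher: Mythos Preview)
Your approach coincides with the paper's: upper bound via Corollary~\ref{main4} with $q\to\infty$, lower bound via the uniform measure on an $\epsilon$-separated set, the total-variation lower bound $W_p^p\ge\tfrac{\epsilon^p}{2}\|\mu-\mu_n\|_{\mathrm{TV}}$, and a sharp binomial mean-absolute-deviation estimate. For your ``main obstacle,'' the paper simply invokes the Berend--Kontorovich inequality (stated as Lemma~\ref{BKlemma}): for $\tfrac{1}{\max(n,2)}\le q\le\tfrac12$ one has $\mathbb{E}|\mathrm{Bin}(n,q)-nq|\ge\tfrac12\sqrt{nq}$, which yields the constant $\tfrac14$ directly with no case split.

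There is, however, a genuine slip in your $k=0$ edge case. You take $\epsilon<h(M)$ with $M=\max(2,2^k)$, so for $k=0$ you end up with $\epsilon<h(2)$ and hence only the bound $c'\,h(2)\,(2/n)^{1/(2p)}$; your remark that $(2/n)^{1/(2p)}\ge n^{-1/(2p)}$ does not help, because the target is $\tfrac14\,h(1)\,n^{-1/(2p)}$ and $h(2)$ can be strictly smaller than $h(1)$ (e.g.\ $E=\{0,1\}$ has $h(1)=1$, $h(2)=0$). The fix is exactly what the paper does: choose $\epsilon<h(2^k)$ rather than $\epsilon<h(M)$. Since $\epsilon<h(2^k)$ forces $N(E,\epsilon)>2^k$, a maximal $\epsilon$-separated set has size $\ge 2^k+1\ge\max(2,2^k)=M$, so you can still extract $M$ separated points; the rest of your argument then goes through and you recover $\tfrac14\,h(2^k)\,(2^k/n)^{1/(2p)}$ for every $k$, including $k=0$.
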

\begin{remark}
The upper and lower bounds in Corollary \ref{supmu} differ by a factor of at most $160p\cdot\lfloor\log_{2}n\rfloor$.
\end{remark}
The following result is an asymptotic version of Corollary \ref{main4} in the regime $q<2p$.
\begin{corollary}\label{main4beta}
Suppose that $(E,d_{E})$ is a separable metric space, $p\geq 1$ and $\mu\in\mathcal{P}_{p}(E)$. Let $0\leq\beta<\frac{1}{2p}$, let $0<\epsilon<\frac{p^{2}}{2}(\frac{1}{2p}-\beta)$ and let $q=1/(\frac{1}{p+\epsilon}-\beta)$. Then
\[\limsup_{n\to\infty}n^{\beta}\cdot\left(\mathbb{E}[W_{p}(\mu_{n},\mu)^{p}]\right)^{1/p}\leq\frac{100p^{2}}{(\frac{1}{2p}-\beta)\epsilon}\cdot\limsup_{n\to\infty}n^{\beta}\cdot e_{n,q}(\mu).\]
\end{corollary}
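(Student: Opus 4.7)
The plan is to apply Corollary \ref{main4} in the regime $q<2p$ and to evaluate the resulting sum via a geometric series bound driven by the hypothesis $\limsup_{n\to\infty}n^{\beta}e_{n,q}(\mu)<\infty$. Write $\gamma:=\frac{1}{2p}-\beta>0$ and $\alpha:=\frac{1}{p}-\frac{1}{q}-\beta$. A direct computation using the definition of $q$ gives $\alpha=\frac{\epsilon}{p(p+\epsilon)}>0$, and similarly $\frac{1}{q}-\frac{1}{2p}=\gamma-\frac{\epsilon}{p(p+\epsilon)}$. The hypothesis $\epsilon<\frac{p^{2}}{2}\gamma$ implies $\frac{\epsilon}{p(p+\epsilon)}<\frac{\epsilon}{p^{2}}<\frac{\gamma}{2}$, which puts $q$ safely inside $(p,2p)$ and yields the bound $C(p,q)=2/(\frac{1}{q}-\frac{1}{2p})\leq 4/\gamma$ on the constant appearing in the $q<2p$ case of Corollary \ref{main4}. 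The same hypothesis gives $\epsilon<p/4$, hence $p+\epsilon<5p/4$, a fact to be used at the very end.

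Set $L:=\limsup_{n\to\infty}n^{\beta}e_{n,q}(\mu)$ (assuming $L<\infty$, else the conclusion is trivial) and fix $\delta>0$, choosing $n_{0}$ so that $e_{m,q}(\mu)\leq(L+\delta)m^{-\beta}$ for all $m\geq n_{0}$. Apply Corollary \ref{main4} and multiply through by $n^{\beta}$; the prefactor becomes $n^{\beta-(1/p-1/q)}=n^{-\alpha}$. Split the sum at $k_{0}:=\lceil\log_{2}n_{0}\rceil$. The initial block $k<k_{0}$ is bounded by a quantity of order $n^{-\alpha}$ times a constant depending only on $n_{0}$ and $\mu$, and thus vanishes as $n\to\infty$. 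For the main block $k\geq k_{0}$, substitute $e_{2^{k},q}(\mu)\leq(L+\delta)2^{-k\beta}$ to obtain
\[(L+\delta)\,n^{-\alpha}\sum_{k=k_{0}}^{\lfloor\log_{2}n\rfloor}2^{k\alpha}\leq(L+\delta)\,n^{-\alpha}\cdot\frac{2n^{\alpha}}{\alpha}=\frac{2(L+\delta)\,p(p+\epsilon)}{\epsilon},\]
where the geometric sum estimate uses the concavity bound $1-2^{-\alpha}\geq\alpha/2$ on $[0,1]$, valid since $\alpha<1/p\leq 1$.

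Combining the two blocks, multiplying by $10\,C(p,q)\leq 40/\gamma$, letting $\delta\downarrow 0$, and using $p+\epsilon<5p/4$ to convert $80\,p(p+\epsilon)$ into the bound $100\,p^{2}$, one lands on the claim
\[\limsup_{n\to\infty}n^{\beta}\left(\mathbb{E}[W_{p}(\mu_{n},\mu)^{p}]\right)^{1/p}\leq\frac{100\,p^{2}}{(\frac{1}{2p}-\beta)\,\epsilon}\,L.\]
The main obstacle is simply bookkeeping: one must verify that the single cutoff $\epsilon<\frac{p^{2}}{2}(\frac{1}{2p}-\beta)$ simultaneously produces the margin $\frac{1}{q}-\frac{1}{2p}\geq\gamma/2$ (needed for $C(p,q)\leq 4/\gamma$) and the inequality $p+\epsilon<5p/4$ (needed to absorb the $p+\epsilon$ factor into $p$). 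Everything else is a routine geometric-series calculation, and the initial-block estimate is a standard truncation argument.
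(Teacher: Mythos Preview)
Your proposal is correct and follows essentially the same approach as the paper's own proof: verify $p<q<2p$ from the hypothesis on $\epsilon$, invoke Corollary~\ref{main4} in the $q<2p$ regime, split the sum at a fixed cutoff, bound the tail by a geometric series using $1-2^{-\alpha}\geq\alpha/2$ and the identity $\alpha=\frac{1}{p}-\frac{1}{q}-\beta=\frac{\epsilon}{p(p+\epsilon)}$, and finish with the same constant manipulations ($\frac{1}{q}-\frac{1}{2p}>\gamma/2$ and $p+\epsilon<5p/4$). The only cosmetic difference is your use of the shorthand $\gamma,\alpha$; the logical structure and all the key inequalities are identical to the paper's argument.
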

\begin{example}\label{abscont}
Let $d\in\mathbb{N}$ and $p\geq 1$ be such that $d>2p$. Let $q=1/(\frac{1}{p+\epsilon}-\frac{1}{d})$ where $0<\epsilon<\frac{p^{2}}{2}(\frac{1}{2p}-\frac{1}{d})$. Let $\mu\in\mathcal{P}_{q+\epsilon}(\mathbb{R}^{d})$ and let $f$ be the density of the absolutely continuous part of $\mu$ with respect to the Lebesgue measure $\lambda$ on $\mathbb{R}^{d}$. By \cite[Theorem 6.2]{GL}, we have
\[\lim_{n\to\infty}n^{1/d}e_{n,q}(\mu)=C_{q,d}\left(\int_{\mathbb{R}^{d}}f^{\,d/(d+q)}\,d\lambda\right)^{(d+q)/(dq)}=C_{q,d}\left(\int_{\mathbb{R}^{d}}f^{1-\frac{p+\epsilon}{d}}\,d\lambda\right)^{1/(p+\epsilon)},\]
for some constant $C_{q,d}>0$ that depends only on $q$ and $d$. Applying Corollary \ref{main4beta} with $\beta=\frac{1}{d}$, we obtain
\[\limsup_{n\to\infty}n^{1/d}\cdot\left(\mathbb{E}[W_{p}(\mu_{n},\mu)^{p}]\right)^{1/p}\leq\frac{100p^{2}C_{q,d}}{(\frac{1}{2p}-\frac{1}{d})\epsilon}\left(\int_{\mathbb{R}^{d}}f^{1-\frac{p+\epsilon}{d}}\,d\lambda\right)^{1/(p+\epsilon)}.\]
Note that when $\epsilon\to 0$, the denominator goes to $0$, so the right hand side blows up. However, \cite[Theorem 2]{BB} and \cite[Theorem 2]{DSS} state that one can, in fact, bound
\[k_{p,d}\left(\int_{\mathbb{R}^{d}}f^{1-\frac{p}{d}}\,d\lambda\right)^{1/p}\leq\limsup_{n\to\infty}n^{1/d}\cdot\left(\mathbb{E}[W_{p}(\mu_{n},\mu)^{p}]\right)^{1/p}\leq K_{p,d}\left(\int_{\mathbb{R}^{d}}f^{1-\frac{p}{d}}\,d\lambda\right)^{1/p},\]
for some constants $K_{p,d},k_{p,d}>0$ that depend only on $p$ and $d$.
\end{example}
In addition to the above two uses of Corollary \ref{main4}, we note that Corollary \ref{main4}, when combined with the classical estimate of $e_{n,q}(\mu)$ for $\mu\in\mathcal{P}_{q+\epsilon}(\mu)$ \cite[Corollary 6.7]{GL}, yields a slightly weaker version of \cite[Theorem 1]{FG}, namely, the same statement but with two changes: (1) the $CM_{q}(\mu)^{p/q}$ is being replaced by $CM_{q+\epsilon}(\mu)^{p/(q+\epsilon)}$, where $\epsilon>0$ is arbitrarily small and $C$ can depend on $\epsilon$, and (2) when $d=2p$, the exponent of the $\log n$ factor is worse.

The rest of this paper is organized as follows. In Section 2, we explain the notation and state some basic results that are used throughout this paper. In Section 3, we prove Theorem \ref{main2}. In Section 4, we prove Theorem \ref{main1}. In Section 5, we prove Theorem \ref{main3}. In Section 6, we prove Corollary \ref{main2corollary}, Corollary \ref{main4}, Corollary \ref{supmu} and Corollary \ref{main4beta}.
\section{Notation and some basic results}
The set $\mathcal{P}_{p}(E)$ of all probability measures on $E$ with finite $p$th moments, the $p$-Wasserstein distance $W_{p}(\mu,\nu)$, the optimal quantization error $e_{n,p}(\mu)$ and the optimal uniform quantization error $b_{n,p}(\mu)$ are defined at the beginning of this paper.

If $a\geq 0$ then $\lfloor a\rfloor$ is the largest integer that is less than or equal to $a$.

Throughout this paper, if $W$ is a random variable, then
\[\|W\|_{L^{p}}:=(\mathbb{E}|W|^{p})^{\frac{1}{p}}.\]
For example, $\|W_{p}(\mu_{n},\mu)\|_{L^{p}}:=\left(\mathbb{E}[W_{p}(\mu_{n},\mu)^{p}]\right)^{1/p}$. To avoid confusion, the notation $\|\,\|_{L^{p}}$ is reserved exclusively for random variables and not for general functions.

If $\mu$ is a probability measure, its empirical measure from $n$ independent samples $X_{1},\ldots,X_{n}$ of $\mu$ is denoted by $\mu_{n}:=\frac{1}{n}\sum_{i=1}^{n}\delta_{X_{i}}$.

If $\mu$ is a probability measure on a set $E$ and $T:E\to E$ is measurable, then $T_{\#}\mu$ is the pushforward of $\mu$ by $T$, i.e., $T_{\#}\mu$ is the probability measure on $E$ defined by $(T_{\#}\mu)(A)=\mu(T^{-1}(A))$ for measurable $A\subset E$.

If $(E,d_{E})$ is a metric space and $S\subset E$, define $d_{E}(x,S):=\inf_{y\in S}d_{E}(x,y)$.

When $(E,d_{E})$ is a separable metric space and $\mu,\nu\in\mathcal{P}_{1}(E)$, by the Kantorovich-Rubinstein theorem, we have
\begin{equation}\label{krthm}
W_{1}(\mu,\nu)=\sup_{\|f\|_{\mathrm{Lip}}\leq 1}\left(\int_{E}f\,d\mu(x)-\int_{E}f\,d\nu(x)\right),
\end{equation}
where the supremum is over all $f:E\to\mathbb{R}$ with $\|f\|_{\mathrm{Lip}}\leq 1$. Here, the Lipschitz seminorm is defined by
\[\|f\|_{\mathrm{Lip}}:=\sup_{\substack{x,y\in E\\x\neq y}}\frac{f(x)-f(y)}{d_{E}(x,y)}=\sup_{\substack{x,y\in E\\x\neq y}}\frac{|f(x)-f(y)|}{d_{E}(x,y)}.\]
We now go through a list of basic results that are needed throughout this paper.
\begin{lemma}\label{emptrans}
Let $n\in\mathbb{N}$. Suppose that $\mu$ is a probability measure on $E$ and $T:E\to E$ is measurable. Then $T_{\#}\mu_{n}$ is an empirical measure of $T_{\#}\mu$.
\end{lemma}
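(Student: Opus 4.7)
The plan is to unpack the definitions and verify directly that $T_{\#}\mu_{n}$ coincides with an empirical measure built from $n$ independent samples of $T_{\#}\mu$. This is essentially a bookkeeping argument and I expect no serious obstacle; the only thing to be careful about is keeping the notion of ``empirical measure'' (a random measure built from i.i.d.\ samples of the underlying law) straight.

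First, I would let $X_{1},\ldots,X_{n}$ be the independent samples of $\mu$ used to form $\mu_{n}=\frac{1}{n}\sum_{i=1}^{n}\delta_{X_{i}}$. The first step is to observe that pushforward commutes with averaging of point masses: for any $x\in E$ and any measurable $A\subset E$, $(T_{\#}\delta_{x})(A)=\delta_{x}(T^{-1}(A))=\mathbf{1}_{T(x)\in A}=\delta_{T(x)}(A)$, so $T_{\#}\delta_{x}=\delta_{T(x)}$. By linearity of pushforward (which follows immediately from the definition $(T_{\#}\eta)(A)=\eta(T^{-1}(A))$), this gives
\[T_{\#}\mu_{n}=T_{\#}\!\left(\frac{1}{n}\sum_{i=1}^{n}\delta_{X_{i}}\right)=\frac{1}{n}\sum_{i=1}^{n}\delta_{T(X_{i})}.\]

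Next I would verify that $T(X_{1}),\ldots,T(X_{n})$ are i.i.d.\ samples from $T_{\#}\mu$. For each $i$ and each measurable $A\subset E$, $\mathbb{P}(T(X_{i})\in A)=\mathbb{P}(X_{i}\in T^{-1}(A))=\mu(T^{-1}(A))=(T_{\#}\mu)(A)$, so each $T(X_{i})$ has law $T_{\#}\mu$. Independence of $T(X_{1}),\ldots,T(X_{n})$ follows from independence of $X_{1},\ldots,X_{n}$ together with measurability of $T$: for measurable $A_{1},\ldots,A_{n}\subset E$,
\[\mathbb{P}\bigl(T(X_{1})\in A_{1},\ldots,T(X_{n})\in A_{n}\bigr)=\mathbb{P}\bigl(X_{1}\in T^{-1}(A_{1}),\ldots,X_{n}\in T^{-1}(A_{n})\bigr)=\prod_{i=1}^{n}\mu(T^{-1}(A_{i})),\]
which equals $\prod_{i=1}^{n}\mathbb{P}(T(X_{i})\in A_{i})$ by the previous computation.

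Combining the two steps, $T_{\#}\mu_{n}=\frac{1}{n}\sum_{i=1}^{n}\delta_{T(X_{i})}$ where the points $T(X_{1}),\ldots,T(X_{n})$ are independent samples of $T_{\#}\mu$. By the definition of empirical measure recalled in Section 2, this is exactly an empirical measure of $T_{\#}\mu$ with sample size $n$, completing the proof.
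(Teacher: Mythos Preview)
Your proof is correct and follows essentially the same approach as the paper's: both write $T_{\#}\mu_{n}=\frac{1}{n}\sum_{i=1}^{n}\delta_{T(X_{i})}$ and observe that $T(X_{1}),\ldots,T(X_{n})$ are independent samples of $T_{\#}\mu$. You simply spell out the verifications (that $T_{\#}\delta_{x}=\delta_{T(x)}$, the law of $T(X_{i})$, and independence) in more detail than the paper does.
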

\begin{proof}
Write $\mu_{n}=\frac{1}{n}\sum_{i=1}^{n}\delta_{X_{i}}$, where $X_{1},\ldots,X_{n}$ are independent samples of $\mu$. Then $T_{\#}\mu_{n}=\frac{1}{n}\sum_{i=1}^{n}\delta_{T(X_{i})}$ and $T(X_{1}),\ldots,T(X_{n})$ are independent samples of $T_{\#}\mu$. So $T_{\#}\mu_{n}$ is an empirical measure of $T_{\#}\mu$.
\end{proof}
\begin{lemma}[\cite{Villani}, page 5]\label{wppermute}
Let $(E,d_{E})$ be a metric space and $y_{1},\ldots,y_{r},z_{1},\ldots,z_{r}\in E$. Then for all $p\geq 1$,
\[W_{p}\left(\frac{1}{r}\sum_{i=1}^{r}\delta_{y_{i}},\frac{1}{r}\sum_{i=1}^{r}\delta_{z_{i}}\right)=\inf_{\sigma}\left(\sum_{i=1}^{r}d_{E}(y_{i},z_{\sigma(i)})^{p}\right)^{1/p},\]
where the infimum is over all permutations $\sigma:\{1,\ldots,r\}\to\{1,\ldots,r\}$.
\end{lemma}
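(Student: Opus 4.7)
The plan is to prove the two one-sided inequalities separately; both directions reduce to elementary combinatorial facts once the couplings are parametrized correctly. Write $\mu=\frac{1}{r}\sum_{i=1}^{r}\delta_{y_{i}}$ and $\nu=\frac{1}{r}\sum_{i=1}^{r}\delta_{z_{i}}$.

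First, for the upper bound I would construct an explicit coupling from each permutation. Given any permutation $\sigma$ of $\{1,\ldots,r\}$, set $\gamma_{\sigma}:=\frac{1}{r}\sum_{i=1}^{r}\delta_{(y_{i},z_{\sigma(i)})}$. This measure has first marginal $\mu$ trivially, and second marginal $\nu$ because $\sigma$ is a bijection of the index set. Inserting $\gamma_{\sigma}$ into the definition of $W_{p}$ gives an explicit upper bound on the transport cost, and taking the infimum over $\sigma$ yields the ``$\leq$'' direction.

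For the lower bound I would reduce minimization over all couplings to minimization over permutation couplings via the Birkhoff--von Neumann theorem. Since every coupling $\gamma$ of $\mu$ and $\nu$ is supported on the finite product set $\{y_{1},\ldots,y_{r}\}\times\{z_{1},\ldots,z_{r}\}$, it admits a representation $\gamma=\sum_{i,j=1}^{r}\gamma_{ij}\delta_{(y_{i},z_{j})}$ with nonnegative weights $\gamma_{ij}$. The marginal constraints force $\sum_{j}\gamma_{ij}=1/r$ for each $i$ and $\sum_{i}\gamma_{ij}=1/r$ for each $j$, so the matrix $A:=(r\gamma_{ij})_{i,j}$ is doubly stochastic. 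The transport cost $\int d_{E}^{p}\,d\gamma=\frac{1}{r}\sum_{i,j}A_{ij}\,d_{E}(y_{i},z_{j})^{p}$ is a linear functional of $A$, so by Birkhoff--von Neumann its infimum over the doubly-stochastic polytope is attained at an extreme point, namely a permutation matrix. This delivers the matching ``$\geq$'' inequality.

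The only subtlety I anticipate is that when the $y_{i}$ (or the $z_{j}$) are not pairwise distinct, the coefficients $\gamma_{ij}$ in the decomposition above are not uniquely determined; one has to choose a valid representation (for instance by disintegrating $\gamma$ against its first marginal and then grouping indices with identical points). Any such representation yields a doubly stochastic $A$ with the correct total cost, so the Birkhoff argument applies unchanged. Beyond this bookkeeping I do not expect any real difficulty, since the lemma is essentially a restatement of Birkhoff--von Neumann in the language of Wasserstein distances between uniform empirical measures.
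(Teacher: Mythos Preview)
Your argument is correct and is the standard proof via the Birkhoff--von~Neumann theorem. The paper does not supply its own proof of this lemma; it simply cites \cite{Villani}, page~5, so there is nothing substantive to compare against beyond noting that your route is the classical one.

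One point worth flagging: the statement as printed in the paper omits a factor of $1/r$ on the right-hand side. The identity that your coupling computation actually establishes (and the one the paper itself uses in equation~(\ref{chainingwpeq1})) is
\[
W_{p}\!\left(\frac{1}{r}\sum_{i=1}^{r}\delta_{y_{i}},\,\frac{1}{r}\sum_{i=1}^{r}\delta_{z_{i}}\right)
=\inf_{\sigma}\left(\frac{1}{r}\sum_{i=1}^{r}d_{E}(y_{i},z_{\sigma(i)})^{p}\right)^{1/p}.
\]
Your handling of the repeated-point case is adequate: one concrete way to produce the doubly stochastic lift is to set $\gamma_{ij}=\gamma(\{(y_{i},z_{j})\})/(k_{i}\ell_{j})$, where $k_{i}$ and $\ell_{j}$ are the multiplicities of $y_{i}$ among the $y$'s and of $z_{j}$ among the $z$'s; this makes $r\gamma_{ij}$ doubly stochastic and leaves the transport cost unchanged.
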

\begin{lemma}\label{basicstability}
Let $(E,d_{E})$ be a separable metric space and $\mu\in\mathcal{P}_{1}(E)$. Then
\[\frac{m}{n}\cdot\mathbb{E}[W_{1}(\mu_{m},\mu)]\leq\mathbb{E}[W_{1}(\mu_{n},\mu)],\]
for all $m,n\in\mathbb{N}$ with $m\leq n$, and
\[\frac{1}{2}\cdot\mathbb{E}[W_{1}(\mu_{m},\mu)]\leq\mathbb{E}[W_{1}(\mu_{2m},\mu)]\leq\mathbb{E}[W_{1}(\mu_{m},\mu)],\]
for all $m\in\mathbb{N}$.
\end{lemma}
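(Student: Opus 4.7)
The plan is to prove the first inequality by combining the Kantorovich--Rubinstein duality \eqref{krthm} with a conditional-expectation trick on the ``extra'' $n-m$ samples, then to obtain the lower half of the second chain as the special case $n=2m$, and finally to obtain the upper half by a standard joint-convexity argument for $W_{1}$.

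For the first inequality, couple the two empirical measures on the same sample: let $X_{1},\ldots,X_{n}$ be i.i.d.\ from $\mu$, and set $\mu_{m}=\tfrac{1}{m}\sum_{i=1}^{m}\delta_{X_{i}}$ and $\mu_{n}=\tfrac{1}{n}\sum_{i=1}^{n}\delta_{X_{i}}$. For any $1$-Lipschitz $f:E\to\mathbb{R}$ that is $\sigma(X_{1},\ldots,X_{m})$-measurable, \eqref{krthm} yields $W_{1}(\mu_{n},\mu)\geq \int f\,d\mu_{n}-\int f\,d\mu$ pointwise. Conditioning on $\sigma(X_{1},\ldots,X_{m})$ and using that $X_{m+1},\ldots,X_{n}$ are i.i.d.\ $\mu$ and independent of $f$, so that $\mathbb{E}[f(X_{i})\mid X_{1},\ldots,X_{m}]=\int f\,d\mu$ for $i>m$, gives
\[
\mathbb{E}\!\left[\int f\,d\mu_{n}-\int f\,d\mu \,\Big|\, X_{1},\ldots,X_{m}\right] \;=\; \frac{m}{n}\!\left(\int f\,d\mu_{m}-\int f\,d\mu\right).
\]
Choosing $f$ to be an $\varepsilon$-maximizer in the KR dual for $W_{1}(\mu_{m},\mu)$ (picked measurably from a fixed countable family, see below) yields $\mathbb{E}[W_{1}(\mu_{n},\mu)\mid X_{1},\ldots,X_{m}]\geq \tfrac{m}{n}(W_{1}(\mu_{m},\mu)-\varepsilon)$. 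Taking expectations and sending $\varepsilon\to0$ gives the desired bound.

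The lower half of the second chain is immediate by specializing to $n=2m$. For the upper half, partition the $2m$ i.i.d.\ samples into two disjoint blocks of size $m$, producing empirical measures $\nu^{(1)},\nu^{(2)}$ each equal in distribution to $\mu_{m}$, with $\mu_{2m}=\tfrac{1}{2}(\nu^{(1)}+\nu^{(2)})$. If $\gamma_{j}$ is an optimal $W_{1}$-coupling of $\nu^{(j)}$ with $\mu$, then $\tfrac{1}{2}(\gamma_{1}+\gamma_{2})$ is a coupling of $\mu_{2m}$ with $\mu=\tfrac{1}{2}\mu+\tfrac{1}{2}\mu$ of cost $\tfrac{1}{2}W_{1}(\nu^{(1)},\mu)+\tfrac{1}{2}W_{1}(\nu^{(2)},\mu)$, whence $W_{1}(\mu_{2m},\mu)\leq \tfrac{1}{2}(W_{1}(\nu^{(1)},\mu)+W_{1}(\nu^{(2)},\mu))$. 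Taking expectations and using $\nu^{(j)}\stackrel{d}{=}\mu_{m}$ yields the upper bound.

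The main obstacle is the measurable selection of the $\varepsilon$-maximizer $f$ in the first step, which must be jointly measurable in $x$ and the samples and $1$-Lipschitz for a.e.\ realization. This is handled by separability of $E$: fix a countable dense set $D\subset E$ and consider the countable family $\mathcal{G}$ of functions $x\mapsto \min_{i\leq k}(d_{E}(x,y_{i})+a_{i})$ with $k\in\mathbb{N}$, $y_{i}\in D$, $a_{i}\in\mathbb{Q}$. Each element is $1$-Lipschitz, and using the identity $f(x)=\inf_{y\in E}(f(y)+d_{E}(x,y))$ valid for any $1$-Lipschitz $f$, together with $\mu\in\mathcal{P}_{1}(E)$ to justify truncations, one sees that the KR supremum in \eqref{krthm} is attained in the sup over $\mathcal{G}$. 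Enumerating $\mathcal{G}=\{g_{k}\}_{k\geq1}$ and taking $f=g_{k(\omega)}$ with $k(\omega)$ the smallest index achieving the supremum within $\varepsilon$ gives the required measurable selection.
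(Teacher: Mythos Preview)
Your proof is correct and follows the same approach as the paper's: Kantorovich--Rubinstein duality plus conditioning on the first $m$ samples for the first inequality, and splitting the $2m$ samples into two blocks of size $m$ for the upper bound. The paper avoids your measurable-selection detour by taking the supremum over \emph{deterministic} $f$ after passing it inside the conditional expectation, i.e.\ $\mathbb{E}\bigl[\sup_{f}(\cdots)\mid X_{1},\ldots,X_{m}\bigr]\geq\sup_{f}\mathbb{E}\bigl[\cdots\mid X_{1},\ldots,X_{m}\bigr]=\tfrac{m}{n}W_{1}(\mu_{m},\mu)$, so no $\varepsilon$-maximizer or countable family is needed.
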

\begin{proof}
Let $X_{1},X_{2},\ldots$ be independent samples of $\mu$. We have
\[\mathbb{E}[W_{1}(\mu_{n},\mu)]=\mathbb{E}\sup_{\|f\|_{\mathrm{Lip}}\leq 1}\left(\frac{1}{n}\sum_{i=1}^{n}f(X_{i})-\int_{E}f\,d\mu\right).\]
Since the conditional expectation
\begin{align*}
&\mathbb{E}\left(\left.\sup_{\|f\|_{\mathrm{Lip}}\leq 1}\left(\frac{1}{n}\sum_{i=1}^{n}f(X_{i})-\int_{E}f\,d\mu\right)\right|\,X_{1},\ldots,X_{m}\right)\\\geq&
\sup_{\|f\|_{\mathrm{Lip}}\leq 1}\mathbb{E}\left(\left.\frac{1}{n}\sum_{i=1}^{n}f(X_{i})-\int_{E}f\,d\mu\right|\,X_{1},\ldots,X_{m}\right)\\=&
\sup_{\|f\|_{\mathrm{Lip}}\leq 1}\frac{1}{n}\sum_{i=1}^{m}\left(f(X_{i})-\int_{E}f\,d\mu\right),
\end{align*}
it follows that $\mathbb{E}[W_{1}(\mu_{n},\mu)]\geq\frac{m}{n}\cdot\mathbb{E}[W_{1}(\mu_{m},\mu)]$. This proves the first part of the result. To prove the second part, note that the lower bound $\frac{1}{2}\cdot\mathbb{E}[W_{1}(\mu_{m},\mu)]\leq\mathbb{E}[W_{1}(\mu_{2m},\mu)]$ follows from the first part by taking $n=2m$. For the upper bound, note that
\begin{align*}
&W_{1}(\mu_{2m},\mu)\\=&
\sup_{\|f\|_{\mathrm{Lip}}\leq 1}\left(\frac{1}{2m}\sum_{i=1}^{2m}f(X_{i})-\int_{E}f\,d\mu\right)\\\leq&
\frac{1}{2}\sup_{\|f\|_{\mathrm{Lip}}\leq 1}\left(\frac{1}{m}\sum_{i=1}^{m}f(X_{i})-\int_{E}f\,d\mu\right)+
\frac{1}{2}\sup_{\|f\|_{\mathrm{Lip}}\leq 1}\left(\frac{1}{m}\sum_{i=m+1}^{2m}f(X_{i})-\int_{E}f\,d\mu\right).
\end{align*}
Taking the expectation, we obtain $\mathbb{E}[W_{1}(\mu_{2m},\mu)]\leq\mathbb{E}[W_{1}(\mu_{m},\mu)]$.
\end{proof}
\begin{lemma}\label{mutmu}
Suppose that $(E,d_{E})$ is a separable metric space, $p\geq 1$, $\mu\in\mathcal{P}_{p}(E)$ and $T:E\to E$ is measurable. Then
\[W_{p}(\mu,T_{\#}\mu)\leq\left(\int_{E}d_{E}(x,T(x))^{p}\,d\mu(x)\right)^{1/p}.\]
\end{lemma}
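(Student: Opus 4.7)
The plan is to exhibit a single coupling between $\mu$ and $T_{\#}\mu$ and plug it into the definition of $W_p$. The natural candidate is the pushforward of $\mu$ under the diagonal-type map $\Phi:E\to E\times E$ defined by $\Phi(x)=(x,T(x))$. That is, I would set $\gamma=\Phi_{\#}\mu$, a probability measure on $E\times E$.

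Next, I would verify that $\gamma$ is indeed a coupling of $\mu$ and $T_{\#}\mu$. For the first marginal, the projection $\pi_1:E\times E\to E$ onto the first coordinate satisfies $\pi_1\circ\Phi=\mathrm{id}_E$, so $(\pi_1)_{\#}\gamma=(\pi_1\circ\Phi)_{\#}\mu=\mu$. For the second marginal, the projection $\pi_2$ satisfies $\pi_2\circ\Phi=T$, so $(\pi_2)_{\#}\gamma=T_{\#}\mu$. (Measurability of $\Phi$ follows from measurability of $T$ together with separability of $E$, so that the Borel $\sigma$-algebra on $E\times E$ is generated by products of Borel sets.)

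With this coupling in hand, the definition of $W_p$ gives
\[
W_p(\mu,T_{\#}\mu)^{p}\leq\int_{E\times E}d_{E}(x,y)^{p}\,d\gamma(x,y)
=\int_{E}d_{E}(x,T(x))^{p}\,d\mu(x),
\]
where the last equality is the change-of-variables formula for pushforward measures applied to the measurable function $(x,y)\mapsto d_{E}(x,y)^{p}$ and the map $\Phi$. Taking $p$-th roots yields the claimed inequality.

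There is no real obstacle here; the only mildly delicate point is ensuring $\Phi$ is measurable as a map into the product space, but this is immediate from the measurability of $T$ and the identity map. The right-hand side could of course be infinite, in which case the inequality is trivial; otherwise the finiteness of $\int d_E(x,T(x))^{p}\,d\mu$ and the assumption $\mu\in\mathcal{P}_{p}(E)$ ensure that $T_{\#}\mu\in\mathcal{P}_{p}(E)$ as well (by the triangle inequality $d_E(T(x),x_0)^p\leq 2^{p-1}(d_E(x,T(x))^p+d_E(x,x_0)^p)$), so $W_p(\mu,T_{\#}\mu)$ is well defined.
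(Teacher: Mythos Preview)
Your proof is correct and follows exactly the same approach as the paper: both define the coupling $\gamma$ as the pushforward of $\mu$ under $x\mapsto(x,T(x))$, check that its marginals are $\mu$ and $T_{\#}\mu$, and then apply the definition of $W_p$ together with the change-of-variables formula. Your additional remarks on measurability and on $T_{\#}\mu\in\mathcal{P}_p(E)$ are fine but not needed for the argument.
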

\begin{proof}
Let $\gamma$ be the pushforward of $\mu$ by the map $x\mapsto(x,T(x))$. Then $\gamma$ is a probability measure on $E\times E$ with marginal distributions $\mu$ and $T_{\#}\mu$. So
\[W_{p}(\mu,T_{\#}\mu)\leq\left(\int_{E\times E}d_{E}(x,y)^{p}\,d\gamma(x,y)\right)^{1/p}=\left(\int_{E}d_{E}(x,T(x))^{p}\,d\mu(x)\right)^{1/p}.\]
\end{proof}
\begin{lemma}\label{mixlemma}
Let $(E,d_{E})$ be a separable metric space, $p\geq 1$ and $k\in\mathbb{N}$. For $1\leq j\leq k$, let $\mu^{(j)},\nu^{(j)}\in\mathcal{P}(E)$, $t_{j}\geq 0$ be such that $\sum_{j=1}^{k}t_{j}=1$. Then
\[W_{p}\left(\sum_{j=1}^{k}t_{j}\mu^{(j)},\sum_{j=1}^{k}t_{j}\nu^{(j)}\right)^{p}\leq\sum_{j=1}^{k}t_{j}\cdot W_{p}(\mu^{(j)},\nu^{(j)})^{p}.\]
\end{lemma}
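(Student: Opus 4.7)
The plan is to prove this by gluing couplings, which is the standard convexity property of $W_p^p$ with respect to mixtures. The key observation is that if $\gamma^{(j)}$ is a coupling between $\mu^{(j)}$ and $\nu^{(j)}$ for each $j$, then the convex combination $\gamma:=\sum_{j=1}^{k}t_{j}\gamma^{(j)}$ is a probability measure on $E\times E$ whose marginals are exactly $\sum_{j=1}^{k}t_{j}\mu^{(j)}$ and $\sum_{j=1}^{k}t_{j}\nu^{(j)}$, so it is admissible in the definition of $W_{p}\left(\sum_{j=1}^{k}t_{j}\mu^{(j)},\sum_{j=1}^{k}t_{j}\nu^{(j)}\right)$.

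First I would dispose of the trivial case where $W_{p}(\mu^{(j)},\nu^{(j)})=\infty$ for some $j$ with $t_{j}>0$ (the right-hand side is then infinite). Otherwise, to avoid invoking existence of optimal couplings, I would fix $\varepsilon>0$ and, for each $j$ with $t_{j}>0$, pick an $\varepsilon$-near-optimal coupling $\gamma^{(j)}$ of $(\mu^{(j)},\nu^{(j)})$ satisfying
\[\int_{E\times E}d_{E}(x,y)^{p}\,d\gamma^{(j)}(x,y)\leq W_{p}(\mu^{(j)},\nu^{(j)})^{p}+\varepsilon.\]
For $j$ with $t_{j}=0$, I would take any coupling (e.g. a product measure), since its contribution vanishes.

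Next I would form $\gamma:=\sum_{j=1}^{k}t_{j}\gamma^{(j)}$, verify directly from the definition of pushforward marginals that this is indeed a coupling of the two mixtures, and then compute
\[W_{p}\left(\sum_{j=1}^{k}t_{j}\mu^{(j)},\sum_{j=1}^{k}t_{j}\nu^{(j)}\right)^{p}\leq\int_{E\times E}d_{E}(x,y)^{p}\,d\gamma(x,y)=\sum_{j=1}^{k}t_{j}\int_{E\times E}d_{E}(x,y)^{p}\,d\gamma^{(j)}(x,y)\leq\sum_{j=1}^{k}t_{j}\,W_{p}(\mu^{(j)},\nu^{(j)})^{p}+\varepsilon,\]
and finally let $\varepsilon\to 0$.

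There is essentially no main obstacle: the only mild subtlety is the measurability/existence of near-optimal couplings on a separable metric space, but this is standard since $W_{p}$ is defined as an infimum and we only need $\varepsilon$-approximations rather than minimizers. No deeper structural argument is required.
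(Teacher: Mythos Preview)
Your proposal is correct and follows essentially the same approach as the paper: form the convex combination $\gamma=\sum_{j=1}^{k}t_{j}\gamma^{(j)}$ of couplings, check it is a coupling of the two mixtures, and bound the transport cost; the paper simply takes the infimum over the $\gamma^{(j)}$ at the end rather than using $\varepsilon$-near-optimal couplings, which is a cosmetic difference.
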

\begin{proof}
For each $1\leq j\leq k$, let $\gamma^{(j)}$ be a probability measure on $E\times E$ with marginal distributions $\mu^{(j)}$ and $\nu^{(j)}$. Then the probability measure $\sum_{j=1}^{k}t_{j}\gamma^{(j)}$ on $E\times E$ has marginal distributions $\sum_{j=1}^{k}t_{j}\mu^{(j)}$ and $\sum_{j=1}^{k}t_{j}\nu^{(j)}$. So
\begin{eqnarray*}
W_{p}\left(\sum_{j=1}^{k}t_{j}\mu^{(j)},\sum_{j=1}^{k}t_{j}\nu^{(j)}\right)^{p}&\leq&
\int_{E}d_{E}(x,y)^{p}\;d\left(\sum_{j=1}^{k}t_{j}\gamma^{(j)}\right)(x,y)\\&=&
\sum_{j=1}^{k}t_{j}\int_{E}d_{E}(x,y)^{p}\,d\gamma^{(j)}(x,y).
\end{eqnarray*}
Taking infimum over all the $\gamma^{(j)}$, for $1\leq j\leq k$, completes the proof.
\end{proof}
\begin{lemma}\label{2samples}
Let $(E,d_{E})$ be a separable metric space, $p\geq 1$, $\mu\in\mathcal{P}_{p}(E)$ and $n\in\mathbb{N}$. Let $\mu_{n}$ and $\mu_{n}'$ be two independent empirical measures of $\mu$ with sample size $n$. Then
\[\|W_{p}(\mu_{n},\mu)\|_{L^{p}}\leq\|W_{p}(\mu_{n},\mu_{n}')\|_{L^{p}}\leq 2\cdot\|W_{p}(\mu_{n},\mu)\|_{L^{p}}.\]
\end{lemma}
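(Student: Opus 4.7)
The plan is to prove the two inequalities separately: the upper bound is immediate from the triangle inequality, while the lower bound follows from a conditional Jensen-type argument using the convexity of $\nu\mapsto W_{p}(\sigma,\nu)^{p}$.

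For the upper bound, I would simply invoke the triangle inequality $W_{p}(\mu_{n},\mu_{n}')\leq W_{p}(\mu_{n},\mu)+W_{p}(\mu,\mu_{n}')$, which holds pointwise. Taking $L^{p}$ norms and applying Minkowski's inequality gives
\[\|W_{p}(\mu_{n},\mu_{n}')\|_{L^{p}}\leq\|W_{p}(\mu_{n},\mu)\|_{L^{p}}+\|W_{p}(\mu_{n}',\mu)\|_{L^{p}}.\]
Since $\mu_{n}'$ has the same distribution as $\mu_{n}$, the right-hand side equals $2\|W_{p}(\mu_{n},\mu)\|_{L^{p}}$.

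For the lower bound, the key observation is that Lemma \ref{mixlemma}, specialized to $\mu^{(j)}=\sigma$ for all $j$, says precisely that $\nu\mapsto W_{p}(\sigma,\nu)^{p}$ is convex on $\mathcal{P}_{p}(E)$. I would use an integral version of this statement: if $\pi$ is a Borel probability distribution on $\mathcal{P}_{p}(E)$ and $\mu=\int\nu\,d\pi(\nu)$ in the sense that $\mu(A)=\int\nu(A)\,d\pi(\nu)$ for all Borel $A\subset E$, then
\[W_{p}(\sigma,\mu)^{p}\leq\int W_{p}(\sigma,\nu)^{p}\,d\pi(\nu).\]
The proof mirrors Lemma \ref{mixlemma}: for each $\nu$ pick a coupling $\gamma_{\nu}$ of $(\sigma,\nu)$, form $\gamma:=\int\gamma_{\nu}\,d\pi(\nu)$, check that $\gamma$ has marginals $\sigma$ and $\mu$, and integrate $d_{E}(x,y)^{p}$. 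I would then condition on $\mu_{n}$: because $\mu_{n}'$ is independent of $\mu_{n}$, its conditional distribution still has ``barycenter'' $\mu$, and the integral convexity inequality with $\sigma=\mu_{n}$ and $\pi$ equal to the law of $\mu_{n}'$ yields
\[W_{p}(\mu_{n},\mu)^{p}\leq\mathbb{E}\bigl[W_{p}(\mu_{n},\mu_{n}')^{p}\,\bigm|\,\mu_{n}\bigr].\]
Taking expectations gives $\|W_{p}(\mu_{n},\mu)\|_{L^{p}}^{p}\leq\|W_{p}(\mu_{n},\mu_{n}')\|_{L^{p}}^{p}$, which is the lower bound.

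The main obstacle I anticipate is the measurability needed to form the integrated coupling $\gamma=\int\gamma_{\nu}\,d\pi(\nu)$, i.e., choosing the couplings $\gamma_{\nu}$ in a jointly measurable fashion as $\nu$ varies. This is standard and can be handled by a measurable selection theorem in the separable metric setting, or sidestepped by a discrete-mixture approximation followed by a limiting argument that reduces everything to the finite case already covered by Lemma \ref{mixlemma}. Apart from this technicality, the proof is a straightforward application of the triangle inequality and convexity.
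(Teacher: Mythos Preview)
Your proposal is correct and follows essentially the same route as the paper: the upper bound via the triangle inequality is identical, and for the lower bound the paper also averages a near-optimal coupling of $(\mu_n,\mu_n')$ over the randomness of $\mu_n'$ (written as a conditional expectation $\eta(A)=\mathbb{E}(\gamma(A)\mid\mu_n)$) to produce a coupling of $(\mu_n,\mu)$, which is exactly your integral-convexity/Jensen argument in different language. The paper sidesteps the measurable-selection issue by reducing to countable $E$, which is the same spirit as your discrete-approximation suggestion.
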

\begin{proof}
The upper bound follows from $W_{p}(\mu_{n},\mu_{n}')\leq W_{p}(\mu_{n},\mu)+W_{p}(\mu_{n}',\mu)$. To prove the lower bound, without loss of generality, assume that $E$ is countable so that there is no measurability issue in what follows. Fix $\epsilon>0$. There exists a (random) probability measure $\gamma$ on $E\times E$ with marginal distributions $\mu_{n}$ and $\mu_{n}'$ such that
\[\int_{E\times E}d_{E}(x,y)^{p}\,d\gamma(x,y)\leq W_{p}(\mu_{n},\mu_{n}')^{p}+\epsilon.\]
Consider the probability measure $\eta$ on $E\times E$ defined by the conditional expectation
\[\eta(A)=\mathbb{E}(\gamma(A)|\,\mu_{n}),\]
for all measurable $A\subset E\times E$. It is easy to check that the marginal distributions of $\eta$ are $\mu_{n}$ and $\mathbb{E}\mu_{n}'=\mu$. Thus,
\begin{eqnarray*}
W_{p}(\mu_{n},\mu)^{p}&\leq&\int_{E\times E}d_{E}(x,y)^{p}\,d\eta(x,y)\\&=&
\mathbb{E}\left(\left.\int_{E\times E}d_{E}(x,y)^{p}\,d\gamma(x,y)\right|\,\mu_{n}\right)\leq\mathbb{E}(\left.W_{p}(\mu_{n},\mu_{n}')^{p}\right|\mu_{n})+\epsilon.
\end{eqnarray*}
Taking the full expectation on both sides and taking $\epsilon\to 0$, we obtain $\mathbb{E}[W_{p}(\mu_{n},\mu)^{p}]\leq\mathbb{E}[W_{p}(\mu_{n},\mu_{n}')^{p}]$. This completes the proof.
\end{proof}
\begin{lemma}\label{easybound}
Suppose that $(E,d_{E})$ is a separable metric space, $p\geq 1$, $\mu,\widetilde{\mu}\in\mathcal{P}_{p}(E)$ and $n\in\mathbb{N}$. Let $\widetilde{\mu}_{n}$ be an empirical measure of $\widetilde{\mu}$ with sample size $n$. Then
\[\bigg|\,\|W_{p}(\mu_{n},\mu)\|_{L^{p}}-\|W_{p}(\widetilde{\mu}_{n},\widetilde{\mu})\|_{L^{p}}\bigg|\leq 2W_{p}(\mu,\widetilde{\mu}).\]
\end{lemma}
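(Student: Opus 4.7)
The plan is to couple the empirical measures of $\mu$ and $\widetilde{\mu}$ through a joint sampling procedure and then apply the triangle inequality for $W_p$. Fix $\epsilon>0$ and let $\gamma$ be a coupling of $\mu$ and $\widetilde{\mu}$ on $E\times E$ satisfying $\int d_{E}(x,y)^{p}\,d\gamma(x,y)\leq W_{p}(\mu,\widetilde{\mu})^{p}+\epsilon$. Draw i.i.d.\ pairs $(X_{i},\widetilde{X}_{i})_{i=1}^{n}$ from $\gamma$. Then the marginal sequences $X_{1},\ldots,X_{n}$ and $\widetilde{X}_{1},\ldots,\widetilde{X}_{n}$ are i.i.d.\ samples of $\mu$ and $\widetilde{\mu}$ respectively, so $\mu_{n}=\frac{1}{n}\sum\delta_{X_{i}}$ and $\widetilde{\mu}_{n}=\frac{1}{n}\sum\delta_{\widetilde{X}_{i}}$ are valid empirical measures (and since $\|W_{p}(\mu_{n},\mu)\|_{L^{p}}$ only depends on the law of $\mu_{n}$, it does not matter that these are not independent of each other).

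Next I would apply the triangle inequality for the $p$-Wasserstein distance twice:
\[
W_{p}(\mu_{n},\mu)\leq W_{p}(\mu_{n},\widetilde{\mu}_{n})+W_{p}(\widetilde{\mu}_{n},\widetilde{\mu})+W_{p}(\widetilde{\mu},\mu),
\]
take $L^{p}$ norms, and use the triangle inequality for $\|\cdot\|_{L^{p}}$ to obtain
\[
\|W_{p}(\mu_{n},\mu)\|_{L^{p}}\leq\|W_{p}(\mu_{n},\widetilde{\mu}_{n})\|_{L^{p}}+\|W_{p}(\widetilde{\mu}_{n},\widetilde{\mu})\|_{L^{p}}+W_{p}(\mu,\widetilde{\mu}).
\]

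The key step is to control $\|W_{p}(\mu_{n},\widetilde{\mu}_{n})\|_{L^{p}}$ by $W_{p}(\mu,\widetilde{\mu})$; this is where the coupling is used. By Lemma \ref{wppermute}, taking $\sigma$ to be the identity permutation,
\[
W_{p}(\mu_{n},\widetilde{\mu}_{n})^{p}\leq\frac{1}{n}\sum_{i=1}^{n}d_{E}(X_{i},\widetilde{X}_{i})^{p}.
\]
Taking expectations and using that the pairs $(X_{i},\widetilde{X}_{i})$ are i.i.d.\ with law $\gamma$ gives
\[
\mathbb{E}[W_{p}(\mu_{n},\widetilde{\mu}_{n})^{p}]\leq\int_{E\times E}d_{E}(x,y)^{p}\,d\gamma(x,y)\leq W_{p}(\mu,\widetilde{\mu})^{p}+\epsilon,
\]
so $\|W_{p}(\mu_{n},\widetilde{\mu}_{n})\|_{L^{p}}\leq(W_{p}(\mu,\widetilde{\mu})^{p}+\epsilon)^{1/p}$.

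Plugging this back in and letting $\epsilon\to 0$ yields $\|W_{p}(\mu_{n},\mu)\|_{L^{p}}-\|W_{p}(\widetilde{\mu}_{n},\widetilde{\mu})\|_{L^{p}}\leq 2W_{p}(\mu,\widetilde{\mu})$. Swapping the roles of $\mu$ and $\widetilde{\mu}$ gives the reverse inequality, completing the proof. The main (minor) obstacle is the non-independence of $\mu_{n}$ and $\widetilde{\mu}_{n}$ after the coupling, but this is harmless because each marginal has the correct law and the $L^{p}$ norms in the statement depend only on those laws.
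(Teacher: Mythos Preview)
Your proof is correct and follows essentially the same approach as the paper: couple $\mu$ and $\widetilde{\mu}$ via a near-optimal $\gamma$, sample i.i.d.\ pairs from $\gamma$ to build coupled empirical measures, apply the triangle inequality for $W_{p}$ and for $\|\cdot\|_{L^{p}}$, and bound $\|W_{p}(\mu_{n},\widetilde{\mu}_{n})\|_{L^{p}}$ by the coupling cost. The only cosmetic differences are that the paper takes an arbitrary coupling and passes to the infimum at the end (instead of your $\epsilon$-optimal coupling with $\epsilon\to 0$), and it first applies the reverse triangle inequality $\big|\|A\|_{L^{p}}-\|B\|_{L^{p}}\big|\le\|A-B\|_{L^{p}}$ before splitting with the $W_{p}$ triangle inequality; neither difference is substantive.
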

\begin{proof}
Let $\gamma$ be a probability measure on $E\times E$ with marginal distributions $\mu$ and $\widetilde{\mu}$. Take independent samples $(X_{1},Y_{1}),\ldots,(X_{n},Y_{n})$ of $\gamma$. Since each of the random variables $X_{1},\ldots,X_{n}$ has distribution $\mu$ and since each of the random variables $Y_{1},\ldots,Y_{n}$ has distribution $\widetilde{\mu}$, we have $\mathbb{E}[W_{p}(\mu_{n},\mu)^{p}]=\mathbb{E}[W_{p}(\frac{1}{n}\sum_{i=1}^{n}\delta_{X_{i}},\mu)^{p}]$ and $\mathbb{E}[W_{p}(\widetilde{\mu}_{n},\widetilde{\mu})^{p}]=\mathbb{E}[W_{p}(\frac{1}{n}\sum_{i=1}^{n}\delta_{Y_{i}},\widetilde{\mu})^{p}]$. So
\begin{eqnarray*}
\|W_{p}(\mu_{n},\mu)\|_{L^{p}}-\|W_{p}(\widetilde{\mu}_{n},\widetilde{\mu})\|_{L^{p}}&=&\left\|W_{p}\left(\frac{1}{n}\sum_{i=1}^{n}\delta_{X_{i}},\mu\right)\right\|_{L^{p}}-\left\|W_{p}\left(\frac{1}{n}\sum_{i=1}^{n}\delta_{Y_{i}},\widetilde{\mu}\right)\right\|_{L^{p}}\\&\leq&
\left\|W_{p}\left(\frac{1}{n}\sum_{i=1}^{n}\delta_{X_{i}},\mu\right)-W_{p}\left(\frac{1}{n}\sum_{i=1}^{n}\delta_{Y_{i}},\widetilde{\mu}\right)\right\|_{L^{p}}
\\&\leq&
W_{p}(\mu,\widetilde{\mu})+\left\|W_{p}\left(\frac{1}{n}\sum_{i=1}^{n}\delta_{X_{i}},\frac{1}{n}\sum_{i=1}^{n}\delta_{Y_{i}}\right)\right\|_{L^{p}}\\&\leq&
W_{p}(\mu,\widetilde{\mu})+\left\|\left(\frac{1}{n}\sum_{i=1}^{n}d_{E}(X_{i},Y_{i})^{p}\right)^{\frac{1}{p}}\right\|_{L^{p}}\\&=&
W_{p}(\mu,\widetilde{\mu})+\left(\frac{1}{n}\sum_{i=1}^{n}\mathbb{E}[d_{E}(X_{i},Y_{i})^{p}]\right)^{\frac{1}{p}}\\&=&
W_{p}(\mu,\widetilde{\mu})+\left(\int_{E\times E}d_{E}(x,y)^{p}\,d\gamma(x,y)\right)^{\frac{1}{p}}.
\end{eqnarray*}
Taking infimum over $\gamma$, we obtain $\|W_{p}(\mu_{n},\mu)\|_{L^{p}}-\|W_{p}(\widetilde{\mu}_{n},\widetilde{\mu})\|_{L^{p}}\leq 2W_{p}(\mu,\widetilde{\mu})$. Interchanging the roles of $\mu$ and $\widetilde{\mu}$, we also obtain $\|W_{p}(\widetilde{\mu}_{n},\widetilde{\mu})\|_{L^{p}}-\|W_{p}(\mu_{n},\mu)\|_{L^{p}}\leq 2W_{p}(\mu,\widetilde{\mu})$.
\end{proof}
\begin{lemma}\label{measselect}
Let $(E,d_{E})$ be a separable metric space and let $S\subset E$ be finite. Then there is a measurable map $T:E\to S$ such that $d_{E}(x,T(x))=d_{E}(x,S)$ for all $x\in E$.
\end{lemma}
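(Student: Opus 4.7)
The plan is to construct $T$ explicitly as a nearest-neighbor assignment, breaking ties in a canonical way, and then check that the resulting map is measurable. Since $S$ is finite, enumerate its points as $s_{1},\ldots,s_{k}$. The idea is to send $x$ to the $s_{i}$ achieving the minimum of $d_{E}(x,s_{j})$ over $j$, and when several indices achieve the minimum, pick the one with the smallest index.

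Concretely, I would first define, for each $1\leq i\leq k$,
\[A_{i}=\{x\in E:d_{E}(x,s_{i})\leq d_{E}(x,s_{j})\text{ for all }1\leq j\leq k\}.\]
Each function $x\mapsto d_{E}(x,s_{j})$ is continuous (in fact $1$-Lipschitz), so each $A_{i}$ is a closed, hence Borel, subset of $E$. Moreover, $\bigcup_{i=1}^{k}A_{i}=E$ because the minimum of $d_{E}(x,s_{j})$ over $j$ is attained. I would then break ties by setting $B_{1}=A_{1}$ and $B_{i}=A_{i}\setminus\bigcup_{j<i}A_{j}$ for $2\leq i\leq k$. The $B_{i}$ are Borel and partition $E$.

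Next I would define $T:E\to S$ by $T(x)=s_{i}$ for $x\in B_{i}$. Since $T^{-1}(\{s_{i}\})=B_{i}$ is Borel, and $S$ is finite (so any subset of $S$ is a finite union of singletons), $T$ is Borel measurable. Finally, for $x\in B_{i}$ we have $x\in A_{i}$, so $d_{E}(x,T(x))=d_{E}(x,s_{i})=\min_{1\leq j\leq k}d_{E}(x,s_{j})=d_{E}(x,S)$, as desired.

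There is no real obstacle here: the only point requiring any care is the measurability of the tie-breaking, which is handled by the standard trick of removing previously assigned cells to form a Borel partition. Separability of $E$ is not even strictly needed for this argument, since the continuity of each $d_{E}(\cdot,s_{j})$ already gives Borel measurability of the $A_{i}$; separability is merely the ambient hypothesis of the paper.
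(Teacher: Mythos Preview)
Your proof is correct and is essentially the same as the paper's: both enumerate $S$, define the nearest-neighbor cells $A_{i}$, disjointify them into $B_{i}=A_{i}\setminus\bigcup_{j<i}A_{j}$, and set $T(x)=s_{i}$ on $B_{i}$. Your version even supplies a bit more detail on why the $A_{i}$ are Borel.
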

\begin{proof}
Enumerate the set $S=\{x_{1},\ldots,x_{r}\}$. For each $1\leq i\leq r$, let
\[A_{i}=\{x\in E:\,d_{E}(x,x_{i})=d_{E}(x,S)\}.\]
Then $A_{1}\cup\ldots\cup A_{r}=E$. Define $B_{1}=A_{1}$ and for $1\leq i\leq r$, define $B_{i}=A_{i}\backslash(A_{1}\cup\ldots\cup A_{i-1})$. Then $B_{1},\ldots,B_{r}$ forms a partition of $E$. For each $1\leq i\leq r$ and each $x\in B_{i}$, set $T(x)=x_{i}$. Then $T$ is measurable, and for each $1\leq i\leq r$ and each $x\in B_{i}$, we have $d_{E}(x,T(x))=d_{E}(x,x_{i})=d_{E}(x,S)$.
\end{proof}
\begin{lemma}\label{enpalternative}
Let $(E,d_{E})$ be a separable metric space, $n\in\mathbb{N}$, $p\geq 1$ and $\mu\in\mathcal{P}_{p}(E)$. Then
\[e_{n,p}(\mu)=\inf_{\substack{S\subset E\\|S|\leq n}}\left(\int_{E}d_{E}(x,S)^{p}\,d\mu(x)\right)^{1/p},\]
where the infimum is over all subsets $S\subset E$ containing at most $n$ points.
\end{lemma}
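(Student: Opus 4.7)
The plan is to prove the two inequalities separately, since the identity amounts to saying that the optimal $\nu$ in the definition of $e_{n,p}(\mu)$ can always be taken of the form $T_{\#}\mu$ for a nearest-point projection $T$ onto a finite set.

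For the inequality $e_{n,p}(\mu)\leq\inf_{S}\left(\int_{E}d_{E}(x,S)^{p}\,d\mu(x)\right)^{1/p}$, fix any $S\subset E$ with $|S|\leq n$. Since $S$ is finite, Lemma \ref{measselect} supplies a measurable map $T:E\to S$ with $d_{E}(x,T(x))=d_{E}(x,S)$ for every $x\in E$. Then $T_{\#}\mu$ is supported on $S$, hence on at most $n$ points, so it is an admissible competitor in the definition of $e_{n,p}(\mu)$. Applying Lemma \ref{mutmu} gives
\[e_{n,p}(\mu)\leq W_{p}(\mu,T_{\#}\mu)\leq\left(\int_{E}d_{E}(x,T(x))^{p}\,d\mu(x)\right)^{1/p}=\left(\int_{E}d_{E}(x,S)^{p}\,d\mu(x)\right)^{1/p}.\]
Taking the infimum over $S$ yields this direction.

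For the reverse inequality, fix a probability measure $\nu$ supported on a set $S\subset E$ with $|S|\leq n$, and let $\gamma$ be any coupling of $\mu$ and $\nu$. Since $\nu$ is concentrated on $S$, so is the second marginal of $\gamma$, and therefore $y\in S$ for $\gamma$-almost every $(x,y)$. Consequently $d_{E}(x,y)\geq d_{E}(x,S)$ $\gamma$-a.s., which yields
\[\int_{E\times E}d_{E}(x,y)^{p}\,d\gamma(x,y)\geq\int_{E\times E}d_{E}(x,S)^{p}\,d\gamma(x,y)=\int_{E}d_{E}(x,S)^{p}\,d\mu(x),\]
using the first marginal in the last step. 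Taking the infimum over $\gamma$ gives $W_{p}(\mu,\nu)^{p}\geq\int_{E}d_{E}(x,S)^{p}\,d\mu(x)$, and then taking the infimum over $\nu$ (noting that every admissible $\nu$ is supported on some set $S$ with $|S|\leq n$) gives the desired bound.

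The argument is essentially routine, relying only on Lemma \ref{measselect} and Lemma \ref{mutmu} together with the trivial bound $d_{E}(x,y)\geq d_{E}(x,S)$ when $y\in S$. The only point that requires a moment of care is the measurability of the nearest-point map $T$, which is why Lemma \ref{measselect} is invoked rather than just picking $T(x)$ to be an arbitrary minimizer; beyond that, there is no real obstacle.
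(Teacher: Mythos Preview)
Your proof is correct and follows essentially the same approach as the paper's own proof: both directions are handled identically, using Lemma~\ref{measselect} and Lemma~\ref{mutmu} for one inequality and the pointwise bound $d_{E}(x,y)\geq d_{E}(x,S)$ inside the coupling integral for the other. The only difference is that you present the two inequalities in the opposite order.
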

\begin{proof}
For every probability measure $\nu$ on $E$ supported on at most $n$ points in $E$, letting $\mathrm{supp}(\nu)$ be its support, we have
\[W_{p}(\mu,\nu)\geq
\inf_{\gamma}\left(\int_{E\times E}d_{E}(x,\mathrm{supp}(\nu))^{p}\,d\gamma(x,y)\right)^{1/p}=\left(\int_{E}d_{E}(x,\mathrm{supp}(\nu))^{p}\,d\mu(x)\right)^{1/p},\]
where the infimum is over all probability distributions $\gamma$ on $E\times E$ with marginal distributions $\mu$ and $\nu$. Thus,
\[e_{n,p}(\mu)\geq\inf_{\substack{S\subset E\\|S|\leq n}}\left(\int_{E}d_{E}(x,S)^{p}\,d\mu(x)\right)^{p}.\]
To prove the opposite inequality, let $S\subset E$ with $|S|\leq n$. By Lemma \ref{measselect}, there is a measurable map $T:E\to S$ such that $d_{E}(x,T(x))=d_{E}(x,S)$ for all $x\in S$. We have
\[\left(\int_{E}d_{E}(x,S)^{p}\,d\mu(x)\right)^{1/p}=
\left(\int_{E}d_{E}(x,T(x))^{p}\,d\mu(x)\right)^{1/p}\geq W_{p}(\mu,T_{\#}\mu)\geq e_{n,p}(\mu),\]
where the second step follows from Lemma \ref{mutmu} and the last step follows from the definition of $e_{n,p}(\mu)$.
\end{proof}
\section{Proof of the second main result}
In this section, we prove the second main result Theorem \ref{main2}. The idea of the proof here is quite similar to that in \cite[Theorem 1.1]{BL}, except for one key difference, namely, instead of controlling the diameter of each block in a partition, we control the number of blocks in a partition.

We begin with some terminologies. Let $(E,d_{E})$ be a finite metric space. If $\zeta$ is a measure on $E$ and $a\geq 0$, then $a\zeta$ is the measure on $E$ defined by
\[(a\zeta)(A)=a\zeta(A),\]
for $A\subset E$. If moreover, $T:E\to E$, then $T_{\#}\zeta$ is the measure on $E$ defined by
\[(T_{\#}\zeta)(A)=\zeta(T^{-1}(A)),\]
for $A\subset E$. Observe that the measures $T_{\#}(a\zeta)=aT_{\#}\zeta$ coincide for all $a\geq 0$.

For two measures $\zeta$ and $\eta$ on $E$, we write $\zeta\leq\eta$ if
\[\zeta(A)\leq\eta(A),\]
for all $A\subset E$. If $\zeta\leq\eta$ then $\eta-\zeta$ is the measure on $E$ defined by
\[(\eta-\zeta)(A)=\eta(A)-\zeta(A),\]
for $A\subset E$. Observe that if $\zeta\leq\eta$ and $T:E\to E$, then $T_{\#}\zeta\leq T_{\#}\eta$ and
\[T_{\#}(\eta-\zeta)=T_{\#}\eta-T_{\#}\zeta.\]

For two probability measures $\mu$ and $\nu$ on $E$, define the measure $\mu\wedge\nu$ on $E$ by
\[(\mu\wedge\nu)(\{x\})=\min(\mu(\{x\}),\nu(\{x\})),\]
for $x\in E$. Observe that $\mu\wedge\nu\leq\mu$ and $\mu\wedge\nu\leq\nu$ and the measures
\[\frac{\mu-(\mu\wedge\nu)}{1-(\mu\wedge\nu)(E)}\quad\text{and}\quad\frac{\nu-(\mu\wedge\nu)}{1-(\mu\wedge\nu)(E)}\]
on $E$ are probability measures, if $(\mu\wedge\nu)(E)<1$.
\begin{definition}\label{dollardef}
Let $\mu$ and $\nu$ be probability measures on a finite metric space $(E,d_{E})$. Let $p\geq 1$. Define
\[W_{p}^{\$}(\mu,\nu):=[1-(\mu\wedge\nu)(E)]^{\frac{1}{p}}W_{p}\left(\frac{\mu-(\mu\wedge\nu)}{1-(\mu\wedge\nu)(E)},\frac{\nu-(\mu\wedge\nu)}{1-(\mu\wedge\nu)(E)}\right).\]
When $\mu=\nu$, we have $(\mu\wedge\nu)(E)=1$ and we set $W_{p}^{\$}(\mu,\nu)=0$.
\end{definition}
\begin{remark}\label{p1remarkdollar}
When $p=1$, by the Kantorovich-Rubinstein theorem (\ref{krthm}), we have $W_{1}^{\$}(\mu,\nu)=W_{1}(\mu,\nu)$.
\end{remark}
\begin{remark}
When $p>1$, the $W_{p}^{\$}$ does not necessarily define a metric on the set of all probability measures on $E$.
\end{remark}
We now prove two lemmas regarding the classical $p$-Wasserstein distance $W_{p}$. These lemmas are then applied to bound $W_{p}^{\$}(\mu,\nu)$ in Lemma \ref{basicbound} below. The first lemma says that the quantity
\[(1-\zeta(E))^{\frac{1}{p}}W_{p}\left(\frac{\mu-\zeta}{1-\zeta(E)},\frac{\nu-\zeta}{1-\zeta(E)}\right)\]
is increasing in $\zeta$ for measures $\zeta\leq\mu\wedge\nu$.
\begin{lemma}\label{dollar2}
Let $\mu$ and $\nu$ be probability measures on a finite metric space $E$. Let $\zeta$ and $\eta$ be measures on $E$ such that $\zeta\leq\eta\leq\mu\wedge\nu$. Then
\[(1-\zeta(E))^{\frac{1}{p}}W_{p}\left(\frac{\mu-\zeta}{1-\zeta(E)},\frac{\nu-\zeta}{1-\zeta(E)}\right)\leq(1-\eta(E))^{\frac{1}{p}}W_{p}\left(\frac{\mu-\eta}{1-\eta(E)},\frac{\nu-\eta}{1-\eta(E)}\right).\]
\end{lemma}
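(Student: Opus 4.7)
My plan is to express the probability measures on the left-hand side as convex combinations whose ``second component'' is common to both, and then apply the mixture inequality from Lemma~\ref{mixlemma} to cancel the contribution of that common component. Specifically, I write $\alpha := \eta - \zeta$, which is a non-negative measure since $\zeta \leq \eta$, and I abbreviate $s := \zeta(E)$, $t := \eta(E)$, so $\alpha(E) = t - s$. The (only interesting) case is $t < 1$; the case $t=1$ forces $\mu = \nu = \eta$ and both sides are $0$.

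The key algebraic observation is the identity $\mu - \zeta = (\mu - \eta) + \alpha$, and likewise for $\nu$. Setting $\lambda := \frac{1-t}{1-s} \in [0,1]$ and $\bar\alpha := \alpha/(t-s)$ (a probability measure when $t>s$), a direct computation gives
\[
\frac{\mu - \zeta}{1-s} \;=\; \lambda\cdot\frac{\mu-\eta}{1-t} \;+\; (1-\lambda)\bar\alpha,
\qquad
\frac{\nu - \zeta}{1-s} \;=\; \lambda\cdot\frac{\nu-\eta}{1-t} \;+\; (1-\lambda)\bar\alpha,
\]
because $\frac{1-t}{1-s} + \frac{t-s}{1-s} = 1$. (If $t = s$ the identity is trivial: both sides of the lemma are equal.) Thus both normalized measures on the left are the same convex combination of the corresponding normalized measure on the right with the common measure $\bar\alpha$.

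Now I apply Lemma~\ref{mixlemma} with $k=2$, $t_1 = \lambda$, $t_2 = 1-\lambda$, $\mu^{(1)} = (\mu-\eta)/(1-t)$, $\nu^{(1)} = (\nu-\eta)/(1-t)$, and $\mu^{(2)} = \nu^{(2)} = \bar\alpha$. Since $W_p(\bar\alpha,\bar\alpha) = 0$, this yields
\[
W_p\!\left(\frac{\mu-\zeta}{1-s},\frac{\nu-\zeta}{1-s}\right)^{p} \;\leq\; \lambda\cdot W_p\!\left(\frac{\mu-\eta}{1-t},\frac{\nu-\eta}{1-t}\right)^{p}.
\]
Multiplying by $1-s$ and using $(1-s)\lambda = 1-t$, then taking $p$-th roots, gives exactly the claimed inequality. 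I don't anticipate a genuine obstacle here; the only subtle point is handling the degenerate cases $s = t$ or $t = 1$, which is a matter of bookkeeping rather than substance.
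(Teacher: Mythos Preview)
Your proposal is correct and is essentially identical to the paper's own proof: both write the normalized measures on the left as the convex combination $\lambda\cdot\frac{\mu-\eta}{1-\eta(E)}+(1-\lambda)\cdot\frac{\eta-\zeta}{\eta(E)-\zeta(E)}$ (and similarly for $\nu$) with $\lambda=\frac{1-\eta(E)}{1-\zeta(E)}$, and then apply Lemma~\ref{mixlemma} to kill the common second component. Your treatment of the degenerate cases $\eta(E)=1$ and $\zeta(E)=\eta(E)$ is a bit more explicit than the paper's, but otherwise the arguments coincide.
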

\begin{proof}
Let
\[t=\frac{1-\eta(E)}{1-\zeta(E)}.\]
Note that $t\in[0,1]$. We have
\[\frac{\mu-\zeta}{1-\zeta(E)}=t\left(\frac{\mu-\eta}{1-\eta(E)}\right)+(1-t)\left(\frac{\eta-\zeta}{\eta(E)-\zeta(E)}\right),\]
and
\[\frac{\nu-\zeta}{1-\zeta(E)}=t\left(\frac{\nu-\eta}{1-\eta(E)}\right)+(1-t)\left(\frac{\eta-\zeta}{\eta(E)-\zeta(E)}\right).\]
So applying Lemma \ref{mixlemma} with $k=2$, $t_{1}=t$ and $t_{2}=1-t$, we obtain
\[W_{p}\left(\frac{\mu-\zeta}{1-\zeta(E)},\frac{\nu-\zeta}{1-\zeta(E)}\right)^{p}\leq t\cdot W_{p}\left(\frac{\mu-\eta}{1-\eta(E)},\frac{\nu-\eta}{1-\eta(E)}\right)^{p}+(1-t)\cdot 0.\]
Thus the result follows.
\end{proof}
\begin{remark}\label{wpdollarineq}
Taking $\zeta=0$ and $\eta=\mu\wedge\nu$ in Lemma \ref{dollar2}, we get $W_{p}(\mu,\nu)\leq W_{p}^{\$}(\mu,\nu)$.
\end{remark}
\begin{lemma}\label{transbound1}
Suppose that $(E,d_{E})$ is a finite metric space and $T:E\to E$ is a map. Let $\mu$ and $\nu$ be probability measures on $E$. Then
\[W_{p}(\mu,\nu)\leq W_{p}(T_{\#}\mu,T_{\#}\nu)+\left(\sum_{x\in E}d_{E}(x,T(x))^{p}\mu(\{x\})\right)^{1/p}+\left(\sum_{x\in E}d_{E}(x,T(x))^{p}\nu(\{x\})\right)^{1/p}.\]
\end{lemma}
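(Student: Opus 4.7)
The plan is to derive the stated inequality as a direct consequence of the triangle inequality for $W_{p}$ together with Lemma \ref{mutmu}. Since $E$ is a finite metric space, every map $T:E\to E$ is automatically measurable, so Lemma \ref{mutmu} applies to both $\mu$ and $\nu$ with this $T$.

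First I would invoke the triangle inequality for $W_{p}$ (which holds on any separable metric space for $p\geq 1$) to write
\[W_{p}(\mu,\nu)\leq W_{p}(\mu,T_{\#}\mu)+W_{p}(T_{\#}\mu,T_{\#}\nu)+W_{p}(T_{\#}\nu,\nu).\]
The middle term is already the one we want. For the two outer terms, I would apply Lemma \ref{mutmu} to get
\[W_{p}(\mu,T_{\#}\mu)\leq\left(\int_{E}d_{E}(x,T(x))^{p}\,d\mu(x)\right)^{1/p}=\left(\sum_{x\in E}d_{E}(x,T(x))^{p}\mu(\{x\})\right)^{1/p},\]
and the analogous bound with $\nu$ in place of $\mu$. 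Combining these three bounds yields the claimed inequality.

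There is no real obstacle here: the only minor subtlety is noting that Lemma \ref{mutmu} was stated for separable metric spaces with $\mu\in\mathcal{P}_{p}(E)$, both conditions being automatic since $E$ is finite. Thus the proof is a three-line triangle inequality argument and does not require any further machinery from the earlier sections.
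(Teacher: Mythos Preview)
Your proposal is correct and matches the paper's own proof essentially line for line: the paper also applies the triangle inequality for $W_{p}$ to get $W_{p}(\mu,\nu)\leq W_{p}(T_{\#}\mu,T_{\#}\nu)+W_{p}(\mu,T_{\#}\mu)+W_{p}(\nu,T_{\#}\nu)$ and then invokes Lemma~\ref{mutmu} on the last two terms.
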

\begin{proof}
Since $W_{p}$ is a metric, we have
\[W_{p}(\mu,\nu)\leq W_{p}(T_{\#}\mu,T_{\#}\nu)+W_{p}(\mu,T_{\#}\mu)+W_{p}(\nu,T_{\#}\nu).\]
Apply Lemma \ref{mutmu} to bound each of the two terms $W_{p}(\mu,T_{\#}\mu)$ and $W_{p}(\nu,T_{\#}\nu)$. The result follows.
\end{proof}
\begin{lemma}\label{basicbound}
Suppose that $(E,d_{E})$ is a finite metric space and $T:E\to E$ is a map. Let $\mu$ and $\nu$ be probability measures on $E$. Then
\[W_{p}^{\$}(\mu,\nu)\leq W_{p}^{\$}(T_{\#}\mu,T_{\#}\nu)+2^{1-1/p}\left(\sum_{x\in E}d_{E}(x,T(x))^{p}|\mu(\{x\})-\nu(\{x\})|\right)^{1/p}.\]
\end{lemma}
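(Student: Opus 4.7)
The plan is to reduce $W_{p}^{\$}(\mu,\nu)$ to an ordinary $W_{p}$ between the ``residual'' measures $\mu':=\mu-\mu\wedge\nu$ and $\nu':=\nu-\mu\wedge\nu$ (which have equal total mass $\mu'(E)=1-(\mu\wedge\nu)(E)$), push everything through $T$ using Lemma~\ref{transbound1}, and then invoke Lemma~\ref{dollar2} to convert the pushforward side back into $W_{p}^{\$}(T_{\#}\mu,T_{\#}\nu)$. Concretely, assume $\mu\neq\nu$ so that $\mu'(E)>0$ (the equality case is trivial). By definition,
\[
W_{p}^{\$}(\mu,\nu)=\mu'(E)^{1/p}\,W_{p}\!\left(\tfrac{\mu'}{\mu'(E)},\tfrac{\nu'}{\mu'(E)}\right),
\]
and applying Lemma~\ref{transbound1} to the probability measures $\mu'/\mu'(E)$ and $\nu'/\mu'(E)$ (then multiplying by $\mu'(E)^{1/p}$) gives
\[
W_{p}^{\$}(\mu,\nu)\le \mu'(E)^{1/p}\,W_{p}\!\left(\tfrac{T_{\#}\mu'}{\mu'(E)},\tfrac{T_{\#}\nu'}{\mu'(E)}\right)+A^{1/p}+B^{1/p},
\]
where $A=\sum_{x} d_{E}(x,T(x))^{p}\mu'(\{x\})$ and $B=\sum_{x} d_{E}(x,T(x))^{p}\nu'(\{x\})$.

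The crucial observation for step two is that $T_{\#}(\mu\wedge\nu)\le (T_{\#}\mu)\wedge(T_{\#}\nu)$: for each $y$, the right side equals $\min(\sum_{x\in T^{-1}(y)}\mu(\{x\}),\sum_{x\in T^{-1}(y)}\nu(\{x\}))$ while the left side equals $\sum_{x\in T^{-1}(y)}\min(\mu(\{x\}),\nu(\{x\}))$. Setting $\zeta=T_{\#}(\mu\wedge\nu)$ and $\eta=(T_{\#}\mu)\wedge(T_{\#}\nu)$ in Lemma~\ref{dollar2} (applied to the probability measures $T_{\#}\mu$ and $T_{\#}\nu$), and using $T_{\#}\mu-\zeta=T_{\#}\mu'$, $T_{\#}\nu-\zeta=T_{\#}\nu'$, and $1-\zeta(E)=\mu'(E)$, yields
\[
\mu'(E)^{1/p}\,W_{p}\!\left(\tfrac{T_{\#}\mu'}{\mu'(E)},\tfrac{T_{\#}\nu'}{\mu'(E)}\right)\le W_{p}^{\$}(T_{\#}\mu,T_{\#}\nu).
\]
Combining the last two displays gives $W_{p}^{\$}(\mu,\nu)\le W_{p}^{\$}(T_{\#}\mu,T_{\#}\nu)+A^{1/p}+B^{1/p}$.

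It remains to collapse $A^{1/p}+B^{1/p}$ into $2^{1-1/p}(A+B)^{1/p}$, which is the usual convexity bound $(A^{1/p}+B^{1/p})^{p}\le 2^{p-1}(A+B)$ following from convexity of $t\mapsto t^{p}$ on $[0,\infty)$. Since $\mu'$ and $\nu'$ are supported on disjoint atoms with $\mu'(\{x\})+\nu'(\{x\})=|\mu(\{x\})-\nu(\{x\})|$, we have $A+B=\sum_{x}d_{E}(x,T(x))^{p}|\mu(\{x\})-\nu(\{x\})|$, which completes the proof.

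The only non-routine step is recognizing that Lemma~\ref{dollar2} must be applied with the suboptimal overlap $\zeta=T_{\#}(\mu\wedge\nu)$ rather than the true overlap $(T_{\#}\mu)\wedge(T_{\#}\nu)$—this is what lets the ``$\mu'$, $\nu'$ bookkeeping'' survive under pushforward. The constant $2^{1-1/p}$ arises purely from the final convexity step and is sharp in general (matching the case $A=B$).
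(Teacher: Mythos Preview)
Your proof is correct and follows essentially the same route as the paper's own argument: apply Lemma~\ref{transbound1} to the normalized residuals $(\mu-\mu\wedge\nu)/(1-(\mu\wedge\nu)(E))$ and $(\nu-\mu\wedge\nu)/(1-(\mu\wedge\nu)(E))$, use the key inequality $T_{\#}(\mu\wedge\nu)\le(T_{\#}\mu)\wedge(T_{\#}\nu)$ together with Lemma~\ref{dollar2} to convert the pushforward term into $W_{p}^{\$}(T_{\#}\mu,T_{\#}\nu)$, and collapse the two moment terms via $a^{1/p}+b^{1/p}\le 2^{1-1/p}(a+b)^{1/p}$ and the identity $\mu'(\{x\})+\nu'(\{x\})=|\mu(\{x\})-\nu(\{x\})|$. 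The only cosmetic difference is notation ($\mu',\nu'$ versus the paper's $\mu-\zeta,\nu-\zeta$).
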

\begin{proof}
Let $\zeta=\mu\wedge\nu$. By Lemma \ref{transbound1},
\begin{align*}
&W_{p}\left(\frac{\mu-\zeta}{1-\zeta(E)},\frac{\nu-\zeta}{1-\zeta(E)}\right)\\\leq&
W_{p}\left(T_{\#}\left(\frac{\mu-\zeta}{1-\zeta(E)}\right),T_{\#}\left(\frac{\nu-\zeta}{1-\zeta(E)}\right)\right)+\left(\sum_{x\in E}d_{E}(x,T(x))^{p}\cdot\frac{\mu(\{x\})-\zeta(\{x\})}{1-\zeta(E)}\right)^{1/p}\\&
+\left(\sum_{x\in E}d_{E}(x,T(x))^{p}\cdot\frac{\nu(\{x\})-\zeta(\{x\})}{1-\zeta(E)}\right)^{1/p}.
\end{align*}
So multiplying this inequality by $(1-\zeta(E))^{1/p}$ and using the definition of $W_{p}^{\$}$, we obtain
\begin{align*}
&W_{p}^{\$}(\mu,\nu)\\=&
(1-\zeta(E))^{\frac{1}{p}}W_{p}\left(\frac{\mu-\zeta}{1-\zeta(E)},\frac{\nu-\zeta}{1-\zeta(E)}\right)\\\leq&
(1-\zeta(E))^{\frac{1}{p}}W_{p}\left(\frac{T_{\#}\mu-T_{\#}\zeta}{1-\zeta(E)},\frac{T_{\#}\nu-T_{\#}\zeta}{1-\zeta(E)}\right)+\left(\sum_{x\in E}d_{E}(x,T(x))^{p}\cdot(\mu(\{x\})-\zeta(\{x\}))\right)^{1/p}\\&
+\left(\sum_{x\in E}d_{E}(x,T(x))^{p}\cdot(\nu(\{x\})-\zeta(\{x\}))\right)^{1/p}\\\leq&
(1-\zeta(E))^{\frac{1}{p}}W_{p}\left(\frac{T_{\#}\mu-T_{\#}\zeta}{1-\zeta(E)},\frac{T_{\#}\nu-T_{\#}\zeta}{1-\zeta(E)}\right)\\&
+2^{1-1/p}\left(\sum_{x\in E}d_{E}(x,T(x))^{p}\cdot(\mu(\{x\})+\nu(\{x\})-2\zeta(\{x\}))\right)^{\frac{1}{p}},
\end{align*}
where in the last inequality, we use the fact that $a^{1/p}+b^{1/p}\leq2^{1-1/p}(a+b)^{1/p}$. Since $\zeta=\mu\wedge\nu$, we have
\begin{eqnarray*}
\mu(\{x\})+\nu(\{x\})-2\zeta(\{x\})&=&\mu(\{x\})+\nu(\{x\})-2\min(\mu(\{x\}),\nu(\{x\}))\\&=&
|\mu(\{x\})-\nu(\{x\})|,
\end{eqnarray*}
for every $x\in E$. It follows that
\begin{eqnarray*}
W_{p}^{\$}(\mu,\nu)&\leq&
(1-\zeta(E))^{\frac{1}{p}}W_{p}\left(\frac{T_{\#}\mu-T_{\#}\zeta}{1-\zeta(E)},\frac{T_{\#}\nu-T_{\#}\zeta}{1-\zeta(E)}\right)\\&&
+2^{1-1/p}\left(\sum_{x\in E}d_{E}(x,T(x))^{p}\cdot|\mu(\{x\})-\nu(\{x\})|\right)^{\frac{1}{p}}.
\end{eqnarray*}
It remains to show that
\begin{equation}\label{basicboundeq1}
(1-\zeta(E))^{\frac{1}{p}}W_{p}\left(\frac{T_{\#}\mu-T_{\#}\zeta}{1-\zeta(E)},\frac{T_{\#}\nu-T_{\#}\zeta}{1-\zeta(E)}\right)\leq W_{p}^{\$}(T_{\#},T_{\#}\nu).
\end{equation}
Since $\zeta\leq\mu$ and $\zeta\leq\nu$, we have $T_{\#}\zeta\leq T_{\#}\mu$ and $T_{\#}\zeta\leq T_{\#}\nu$. So $T_{\#}\zeta\leq(T_{\#}\mu)\wedge(T_{\#}\nu)$. Thus, since $\zeta(E)=(T_{\#}\zeta)(E)$, by Lemma \ref{dollar2},
\begin{align*}
&(1-\zeta(E))^{\frac{1}{p}}W_{p}\left(\frac{T_{\#}\mu-T_{\#}\zeta}{1-\zeta(E)},\frac{T_{\#}\nu-T_{\#}\zeta}{1-\zeta(E)}\right)\\=&
(1-(T_{\#}\zeta)(E))^{\frac{1}{p}}W_{p}\left(\frac{T_{\#}\mu-T_{\#}\zeta}{1-(T_{\#}\zeta)(E)},\frac{T_{\#}\nu-T_{\#}\zeta}{1-(T_{\#}\zeta)(E)}\right)\\\leq&
(1-[(T_{\#}\mu)\wedge(T_{\#}\nu)](E))^{\frac{1}{p}}W_{p}\left(\frac{T_{\#}\mu-[(T_{\#}\mu)\wedge(T_{\#}\nu)]}{1-[(T_{\#}\mu)\wedge(T_{\#}\nu)](E)},\frac{T_{\#}\nu-[(T_{\#}\mu)\wedge(T_{\#}\nu)]}{1-[(T_{\#}\mu)\wedge(T_{\#}\nu)](E)}\right)\\=&
W_{p}^{\$}(T_{\#}\mu,T_{\#}\nu).
\end{align*}
This proves (\ref{basicboundeq1}) and the result follows.
\end{proof}
\begin{remark}
When $p=1$, we have $W_{1}^{\$}(\mu,\nu)=W_{1}(\mu,\nu)$ as noted in Remark \ref{p1remarkdollar}. Lemma \ref{basicbound} has a much shorter proof when $p=1$:
\begin{align*}
&W_{1}(\mu,\nu)-W_{1}(T_{\#}\mu,T_{\#}\nu)\\=&
\sup_{\|f\|_{\mathrm{Lip}}\leq 1}\left(\int_{E}f\,d\mu-\int_{E}f\,d\nu\right)
-\sup_{\|f\|_{\mathrm{Lip}}\leq 1}\left(\int_{E}f\circ T\,d\mu-\int_{E}f\circ T\,d\nu\right)\\\leq&
\sup_{\|f\|_{\mathrm{Lip}}\leq 1}\left(\int_{E}f\,d\mu-\int_{E}f\circ T\,d\mu-\int_{E}f\,d\nu+\int_{E}f\circ T\,d\nu\right)\\=&
\sup_{\|f\|_{\mathrm{Lip}}\leq 1}\left(\sum_{x\in E}[f(x)-f(T(x))]\cdot[\mu(\{x\})-\nu(\{x\})]\right)\\\leq&
\sum_{x\in E}d_{E}(x,T(x))|\mu(\{x\})-\nu(\{x\})|,
\end{align*}
where the suprema above are over all $f:E\to\mathbb{R}$ with $\|f\|_{\mathrm{Lip}}\leq 1$.
\end{remark}
\begin{lemma}\label{chainingwp}
Suppose that $(E,d_{E})$ is a finite metric space, $y_{1},\ldots,y_{2r}\in E$ are distinct and $z_{1},\ldots,z_{r}\in E$. Let $\mu=\frac{1}{2r}\sum_{i=1}^{2r}\delta_{y_{i}}$ and $\widetilde{\mu}=\frac{1}{r}\sum_{i=1}^{r}\delta_{z_{i}}$. Then
\[\|W_{p}^{\$}(\mu_{n},\mu)\|_{L^{p}}\leq
\|W_{p}^{\$}(\widetilde{\mu}_{n},\widetilde{\mu})\|_{L^{p}}+2^{1-1/p}\left(\frac{2r}{n}\right)^{1/(2p)}\cdot W_{p}(\mu,\widetilde{\mu}).\]
\end{lemma}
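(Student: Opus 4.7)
The plan is to construct a map $T: E \to E$ satisfying $T_{\#}\mu = \widetilde{\mu}$ and then apply Lemma \ref{basicbound}. The key ingredient is a doubling trick: I rewrite $\widetilde{\mu} = \frac{1}{2r}\sum_{j=1}^{2r}\delta_{z'_j}$, where $z'_{2j-1} = z'_{2j} = z_j$, so that both $\mu$ and $\widetilde{\mu}$ are uniform measures on $2r$ (possibly repeated) atoms. Applied to these two uniform-on-$2r$-atoms measures, Lemma \ref{wppermute} produces a permutation $\tau$ of $\{1,\ldots,2r\}$ realizing the optimal matching, i.e., such that $\sum_{i=1}^{2r} d_E(y_i, z'_{\tau(i)})^p = 2r\cdot W_p(\mu,\widetilde{\mu})^p$. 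Define $T(y_i) = z'_{\tau(i)}$ for $1 \leq i \leq 2r$ and extend $T$ arbitrarily outside $\{y_1,\ldots,y_{2r}\}$. Since each $z_j$ is the image of exactly two distinct $y_i$'s (from the duplication), $T_{\#}\mu = \widetilde{\mu}$.

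Applying Lemma \ref{basicbound} with this $T$ to the pair $(\mu_n,\mu)$ gives
\[ W_p^{\$}(\mu_n,\mu) \leq W_p^{\$}(T_{\#}\mu_n, T_{\#}\mu) + 2^{1-1/p}\left(\sum_{x \in E} d_E(x,T(x))^p\,|\mu_n(\{x\}) - \mu(\{x\})|\right)^{1/p}. \]
For the first term, Lemma \ref{emptrans} shows that $T_{\#}\mu_n$ is itself an empirical measure of $T_{\#}\mu = \widetilde{\mu}$ of sample size $n$, so $T_{\#}\mu_n$ is equidistributed with $\widetilde{\mu}_n$; taking $L^p$ norms yields $\|W_p^{\$}(T_{\#}\mu_n, T_{\#}\mu)\|_{L^p} = \|W_p^{\$}(\widetilde{\mu}_n, \widetilde{\mu})\|_{L^p}$, which is the first term on the right-hand side of the target inequality.

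For the second term I would take $L^p$ norms and use $\|Z^{1/p}\|_{L^p}^{p} = \mathbb{E}Z$ for $Z\geq 0$, reducing matters to bounding $\mathbb{E}\sum_{x} d_E(x,T(x))^p |\mu_n(\{x\}) - \mu(\{x\})|$. Only the atoms $y_i$ of $\mu$ contribute. Since $n\mu_n(\{y_i\}) \sim \mathrm{Binomial}(n, 1/(2r))$, the centered variable $\mu_n(\{y_i\}) - \mu(\{y_i\})$ has mean zero and variance at most $\mu(\{y_i\})/n$, so by Jensen/Cauchy--Schwarz, $\mathbb{E}|\mu_n(\{y_i\}) - \mu(\{y_i\})| \leq \sqrt{\mu(\{y_i\})/n} = 1/\sqrt{2rn}$. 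Combining with the identity $\sum_{i=1}^{2r} d_E(y_i,T(y_i))^p = 2r\,W_p(\mu,\widetilde{\mu})^p$ from the first paragraph bounds the expectation by $\sqrt{2r/n}\cdot W_p(\mu,\widetilde{\mu})^p$, and taking a $p$-th root yields the factor $(2r/n)^{1/(2p)}\cdot W_p(\mu,\widetilde{\mu})$, giving the second term. The only real technicality is the normalization bookkeeping: the doubling trick is exactly what aligns the per-atom variance bound $\sqrt{\mu(\{y_i\})/n}$ with the sum over $2r$ atoms so that the $p$-th root produces the clean exponent $1/(2p)$ on $2r/n$; any other pairing scheme would yield the wrong exponent.
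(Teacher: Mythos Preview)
Your proof is correct and follows essentially the same approach as the paper's: duplicate the $z_j$'s to write $\widetilde{\mu}$ as a uniform measure on $2r$ atoms, invoke Lemma~\ref{wppermute} to get an optimal permutation, define $T$ accordingly so that $T_{\#}\mu=\widetilde{\mu}$, apply Lemma~\ref{basicbound}, and then bound $\mathbb{E}|\mu_n(\{y_i\})-\mu(\{y_i\})|$ by the square root of the binomial variance $\mu(\{y_i\})/n=1/(2rn)$. The only cosmetic difference is that the paper indexes the duplication as $z_{i+r}=z_i$ and sets $T(x)=x$ off the support of $\mu$, whereas you interleave $z'_{2j-1}=z'_{2j}=z_j$ and extend $T$ arbitrarily; since $\mu$ and $\mu_n$ are supported on $\{y_1,\ldots,y_{2r}\}$, the extension is irrelevant. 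One small wording issue: your phrase ``each $z_j$ is the image of exactly two distinct $y_i$'s'' tacitly assumes the $z_j$ are distinct, but the conclusion $T_{\#}\mu=\widetilde{\mu}$ holds regardless, simply because $T_{\#}\mu=\frac{1}{2r}\sum_i\delta_{z'_{\tau(i)}}=\frac{1}{2r}\sum_k\delta_{z'_k}=\widetilde{\mu}$.
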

\begin{proof}
For $1\leq i\leq r$, define $z_{i+r}=z_{i}$. Then $\widetilde{\mu}=\frac{1}{r}\sum_{i=1}^{r}\delta_{z_{i}}=\frac{1}{2r}\sum_{i=1}^{2r}\delta_{z_{i}}$. By Lemma \ref{wppermute}, there exists a permutation $\sigma:\{1,\ldots,2r\}\to\{1,\ldots,2r\}$ such that
\begin{equation}\label{chainingwpeq1}
W_{p}(\mu,\widetilde{\mu})=\left(\frac{1}{2r}\sum_{i=1}^{2r}d_{E}(y_{i},z_{\sigma(i)})^{p}\right)^{1/p}.
\end{equation}
Define $T:E\to E$ by
\begin{equation}\label{chainingwpeq2}
T(x)=\begin{cases}z_{\sigma(i)},&x=y_{i}\text{ for some }i\\x,&\text{Otherwise}\end{cases},
\end{equation}
which is well defined since $y_{1},\ldots,y_{2r}$ are distinct. Note that
\[T_{\#}\mu=\frac{1}{2r}\sum_{i=1}^{2r}\delta_{T(y_{i})}=\frac{1}{2r}\sum_{i=1}^{2r}\delta_{z_{\sigma(i)}}=\frac{1}{2r}\sum_{i=1}^{2r}\delta_{z_{i}}=\widetilde{\mu}.\]
Thus,
\begin{equation}\label{chainingwpeq3}
\mathbb{E}[W_{p}^{\$}(T_{\#}\mu_{n},T_{\#}\mu)^{p}]=\mathbb{E}[W_{p}^{\$}(T_{\#}\mu_{n},\widetilde{\mu})^{p}]=\mathbb{E}[W_{p}^{\$}(\widetilde{\mu}_{n},\widetilde{\mu})^{p}],
\end{equation}
where the second equality follows from the fact that $T_{\#}\mu_{n}$ is an empirical measure of $T_{\#}\mu=\widetilde{\mu}$ by Lemma \ref{emptrans}. By Lemma \ref{basicbound},
\begin{align*}
&\|W_{p}^{\$}(\mu_{n},\mu)\|_{L^{p}}\\\leq&
\|W_{p}^{\$}(T_{\#}\mu_{n},T_{\#}\mu)\|_{L^{p}}+
2^{1-1/p}\left(\sum_{x\in E}d_{E}(x,T(x))^{p}\cdot\mathbb{E}|\mu_{n}(\{x\})-\mu(\{x\})|\right)^{1/p}\\=&
\|W_{p}^{\$}(\widetilde{\mu}_{n},\widetilde{\mu})\|_{L^{p}}+
2^{1-1/p}\left(\sum_{i=1}^{2r}d_{E}(y_{i},z_{\sigma(i)})^{p}\cdot\mathbb{E}|\mu_{n}(\{y_{i}\})-\mu(\{y_{i}\})|\right)^{1/p},
\end{align*}
where the last step follows from (\ref{chainingwpeq3}) and (\ref{chainingwpeq2}). Since $\mu(\{y_{i}\})=\frac{1}{2r}$ for each $i$, by the definition of the empirical measure $\mu_{n}$, we have
\[\mathbb{E}|\mu_{n}(\{y_{i}\})-\mu(\{y_{i}\})|^{2}=\frac{1}{n}\left(\frac{1}{2r}\right)\left(1-\frac{1}{2r}\right)\leq\frac{1}{2nr}.\]
Therefore,
\begin{eqnarray*}
\|W_{p}^{\$}(\mu_{n},\mu)\|_{L^{p}}&\leq&
\|W_{p}^{\$}(\widetilde{\mu}_{n},\widetilde{\mu})\|_{L^{p}}
+2^{1-1/p}\left(\frac{1}{\sqrt{2nr}}\sum_{i=1}^{2r}d_{E}(y_{i},z_{\sigma(i)})^{p}\right)^{1/p}\\&=&
\|W_{p}^{\$}(\widetilde{\mu}_{n},\widetilde{\mu})\|_{L^{p}}
+2^{1-1/p}\left(\frac{2r}{n}\right)^{1/(2p)}\cdot W_{p}(\mu,\widetilde{\mu}),
\end{eqnarray*}
where the last step follows from (\ref{chainingwpeq1}).
\end{proof}
\begin{proof}[Proof of  Theorem \ref{main2}]
Without loss of generality, by enlarging the metric space $E$ (e.g., replace $E$ by $E\times[0,1]$), we may assume that $E$ has no isolated points, since when $E$ is enlarged, $W_{p}(\mu_{n},\mu)$ does not change, while the $b_{2^{k},p}(\mu)$ could only get smaller.

For $r\in\mathbb{N}$, define $\mathcal{D}(E,r)$ to be the set of all probability measures on $E$ of the form $\frac{1}{r}\sum_{i=1}^{r}\delta_{x_{i}}$ for some distinct $x_{1},\ldots,x_{r}\in E$. Observe that since $E$ has no isolated points, any list of points $x_{1},\ldots,x_{r}\in E$ can be perturbed so that they are distinct, so
\begin{equation}\label{brpaltd}
b_{r,p}(\mu)=\inf_{x_{1},\ldots,x_{r}\in E}W_{p}\left(\mu,\frac{1}{r}\sum_{i=1}^{r}\delta_{x_{i}}\right)=\inf_{\nu\in\mathcal{D}(E,r)}W_{p}(\mu,\nu),
\end{equation}
for all $r\in\mathbb{N}$.

Let $k_{0}=\lfloor\log_{2}n\rfloor$. For each $0\leq k\leq k_{0}$, fix $\mu^{(k)}\in\mathcal{D}(E,2^{k})$ and let $\mu_{n}^{(k)}$ be the empirical measure of $\mu^{(k)}$ with sample size $n$.

We pause here for a technical note. Let $E_{0}$ be the union of all the supports of the measures $\mu^{(k)}$ for $0\leq k\leq k_{0}$. Then $E_{0}$ is a finite set and all the $\mu^{(k)}$ and $\mu_{n}^{(k)}$ are supported on $E_{0}$.  Since the notion $W_{p}^{\$}$ in Definition \ref{dollardef} is defined only for probability measures on a finite metric space, while the metric space $E$ here is infinite, in what follows, for probability measures $\nu$ and $\zeta$ on $E$ that are supported on $E_{0}$, we write $W_{p}^{\$}(\nu,\zeta,E_{0})$ to mean $W_{p}^{\$}(\nu,\zeta)$ where we treat $\nu$ and $\zeta$ as probability measures on $E_{0}$.

Since $\mu^{(k+1)}\in\mathcal{D}(E,2^{k+1})$ and $\mu^{(k)}\in\mathcal{D}(E,2^{k})$, by Lemma \ref{chainingwp} with $r=2^{k}$, we have
\begin{align*}
&\|W_{p}^{\$}(\mu_{n}^{(k+1)},\mu^{(k+1)},E_{0})\|_{L^{p}}-\|W_{p}^{\$}(\mu_{n}^{(k)},\mu^{(k)},E_{0})\|_{L^{p}}\\\leq&
2^{1-1/p}\left(\frac{2^{k+1}}{n}\right)^{1/(2p)}\cdot W_{p}(\mu^{(k+1)},\mu^{(k)}),
\end{align*}
for all $0\leq k\leq k_{0}-1$. Summing over $0\leq k\leq k_{0}-1$, we obtain
\begin{align*}
&\|W_{p}^{\$}(\mu_{n}^{(k_{0})},\mu^{(k_{0})},E_{0})\|_{L^{p}}-
\|W_{p}^{\$}(\mu_{n}^{(0)},\mu^{(0)},E_{0})\|_{L^{p}}
\\\leq&\sum_{k=0}^{k_{0}-1}2^{1-1/p}\left(\frac{2^{k+1}}{n}\right)^{1/(2p)}\cdot W_{p}(\mu^{(k+1)},\mu^{(k)}).
\end{align*}
Observe that since $\mu^{(0)}\in\mathcal{D}(E,1)$ is supported on one point, we have $\mu_{n}^{(0)}=\mu^{(0)}$ so $W_{p}^{\$}(\mu_{n}^{(0)},\mu^{(0)},E_{0})=0$. Moreover, since $W_{p}^{\$}$ is always at least $W_{p}$ (see Remark \ref{wpdollarineq}), we have $\|W_{p}(\mu_{n}^{(k_{0})},\mu^{(k_{0})})\|_{L^{p}}\leq\|W_{p}^{\$}(\mu_{n}^{(k_{0})},\mu^{(k_{0})},E_{0})\|_{L^{p}}$. Therefore,
\begin{align*}
&\|W_{p}(\mu_{n}^{(k_{0})},\mu^{(k_{0})})\|_{L^{p}}\\\leq&
2^{1-1/p}\sum_{k=0}^{k_{0}-1}\left(\frac{2^{k+1}}{n}\right)^{1/(2p)}\cdot\left[W_{p}(\mu^{(k+1)},\mu)+W_{p}(\mu^{(k)},\mu)\right]\\=&
2^{1-1/p}\sum_{k=1}^{k_{0}}\left(\frac{2^{k}}{n}\right)^{1/(2p)}\cdot W_{p}(\mu^{(k)},\mu)+
2^{1-1/p}\sum_{k=0}^{k_{0}-1}\left(\frac{2^{k+1}}{n}\right)^{1/(2p)}\cdot W_{p}(\mu^{(k)},\mu)\\\leq&
2\left(\frac{2^{k_{0}}}{n}\right)^{1/(2p)}\cdot W_{p}(\mu^{(k_{0})},\mu)+
4\cdot\sum_{k=0}^{k_{0}-1}\left(\frac{2^{k}}{n}\right)^{1/(2p)}\cdot W_{p}(\mu^{(k)},\mu),
\end{align*}
where in the second step, we split the sum into two sums and then re-index the first sum, and in the last step, we keep the $k=k_{0}$ term in the first sum and then combine the other terms $1\leq k\leq k_{0}-1$ in the first sum with the second sum. But by Lemma \ref{easybound},
\[\|W_{p}(\mu_{n},\mu)\|_{L^{p}}\leq
\|W_{p}(\mu_{n}^{(k_{0})},\mu^{(k_{0})})\|_{L^{p}}+
2W_{p}(\mu,\mu^{(k_{0})}).\]
Therefore,
\[\|W_{p}(\mu_{n},\mu)\|_{L^{p}}\leq
\left(2+2\left(\frac{2^{k_{0}}}{n}\right)^{1/(2p)}\right)W_{p}(\mu^{(k_{0})},\mu)+4\cdot\sum_{k=0}^{k_{0}-1}\left(\frac{2^{k}}{n}\right)^{1/(2p)}\cdot W_{p}(\mu^{(k)},\mu).\]
Since $k_{0}=\lfloor\log_{2}n\rfloor\geq-1+\log_{2}n$, we have $\frac{n}{2^{k_{0}}}\leq 2$ and so the coefficient of the first term on the right hand side
\[2+2\left(\frac{2^{k_{0}}}{n}\right)^{1/(2p)}=\left\{2\left(\frac{n}{2^{k_{0}}}\right)^{1/(2p)}+2\right\}\left(\frac{2^{k_{0}}}{n}\right)^{1/(2p)}\leq(2\sqrt{2}+2)\cdot\left(\frac{2^{k_{0}}}{n}\right)^{1/(2p)}.\]
Thus, since $4\leq2\sqrt{2}+2$, it follows that
\[\|W_{p}(\mu_{n},\mu)\|_{L^{p}}\leq(2\sqrt{2}+2)\cdot\sum_{k=0}^{k_{0}}\left(\frac{2^{k}}{n}\right)^{1/(2p)}\cdot W_{p}(\mu^{(k)},\mu).\]
Taking infimum over all $\mu^{(k)}\in\mathcal{D}(E,2^{k})$, for $0\leq k\leq k_{0}$, and using (\ref{brpaltd}) completes the proof.
\end{proof}
\section{Proof of the first main result}
In this section, we prove the first main result Theorem \ref{main1}. Since the upper bound in Theorem \ref{main1} follows immediately from Theorem \ref{main2}, which has been proved in the previous section, it suffices to prove the lower bound in Theorem \ref{main1}. Observe that if we  apply Lemma \ref{basicstability} with $m=2^{k}$, we obtain
\[\mathbb{E}[W_{1}(\mu_{n},\mu)]\geq\max_{0\leq k\leq\lfloor\log_{2}n\rfloor}\frac{2^{k}}{n}\cdot\mathbb{E}[W_{1}(\mu,\mu_{2^{k}})]\geq
\max_{0\leq k\leq\lfloor\log_{2}n\rfloor}\frac{2^{k}}{n}\cdot b_{2^{k},1}(\mu).\]
This does not suffice to prove the lower bound in Theorem \ref{main1} due to the missing square root on the coefficient $\frac{2^{k}}{n}$. The needed square root on $\frac{2^{k}}{n}$ can be obtained by using Lemma \ref{w1lb} below at the cost of a log factor. Before we can prove Lemma \ref{w1lb}, we need several lemmas, namely, Lemma \ref{Hoeffding} to Lemma \ref{w1prelb}.
\begin{lemma}[Section 2.2 in \cite{Romanbook}, Hoeffding inequality]\label{Hoeffding}
Suppose that $s_{1},\ldots,s_{m}$ are independent Rademacher random variables (i.e., $\mathbb{P}(s_{i}=1)=\mathbb{P}(s_{i}=-1)=\frac{1}{2}$) and $a_{1},\ldots,a_{m}\in\mathbb{R}$. Then
\[\mathbb{P}\left(\sum_{i=1}^{m}s_{i}a_{i}\geq t\right)\leq\exp\left(-\frac{t^{2}}{2\sum_{i=1}^{m}a_{i}^{2}}\right),\]
for all $t\geq 0$.
\end{lemma}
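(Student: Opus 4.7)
The plan is to prove Hoeffding's inequality by the standard Chernoff/moment generating function method, since the statement is classical and the author quotes it from a textbook. I would first bound the exponential moment of each $s_i a_i$ using the elementary inequality $\cosh(x) \leq e^{x^2/2}$, then use independence to control the moment generating function of the sum, and finally optimize the Chernoff parameter.

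Concretely, the first step is to fix $\lambda > 0$ and apply Markov's inequality to $\exp(\lambda \sum_{i=1}^m s_i a_i)$:
\[
\mathbb{P}\!\left(\sum_{i=1}^{m}s_{i}a_{i}\geq t\right)
= \mathbb{P}\!\left(e^{\lambda\sum_{i=1}^m s_i a_i}\geq e^{\lambda t}\right)
\leq e^{-\lambda t}\,\mathbb{E}\!\left[e^{\lambda\sum_{i=1}^{m}s_{i}a_{i}}\right].
\]
Next, using independence of the $s_i$, the expectation factors as $\prod_{i=1}^m \mathbb{E}[e^{\lambda s_i a_i}]$, and for each $i$ we have $\mathbb{E}[e^{\lambda s_i a_i}] = \cosh(\lambda a_i)$. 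The key analytic input is the pointwise bound
\[
\cosh(x) = \sum_{k=0}^{\infty}\frac{x^{2k}}{(2k)!} \leq \sum_{k=0}^{\infty}\frac{x^{2k}}{2^k k!} = e^{x^2/2},
\]
which one proves by comparing $(2k)! \geq 2^k k!$ term by term. Combining the two preceding displays,
\[
\mathbb{P}\!\left(\sum_{i=1}^{m}s_{i}a_{i}\geq t\right)
\leq \exp\!\left(-\lambda t + \frac{\lambda^2}{2}\sum_{i=1}^{m}a_i^2\right).
\]

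The final step is to minimize the right-hand side in $\lambda > 0$. The exponent is a quadratic in $\lambda$ with minimum at $\lambda^* = t/\sum_{i=1}^m a_i^2$ (assuming not all $a_i$ vanish, in which case the bound is trivial), and plugging this in gives exactly the claimed estimate $\exp\bigl(-t^2/(2\sum_{i=1}^m a_i^2)\bigr)$. There is no genuine obstacle here: the only nontrivial ingredient is the cosh bound, which is a one-line Taylor series comparison, and the rest is a routine Chernoff argument. Because the result is stated with a reference to a textbook, I would present this proof compactly (or omit it and refer the reader to \cite{Romanbook}) rather than expand on it.
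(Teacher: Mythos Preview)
Your proof is correct and is exactly the standard Chernoff argument for Rademacher sums. The paper itself does not prove this lemma at all; it merely quotes it with a reference to \cite{Romanbook}, so there is no ``paper's own proof'' to compare against, and your suggestion to either present the argument compactly or simply cite the textbook matches what the author actually did.
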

\begin{lemma}\label{Lipbound}
Suppose that $E$ is a finite metric space. Let $f_{i}:E\to\mathbb{R}$, for $1\leq i\leq m$, be such that $\|f_{i}\|_{\mathrm{Lip}}\leq 1$. Let $s_{1},\ldots,s_{m}$ be independent Rademacher random variables. Then
\[\mathbb{E}\left\|\sum_{i=1}^{m}s_{i}f_{i}\right\|_{\mathrm{Lip}}^{2}\leq14m\ln|E|.\]
\end{lemma}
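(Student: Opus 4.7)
The plan is to apply Hoeffding's inequality pointwise to each pair of points in $E$, then combine via a union bound and integrate the tail to obtain the expectation.

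First I would unwind the definition of the Lipschitz seminorm. Setting $g = \sum_{i=1}^m s_i f_i$, we have
\[
\|g\|_{\mathrm{Lip}} = \max_{\substack{x,y\in E \\ x\neq y}} \frac{|g(x)-g(y)|}{d_E(x,y)} = \max_{\substack{x,y\in E \\ x\neq y}}\left|\sum_{i=1}^m s_i\cdot\frac{f_i(x)-f_i(y)}{d_E(x,y)}\right|.
\]
For each fixed ordered pair $(x,y)$ with $x\neq y$, set $a_i(x,y) = (f_i(x)-f_i(y))/d_E(x,y)$. The assumption $\|f_i\|_{\mathrm{Lip}}\leq 1$ gives $|a_i(x,y)|\leq 1$, hence $\sum_i a_i(x,y)^2 \leq m$.

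Next I would apply Lemma \ref{Hoeffding} (together with its symmetric counterpart, obtained by replacing $s_i$ with $-s_i$) to conclude that
\[
\mathbb{P}\left(\left|\sum_{i=1}^m s_i a_i(x,y)\right|\geq t\right) \leq 2\exp\!\left(-\frac{t^2}{2m}\right),\qquad t\geq 0.
\]
Writing $M = \|g\|_{\mathrm{Lip}}$ and $N$ for the number of ordered pairs $(x,y)$ with $x\neq y$ (so $N\leq |E|^2$), a union bound yields $\mathbb{P}(M\geq t)\leq 2N\exp(-t^2/(2m))$ for all $t\geq 0$ (the case $|E|\leq 1$ being trivial with $M=0$).

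Finally, I would convert this tail bound into a bound on $\mathbb{E}M^2$ by the layer-cake formula
\[
\mathbb{E}M^2 = \int_0^\infty \mathbb{P}(M^2>u)\,du \leq T + \int_T^\infty 2N\exp\!\left(-\frac{u}{2m}\right)du = T + 4mN\exp\!\left(-\frac{T}{2m}\right),
\]
and optimize by choosing $T = 2m\ln(4N)$, which gives $\mathbb{E}M^2 \leq 2m\ln(4N) + m \leq 2m(\ln 4 + 2\ln|E|) + m$. A quick check shows that for $|E|\geq 2$ this is bounded by $14m\ln|E|$ since $2\ln 4 + 1 \leq 10\ln 2$. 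The only step that needs a little care is the bookkeeping on constants at the end; everything else is standard subgaussian machinery.
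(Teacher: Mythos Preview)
Your proposal is correct and follows essentially the same approach as the paper: Hoeffding on each pair, union bound over $\leq |E|^2$ pairs, then layer-cake integration of the tail. The only cosmetic differences are that the paper writes the Lipschitz seminorm as a one-sided maximum over ordered pairs (so a one-sided Hoeffding with factor $|E|^2$ suffices, rather than your two-sided $2N$), and it splits the integral at $\ln|E|$ after a substitution instead of optimizing $T$; both routes land comfortably under $14m\ln|E|$.
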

\begin{proof}
For all $x,y\in E$ with $x\neq y$, we have $\left|\frac{f_{i}(x)-f_{i}(y)}{d_{E}(x,y)}\right|\leq\|f_{i}\|_{\mathrm{Lip}}\leq 1$ for each $i$, so by Lemma \ref{Hoeffding},
\[\mathbb{P}\left(\sum_{i=1}^{m}s_{i}\cdot\frac{f_{i}(x)-f_{i}(y)}{d_{E}(x,y)}\geq t\right)\leq\exp\left(-\frac{t^{2}}{2m}\right),\]
for all $t\geq 0$ and $x,y\in E$ with $x\neq y$. Taking a union bound, we have
\begin{align*}
&\mathbb{P}\left(\max_{\substack{x,y\in E\\x\neq y}}\sum_{i=1}^{m}s_{i}\cdot\frac{f_{i}(x)-f_{i}(y)}{d_{E}(x,y)}\geq t\right)\leq|E|^{2}\exp\left(-\frac{t^{2}}{2m}\right)\\=&
\mathbb{P}\left(\left\|\sum_{i=1}^{m}s_{i}f_{i}\right\|_{\mathrm{Lip}}\geq t\right),
\end{align*}
for all $t\geq 0$. So
\begin{eqnarray*}
\mathbb{E}\left\|\sum_{i=1}^{m}s_{i}f_{i}\right\|_{\mathrm{Lip}}^{2}&=&
\int_{0}^{\infty}\mathbb{P}\left(\left\|\sum_{i=1}^{m}s_{i}f_{i}\right\|_{\mathrm{Lip}}\geq\sqrt{t}\right)\,dt\\&\leq&
\int_{0}^{\infty}\min\left(1,\,|E|^{2}\exp\left(-\frac{t}{2m}\right)\right)\,dt\\&=&
8m\cdot\int_{0}^{\infty}\min(1,\,|E|^{2}e^{-4t})\,dt\\&\leq&
8m\cdot\int_{0}^{\ln|E|}1\,dt+8m\cdot\int_{\ln|E|}^{\infty}\,|E|^{2}e^{-4t}\,dt.
\end{eqnarray*}
For $t\geq\ln|E|$, we have $|E|^{2}\leq e^{2t}$ and so $|E|^{2}e^{-4t}\leq e^{-2t}$. Therefore,
\[\mathbb{E}\left\|\sum_{i=1}^{m}s_{i}f_{i}\right\|_{\mathrm{Lip}}^{2}\leq 8m\ln|E|+8m\cdot\int_{0}^{\infty}e^{-2t}\,dt\leq 14m\ln|E|.\]
\end{proof}
\begin{lemma}[\cite{LO}]\label{kk}
If $(V,\|\,\|)$ is a normed space, $L_{1},\ldots,L_{m}\in V$ and $s_{1},\ldots,s_{m}$ are independent Rademacher random variables, then
\[\left(\mathbb{E}\left\|\sum_{i=1}^{m}s_{i}L_{i}\right\|^{2}\right)^{\frac{1}{2}}\leq\sqrt{2}\cdot\mathbb{E}\left\|\sum_{i=1}^{m}s_{i}L_{i}\right\|.\]
\end{lemma}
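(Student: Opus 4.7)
The plan is to invoke the sharp Khintchine--Kahane inequality proved in \cite{LO}, which is exactly the statement in the lemma. Setting $X = \|\sum_{i=1}^{m} s_i L_i\|$, the target squared reads $\mathbb{E}[X^2] \leq 2(\mathbb{E} X)^2$, equivalently $\mathrm{Var}(X) \leq (\mathbb{E} X)^2$. For scalar $L_i$ in place of vectors, this reduces to the classical Khintchine inequality with Szarek's sharp constant $\sqrt{2}$; the content of the lemma is the extension of that same constant to arbitrary normed-space valued Rademacher sums, using only the convexity of $v \mapsto \|v\|$.

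The natural first strategy is induction on $m$. Condition on $Y := \sum_{i<m} s_i L_i$, so that the conditional distribution of $X$ is the two-point distribution on $a := \|Y+L_m\|$ and $b := \|Y-L_m\|$ with equal weights. By the law of total variance,
\[
\mathrm{Var}(X) = \mathbb{E}\!\left[\left(\tfrac{a-b}{2}\right)^{\!2}\right] + \mathrm{Var}\!\left(\tfrac{a+b}{2}\right),
\]
and one would like to combine the trivial bound $|a-b| \leq 2\|L_m\|$ with the induction hypothesis applied to the convex function $Y \mapsto (a+b)/2$, which involves only $m-1$ Rademachers. The obstacle is that naive tensorization of variance only yields the Poincar\'e-type bound $\mathrm{Var}(X) \leq \sum_i \|L_i\|^2$, which is generally strictly larger than $(\mathbb{E} X)^2$; so a direct chaining argument loses a multiplicative factor at every step and does not close at the sharp constant.

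The argument in \cite{LO} circumvents this by establishing a strengthened two-point inequality on $\{-1,1\}$ and tensorizing it in a form that exactly preserves the constant $2$; the essential input is the convexity of $v \mapsto \|Y+v\|$ rather than just a Lipschitz bound, which is what distinguishes the vector-valued Khintchine--Kahane setting from the general concentration setting on the cube. The main obstacle is precisely this sharp tensorization step, which does not follow from any standard Poincar\'e or log-Sobolev inequality on $\{-1,1\}^m$. Since the lemma is used only as a black-box tool, the proof here simply appeals to \cite{LO}, applied with the given normed space $V$ and vectors $L_1,\ldots,L_m$.
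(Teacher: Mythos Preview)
Your proposal is correct and matches the paper's treatment exactly: the paper gives no proof of this lemma and simply cites \cite{LO}, which is precisely what you do. Your additional commentary on why a naive inductive tensorization fails and what the Lata{\l}a--Oleszkiewicz argument actually uses (convexity of the norm, not just a Lipschitz bound) is accurate but goes beyond what the paper provides.
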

The next lemma, Lemma \ref{w1prelb}, is needed to lower bound the expectation that arises from the partial symmetrization step in the proof of Lemma \ref{w1lb} below. Lemma \ref{w1prelb} can be thought of as a dual version of Lemma \ref{Lipbound}. While Lemma \ref{Lipbound} gives an upper bound for the Lipschitz seminorm of a random sum, Lemma \ref{w1prelb} gives a lower bound for the supremum of a random sum over all functions $f$ with $\|f\|_{\mathrm{Lip}}\leq 1$. The proof uses a duality argument that resembles the proof of the fact that if a Banach space has type 2 then its dual has cotype 2.
\begin{lemma}\label{w1prelb}
Suppose that $(E,d_{E})$ is a metric space and $x_{i,j}\in E,\,y_{i,j}\in E$ for $1\leq i\leq m$ and $1\leq j\leq n$. Let $s_{1},\ldots,s_{m}$ be independent Rademacher random variables. Then
\[\mathbb{E}\sup_{\|f\|_{\mathrm{Lip}\leq1}}\sum_{i=1}^{m}s_{i}\sum_{j=1}^{n}(f(x_{i,j})-f(y_{i,j}))\geq\frac{1}{\sqrt{28m\ln(2mn)}}\sum_{i=1}^{m}\sup_{\|f\|_{\mathrm{Lip}}\leq 1}\,\sum_{j=1}^{n}(f(x_{i,j})-f(y_{i,j})),\]
where the suprema are over all $f:E\to\mathbb{R}$ with $\|f\|_{\mathrm{Lip}}\leq 1$.
\end{lemma}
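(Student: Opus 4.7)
The plan is to run a duality argument that mirrors the standard proof ``type $2$ in $X$ implies cotype $2$ in $X^{*}$.'' First I would restrict attention to the finite metric space $E_{0}:=\{x_{i,j},y_{i,j}:1\leq i\leq m,\,1\leq j\leq n\}$, which has at most $2mn$ points, and define the linear functionals
\[L_{i}(f):=\sum_{j=1}^{n}(f(x_{i,j})-f(y_{i,j}))\]
on the space of Lipschitz functions on $E_{0}$. Since $L_{i}$ vanishes on constants and $E_{0}$ is finite, for each $i$ I can choose $f_{i}:E_{0}\to\mathbb{R}$ with $\|f_{i}\|_{\mathrm{Lip}}\leq1$ attaining the supremum, that is, $L_{i}(f_{i})=\sup_{\|f\|_{\mathrm{Lip}}\leq1}L_{i}(f)$. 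Because of the freedom $f\mapsto-f$, the quantity $\sup_{\|f\|_{\mathrm{Lip}}\leq1}\sum_{i}s_{i}L_{i}(f)$ is precisely the dual-norm of the random functional $\sum_{i}s_{i}L_{i}$.

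The main step is to pair the random functional $\sum_{i}s_{i}L_{i}$ with the random Lipschitz function $\sum_{i}s_{i}f_{i}$. For any realization of the signs, the defining property of the supremum gives
\[\sup_{\|f\|_{\mathrm{Lip}}\leq1}\sum_{i=1}^{m}s_{i}L_{i}(f)\;\cdot\;\Bigl\|\sum_{i=1}^{m}s_{i}f_{i}\Bigr\|_{\mathrm{Lip}}\;\geq\;\sum_{i=1}^{m}s_{i}L_{i}\!\left(\sum_{j=1}^{m}s_{j}f_{j}\right)=\sum_{i,j}s_{i}s_{j}L_{i}(f_{j})\]
(the degenerate case $\|\sum_{i}s_{i}f_{i}\|_{\mathrm{Lip}}=0$ forces the right side to vanish, so the inequality holds trivially there). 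Taking expectations and using $\mathbb{E}[s_{i}s_{j}]=\delta_{ij}$ kills all the off-diagonal terms and leaves
\[\mathbb{E}\Bigl[\sup_{\|f\|_{\mathrm{Lip}}\leq1}\sum_{i}s_{i}L_{i}(f)\cdot\Bigl\|\sum_{i}s_{i}f_{i}\Bigr\|_{\mathrm{Lip}}\Bigr]\;\geq\;\sum_{i=1}^{m}L_{i}(f_{i})=\sum_{i=1}^{m}\sup_{\|f\|_{\mathrm{Lip}}\leq1}L_{i}(f).\]

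To finish, I apply Cauchy--Schwarz on the left-hand side to split it into an $L^{2}$ norm of the supremum and an $L^{2}$ norm of $\|\sum_{i}s_{i}f_{i}\|_{\mathrm{Lip}}$. The latter is at most $\sqrt{14m\ln|E_{0}|}\leq\sqrt{14m\ln(2mn)}$ by Lemma~\ref{Lipbound}. The former, viewed as a norm of $\sum_{i}s_{i}L_{i}$ in the Banach space dual to Lipschitz-modulo-constants, is at most $\sqrt{2}$ times its first moment by the Kahane--Khintchine inequality of Lemma~\ref{kk}. Substituting both bounds and dividing by $\sqrt{28m\ln(2mn)}$ gives the desired lower bound.

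I do not anticipate a genuine obstacle; the only subtle point is that the supremum inside the expectation must be interpreted as a dual norm in order to invoke Lemma~\ref{kk}, and this is legitimate because the supremum over $\|f\|_{\mathrm{Lip}}\leq1$ of a linear functional that annihilates constants is a bona fide norm on the quotient space of Lipschitz functions modulo constants on the finite set $E_{0}$.
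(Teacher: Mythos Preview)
Your proposal is correct and follows essentially the same argument as the paper: restrict to the finite set $E_{0}$ of at most $2mn$ points, view the $L_{i}$ as elements of the dual of the Lipschitz space (modulo constants, equivalently after fixing a basepoint), pair $\sum_{i}s_{i}L_{i}$ with $\sum_{i}s_{i}f_{i}$, use $\mathbb{E}[s_{i}s_{j}]=\delta_{ij}$ to recover $\sum_{i}L_{i}(f_{i})$, and then apply Cauchy--Schwarz together with Lemma~\ref{Lipbound} and Lemma~\ref{kk}. The only cosmetic difference is that the paper writes the pairing identity $\sum_{i}L_{i}(f_{i})=\mathbb{E}\bigl(\sum_{i}s_{i}L_{i}\bigr)\bigl(\sum_{j}s_{j}f_{j}\bigr)$ first and then bounds it, whereas you bound the pointwise product and take expectation afterward; both are equivalent.
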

\begin{proof}
Let $E_{0}=\cup_{i=1}^{m}\cup_{j=1}^{n}\{x_{i,j},y_{i,j}\}$. Without loss of generality, we may assume that $E=E_{0}$. This is because every $f_{0}:E_{0}\to\mathbb{R}$ with $\|f_{0}\|_{\mathrm{Lip}}\leq 1$ can be extended to some $f:E\to\mathbb{R}$ with $\|f\|_{\mathrm{Lip}}\leq 1$, e.g., take $f(x)=\sup_{y\in E_{0}}(f_{0}(y)-d_{E}(x,y))$ for $x\in E$.

Fix $z\in E$. Let $U$ be the vector space of all functions $f:E\to\mathbb{R}$ such that $f(z)=0$. Note that $f\mapsto\|f\|_{\mathrm{Lip}}$ defines a norm on $U$. Thus $(U,\|\,\|_{\mathrm{Lip}})$ is a normed space. Let $(V,\|\,\|_{*})$ be the dual of this normed space, i.e., $V$ is the vector space consisting of linear maps $L:U\to\mathbb{R}$, and the norm $\|\,\|_{*}$ on $V$ is defined by
\[\|L\|_{*}:=\sup_{f\in U\backslash\{0\}}\frac{L(f)}{\|f\|_{\mathrm{Lip}}}=\sup_{\substack{f\in U\\\|f\|_{\mathrm{Lip}\leq 1}}}L(f).\]
For each $1\leq i\leq m$, define $L_{i}\in V$ by
\[L_{i}(f)=\sum_{j=1}^{n}(f(x_{i,j})-f(y_{i,j})),\quad f\in U.\]
We need to prove that
\begin{equation}\label{w1prelbeq1}
\mathbb{E}\left\|\sum_{i=1}^{m}s_{i}L_{i}\right\|_{*}\geq
\frac{1}{\sqrt{28m\ln(2mn)}}\sum_{i=1}^{m}\|L_{i}\|_{*}.
\end{equation}
For all $f_{1},\ldots,f_{n}\in U$ such that $\|f_{i}\|_{\mathrm{Lip}}\leq 1$ for all $i$, we have
\begin{eqnarray*}
\sum_{i=1}^{m}L_{i}(f_{i})&=&\mathbb{E}\left(\sum_{i=1}^{m}s_{i}L_{i}\right)\left(\sum_{k=1}^{m}s_{i}f_{i}\right)\\&\leq&
\mathbb{E}\left\|\sum_{i=1}^{m}s_{i}L_{i}\right\|_{*}\left\|\sum_{k=1}^{m}s_{i}f_{i}\right\|_{\mathrm{Lip}}\\&\leq&
\left(\mathbb{E}\left\|\sum_{i=1}^{m}s_{i}L_{i}\right\|_{*}^{2}\right)^{\frac{1}{2}}\cdot
\left(\mathbb{E}\left\|\sum_{k=1}^{m}s_{i}f_{i}\right\|_{\mathrm{Lip}}^{2}\right)^{\frac{1}{2}}\\&\leq&
\sqrt{2}\cdot\left(\mathbb{E}\left\|\sum_{i=1}^{m}s_{i}L_{i}\right\|_{*}\right)\cdot\left(14m\ln|E|\right)^{\frac{1}{2}},
\end{eqnarray*}
where the last inequality follows from Lemma \ref{kk} and Lemma \ref{Lipbound}. Taking supremum over all $f_{1},\ldots,f_{n}\in U$ such that $\|f_{i}\|_{\mathrm{Lip}}\leq 1$ for all $i$, we obtain
\[\sum_{i=1}^{m}\|L_{i}\|_{*}\leq\sqrt{28m\ln|E|}\cdot\mathbb{E}\left\|\sum_{i=1}^{m}s_{i}L_{i}\right\|_{*}.\]
Since $|E|=|E_{0}|\leq 2mn$, this proves (\ref{w1prelbeq1}). So the result follows, since for every function $f:E\to\mathbb{R}$, we have $f-f(z)\in U$ and $\|f\|_{\mathrm{Lip}}=\|f-f(z)\|_{\mathrm{Lip}}$.
\end{proof}
\begin{lemma}\label{w1lb}
Let $E$ be a separable metric space and $\mu\in\mathcal{P}_{1}(E)$. Then for all $m,n\in\mathbb{N}$,
\[\mathbb{E}[W_{1}(\mu_{mn},\mu)]\geq\frac{1}{11\sqrt{m\ln(2mn)}}\cdot\mathbb{E}[W_{1}(\mu_{n},\mu)].\]
\end{lemma}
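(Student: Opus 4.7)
The plan is to use a partial symmetrization argument combined with Lemma~\ref{w1prelb}, which is exactly what Lemma~\ref{w1prelb} was designed for.

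First, I would introduce the samples $X_{i,j}$, $1\leq i\leq m$, $1\leq j\leq n$, iid with law $\mu$, so that $\mu_{mn}=\frac{1}{mn}\sum_{i,j}\delta_{X_{i,j}}$, and I would write $\mu_{n}^{(i)}:=\frac{1}{n}\sum_{j}\delta_{X_{i,j}}$. Then I would take an independent copy $(Y_{i,j})$ of $(X_{i,j})$, set $\nu_{n}^{(i)}:=\frac{1}{n}\sum_{j}\delta_{Y_{i,j}}$, and let $s_{1},\ldots,s_{m}$ be independent Rademacher random variables, independent of everything. For each fixed $i$, the pair $((X_{i,j})_{j},(Y_{i,j})_{j})$ has the same joint distribution as its swap, and the blocks are independent across $i$, so a block-level symmetrization gives
\[\mathbb{E}\sup_{\|f\|_{\mathrm{Lip}}\leq 1}\sum_{i=1}^{m}s_{i}\sum_{j=1}^{n}(f(X_{i,j})-f(Y_{i,j}))=\mathbb{E}\sup_{\|f\|_{\mathrm{Lip}}\leq 1}\sum_{i=1}^{m}\sum_{j=1}^{n}(f(X_{i,j})-f(Y_{i,j})).\]
By the triangle inequality and Kantorovich--Rubinstein (\ref{krthm}), the right-hand side is bounded above by $2mn\cdot\mathbb{E}[W_{1}(\mu_{mn},\mu)]$.

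Second, I would get the matching lower bound on the left-hand side. Conditioning on $(X_{i,j},Y_{i,j})$ and applying Lemma~\ref{w1prelb} (with $x_{i,j}=X_{i,j}$, $y_{i,j}=Y_{i,j}$, working in the finite metric subspace they span, which can be isometrically embedded into $E$) yields, pointwise in the samples,
\[\mathbb{E}_{s}\sup_{\|f\|_{\mathrm{Lip}}\leq 1}\sum_{i=1}^{m}s_{i}\sum_{j=1}^{n}(f(X_{i,j})-f(Y_{i,j}))\geq\frac{1}{\sqrt{28m\ln(2mn)}}\sum_{i=1}^{m}\sup_{\|f\|_{\mathrm{Lip}}\leq 1}\sum_{j=1}^{n}(f(X_{i,j})-f(Y_{i,j})).\]
Each inner supremum equals $n\cdot W_{1}(\mu_{n}^{(i)},\nu_{n}^{(i)})$ by (\ref{krthm}). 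Taking expectation over the samples and applying Lemma~\ref{2samples} to each $i$ gives $\mathbb{E}[W_{1}(\mu_{n}^{(i)},\nu_{n}^{(i)})]\geq\mathbb{E}[W_{1}(\mu_{n},\mu)]$.

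Combining the upper and lower estimates on $\mathbb{E}\sup_{\|f\|_{\mathrm{Lip}}\leq 1}\sum_{i}s_{i}\sum_{j}(f(X_{i,j})-f(Y_{i,j}))$, I obtain
\[2mn\cdot\mathbb{E}[W_{1}(\mu_{mn},\mu)]\geq\frac{mn}{\sqrt{28m\ln(2mn)}}\cdot\mathbb{E}[W_{1}(\mu_{n},\mu)],\]
which rearranges to $\mathbb{E}[W_{1}(\mu_{mn},\mu)]\geq\frac{1}{2\sqrt{28m\ln(2mn)}}\mathbb{E}[W_{1}(\mu_{n},\mu)]$, and since $2\sqrt{28}<11$, this is the claim. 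The main conceptual step is the block symmetrization trick that turns the $m$-fold average of iid empirical deviations into a Rademacher sum indexed by blocks, so that Lemma~\ref{w1prelb} can be applied at the level of whole blocks (producing the $\sqrt{m\ln(2mn)}$ denominator rather than a $\sqrt{mn}$ denominator). I don't expect a serious technical obstacle beyond verifying that the block exchangeability really does make the Rademacher-weighted and unweighted sums equal in expectation, and handling measurability by restricting to the finite set $\bigcup_{i,j}\{X_{i,j},Y_{i,j}\}$ before invoking Lemma~\ref{w1prelb}.
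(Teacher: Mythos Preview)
Your proposal is correct and follows essentially the same route as the paper: block-level symmetrization to insert Rademacher signs, Lemma~\ref{w1prelb} applied conditionally on the samples to lower bound the Rademacher sum, Lemma~\ref{2samples} for both the upper bound on $\mathbb{E}\,W_{1}(\mu_{mn},\mu_{mn}')$ and the lower bound on $\mathbb{E}\,W_{1}(\mu_{n}^{(i)},\nu_{n}^{(i)})$, and the final constant $2\sqrt{28}<11$. The only cosmetic difference is that you phrase the upper bound via the triangle inequality whereas the paper invokes Lemma~\ref{2samples}, but these are the same computation.
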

\begin{proof}
Let $X_{i,j}$ and $Y_{i,j}$, for $1\leq i\leq m$ and $1\leq j\leq n$, be independent samples of $\mu$. We begin with a partial symmetrization step.

{\bf Claim:} For all fixed choices $s_{1},\ldots,s_{m}\in\{\pm 1\}$, the two random variables
\[\sup_{\|f\|_{\mathrm{Lip}}\leq 1}\sum_{i=1}^{m}\sum_{j=1}^{n}(f(X_{i,j})-f(Y_{i,j}))\quad\text{and}\quad\sup_{\|f\|_{\mathrm{Lip}}\leq 1}\sum_{i=1}^{m}s_{i}\sum_{j=1}^{n}(f(X_{i,j})-f(Y_{i,j}))\]
have the same distribution, where the suprema are over all $f:E\to\mathbb{R}$ with $\|f\|_{\mathrm{Lip}}\leq 1$. Let us first prove the claim for $s_{1}=-1$, $s_{2},\ldots,s_{n}=1$. Define
\[H((X_{i,j})_{i,j},(Y_{i,j})_{i,j}):=\sup_{\|f\|_{\mathrm{Lip}}\leq 1}\sum_{i=1}^{m}\sum_{j=1}^{n}(f(X_{i,j})-f(Y_{i,j})).\]
If one interchanges the roles of $(X_{1,j})_{j}$ and $(Y_{1,j})_{j}$, the distribution of $H((X_{i,j})_{i,j},(Y_{i,j})_{i,j})$ is still the same and yet after interchanging those roles, $H((X_{i,j})_{i,j},(Y_{i,j})_{i,j})$ becomes
\[\sup_{\|f\|_{\mathrm{Lip}}\leq 1}\sum_{i=1}^{m}s_{i}\sum_{j=1}^{n}(f(X_{i,j})-f(Y_{i,j})),\]
where $s_{1}=-1$ and $s_{2},\ldots,s_{m}=1$. For general choices of $s_{1},\ldots,s_{m}\in\{\pm 1\}$, let $I=\{i:\,s_{i}=-1\}$. For each $i\in I$, interchange the roles of $(X_{i,j})_{j}$ and $(Y_{i,j})_{j}$. But for $i\notin I$, keep $(X_{i,j})_{j}$ and $(Y_{i,j})_{j}$ the same. Then after this interchange process, the distribution of $H((X_{i,j})_{i,j},(Y_{i,j})_{i,j})$ is still the same and yet $H((X_{i,j})_{i,j},(Y_{i,j})_{i,j})$ becomes
\[\sup_{\|f\|_{\mathrm{Lip}}\leq 1}\sum_{i=1}^{m}s_{i}\sum_{j=1}^{n}(f(X_{i,j})-f(Y_{i,j})).\]
This proves the claim. Using the claim, we have
\begin{equation}\label{w1lbeq2}
\mathbb{E}\sup_{\|f\|_{\mathrm{Lip}}\leq 1}\sum_{i=1}^{m}\sum_{j=1}^{n}(f(X_{i,j})-f(Y_{i,j}))=\mathbb{E}\sup_{\|f\|_{\mathrm{Lip}}\leq 1}\sum_{i=1}^{m}s_{i}\sum_{j=1}^{n}(f(X_{i,j})-f(Y_{i,j})),
\end{equation}
for all $s_{1},\ldots,s_{m}\in\{\pm 1\}$. Since this holds for all $s_{1},\ldots,s_{m}\in\{\pm 1\}$, we can randomize $s_{1},\ldots,s_{m}$ to be independent Rademacher random variables that are independent of all the $X_{i,j}$ and $Y_{i,j}$. After this randomization of $s_{1},\ldots,s_{m}$, the equation (\ref{w1lbeq2}) still holds but with the expectation on the right hand side being over all $s_{1},\ldots,s_{m}$ and all $X_{i,j}$ and $Y_{i,j}$.

We now lower bound the right hand side of (\ref{w1lbeq2}). Applying Lemma \ref{w1prelb} conditioning on all the $X_{i,j}$ and $Y_{i,j}$, we have
\begin{align*}
&\mathbb{E}\left(\left.\sup_{\|f\|_{\mathrm{Lip}}\leq 1}\sum_{i=1}^{m}s_{i}\sum_{j=1}^{n}(f(X_{i,j})-f(Y_{i,j}))\right|\,(X_{i,j})_{i,j},\,(Y_{i,j})_{i,j}\right)\\\geq&
\frac{1}{\sqrt{28m\ln(2mn)}}\sum_{i=1}^{m}\sup_{\|f\|_{\mathrm{Lip}}\leq 1}\,\sum_{j=1}^{n}(f(X_{i,j})-f(Y_{i,j})).
\end{align*}
Taking the full expectation, we have
\begin{align*}
&\mathbb{E}\sup_{\|f\|_{\mathrm{Lip}}\leq 1}\sum_{i=1}^{m}s_{i}\sum_{j=1}^{n}(f(X_{i,j})-f(Y_{i,j}))\\\geq&
\frac{1}{\sqrt{28m\ln(2mn)}}\sum_{i=1}^{m}\mathbb{E}\sup_{\|f\|_{\mathrm{Lip}}\leq 1}\,\sum_{j=1}^{n}(f(X_{i,j})-f(Y_{i,j}))\\=&
\frac{m}{\sqrt{28m\ln(2mn)}}\cdot\mathbb{E}\sup_{\|f\|_{\mathrm{Lip}}\leq 1}\,\sum_{j=1}^{n}(f(X_{1,j})-f(Y_{1,j}))\\=&
\frac{m\cdot n}{\sqrt{28m\ln(2mn)}}\cdot\mathbb{E}\left[W_{1}\left(\frac{1}{n}\sum_{j=1}^{n}\delta_{X_{1,j}},\frac{1}{n}\sum_{j=1}^{n}\delta_{Y_{1,j}}\right)\right]\\\geq&
\frac{m\cdot n}{\sqrt{28m\ln(2mn)}}\cdot\mathbb{E}[W_{1}(\mu_{n},\mu)],
\end{align*}
where the second step follows from the fact that the for each $i$, the joint distribution of $(X_{i,j},Y_{i,j})_{1\leq j\leq n}$ is the same as the joint distribution of $(X_{1,j},Y_{1,j})_{1\leq j\leq n}$, the third step follows from (\ref{krthm}), and the last inequality follows from Lemma \ref{2samples} with $p=1$. Thus we have found a lower bound for the right hand side of (\ref{w1lbeq2}). So by the definition of the 1-Wasserstein distance and by (\ref{w1lbeq2}), it follows that
\begin{eqnarray*}
\mathbb{E}\left[W_{1}\left(\frac{1}{m\cdot n}\sum_{i,j}\delta_{X_{i,j}},\frac{1}{m\cdot n}\sum_{i,j}\delta_{Y_{i,j}}\right)\right]&=&
\frac{1}{m\cdot n}\mathbb{E}\sup_{\|f\|_{\mathrm{Lip}}\leq 1}\sum_{i=1}^{m}\sum_{j=1}^{n}(f(X_{i,j})-f(Y_{i,j}))\\&=&
\frac{1}{m\cdot n}\mathbb{E}\sup_{\|f\|_{\mathrm{Lip}}\leq 1}\sum_{i=1}^{m}s_{i}\sum_{j=1}^{n}(f(X_{i,j})-f(Y_{i,j}))\\&\geq&
\frac{1}{\sqrt{28m\ln(2mn)}}\cdot\mathbb{E}[W_{1}(\mu_{n},\mu)].
\end{eqnarray*}
By Lemma \ref{2samples}, $\mathbb{E}\left[W_{1}\left(\frac{1}{mn}\sum_{i,j}\delta_{X_{i,j}},\frac{1}{mn}\sum_{i,j}\delta_{Y_{i,j}}\right)\right]\leq 2\cdot\mathbb{E}[W_{1}(\mu_{mn},\mu)]$. Thus the result follows.
\end{proof}
\begin{proof}[Proof of the lower bound in Theorem \ref{main1}]
Let $k_{0}=\lfloor\log_{2}n\rfloor$. By Lemma \ref{basicstability},
\begin{equation}\label{main1lbeq1}
\mathbb{E}[W_{1}(\mu_{n},\mu)]\geq\frac{2^{k_{0}}}{n}\cdot\mathbb{E}[W_{1}(\mu_{2^{k_{0}}},\mu)]\geq\frac{1}{2}\cdot\mathbb{E}[W_{1}(\mu_{2^{k_{0}}},\mu)].
\end{equation}
Fix $0\leq k\leq k_{0}$. Write $2^{k_{0}}=2^{k_{0}-k}2^{k}$. By Lemma \ref{w1lb}, we have
\begin{eqnarray*}
\mathbb{E}[W_{1}(\mu_{2^{k_{0}}},\mu)]&\geq&\frac{1}{11\sqrt{2^{k_{0}-k}\ln(2\cdot 2^{k_{0}})}}\cdot\mathbb{E}[W_{1}(\mu_{2^{k}},\mu)]\\&=&
\frac{1}{11\sqrt{\ln(2\cdot 2^{k_{0}})}}\cdot\sqrt{\frac{2^{k}}{2^{k_{0}}}}\cdot\mathbb{E}[W_{1}(\mu_{2^{k}},\mu)]\\&\geq&
\frac{1}{11\sqrt{\ln(2n)}}\cdot\sqrt{\frac{2^{k}}{n}}\cdot\mathbb{E}[W_{1}(\mu_{2^{k}},\mu)]\geq
\frac{1}{11\sqrt{\ln(2n)}}\cdot\sqrt{\frac{2^{k}}{n}}\cdot b_{2^{k},1}(\mu).
\end{eqnarray*}
By (\ref{main1lbeq1}), the lower bound in Theorem \ref{main1} follows.
\end{proof}
\section{Proof of the third main result}
In this section, we prove the third main result Theorem \ref{main3}. To bound any quantity of the form $b_{n,p}(\mu)$, one needs to construct a measure of uniform type (i.e., a probability measure of the form $\frac{1}{n}\sum_{i=1}^{n}\delta_{x_{i}}$) to approximate $\mu$. But how to construct a measure of uniform type that gives a good approximation of $\mu$? In Lemma \ref{advdecomp} below, we show how to decompose $\mu$ as a weighted sum of probability measures each of which has a pushforward that is of uniform type. As shown in (\ref{proofmain3eq3}) in the proof of Theorem \ref{main3} below, the weighted sum of these pushforward measures happen to be also of uniform type. By Lemma \ref{mixlemma}, we can then use this weighted sum of pushforward measures to approximate $\mu$ so that we can bound $b_{2^{k},p}(\mu)$ in Theorem \ref{main3}.

Throughout this section, for a set $E$ and $r\in\mathbb{N}$, let $\mathcal{U}(E,r)$ be the set of all probability measures on $E$ of the form $\frac{1}{r}\sum_{i=1}^{r}\delta_{x_{i}}$ for some $x_{1},\ldots,x_{r}\in E$. Observe that when $E$ is a metric space and $\mu\in\mathcal{P}_{p}(E)$, we have
\begin{equation}\label{bnralternative}
b_{r,p}(\mu)=\inf_{x_{1},\ldots,x_{r}\in E}W_{p}\left(\mu,\frac{1}{r}\sum_{i=1}^{r}\delta_{x_{i}}\right)=\inf_{\nu\in\mathcal{U}(E,r)}W_{p}(\mu,\nu),
\end{equation}
for all $r\in\mathbb{N}$ and $p\geq 1$.
\begin{lemma}\label{axiomchoice}
Suppose that $S$ is a finite set and $r\in\mathbb{N}$. Let $f:S\to\mathbb{N}\cup\{0\}$ be such that $\sum_{y\in S}f(y)\geq r$. Then there exists $g:S\to\mathbb{N}\cup\{0\}$ such that $\sum_{y\in S}g(y)=r$ and $g(y)\leq f(y)$ for all $y\in S$.
\end{lemma}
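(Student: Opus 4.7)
The plan is to argue by a straightforward induction on the nonnegative integer $N := \sum_{y\in S}f(y) - r$, which measures how much ``excess'' mass $f$ has over the target $r$. The base case $N=0$ is trivial, since then $g:=f$ already satisfies the required properties.

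For the inductive step, assuming the lemma holds whenever the excess is $N-1$, suppose $\sum_{y\in S}f(y)=r+N$ with $N\geq 1$. Then $\sum_{y\in S}f(y)>r\geq 0$, so there must exist some $y^{*}\in S$ with $f(y^{*})\geq 1$. Define $f':S\to\mathbb{N}\cup\{0\}$ by setting $f'(y^{*})=f(y^{*})-1$ and $f'(y)=f(y)$ for $y\neq y^{*}$. Then $f'(y)\leq f(y)$ for all $y\in S$, and $\sum_{y\in S}f'(y)=r+N-1\geq r$. By the inductive hypothesis applied to $f'$, there exists $g:S\to\mathbb{N}\cup\{0\}$ with $\sum_{y\in S}g(y)=r$ and $g(y)\leq f'(y)$ for all $y\in S$. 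Since $f'(y)\leq f(y)$ pointwise, we conclude $g(y)\leq f(y)$ for all $y\in S$, completing the induction.

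There is no substantive obstacle here; the only point requiring minor care is ensuring that the chosen $y^{*}$ with $f(y^{*})\geq 1$ exists, which follows from $\sum_{y\in S}f(y)>0$. Alternatively, one could give a direct greedy construction: enumerate $S=\{y_{1},\ldots,y_{m}\}$ and define $g(y_{i})=\min\bigl(f(y_{i}),\;r-\sum_{j<i}g(y_{j})\bigr)$, then verify by a simple induction on $i$ that the partial sums remain bounded by $r$ and that equality is reached by $i=m$. Either approach yields the result in a few lines.
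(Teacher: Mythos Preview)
Your proof is correct and takes essentially the same approach as the paper: the paper also repeatedly picks some $y_{0}$ with $f(y_{0})\geq 1$ and decrements $f(y_{0})$ by one until the total sum equals $r$, which is exactly your induction on the excess $N$ phrased as an iterative procedure.
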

\begin{proof}
If $\sum_{y\in S}f(y)=r$, then take $g=f$ and done. If not, then $\sum_{y\in S}f(y)>r$, so there exists $y_{0}\in S$ such that $f(y_{0})\geq 1$; now replace $f(y_{0})$ by $f(y_{0})-1$, i.e., redefine its value. Repeat this process until $\sum_{y\in S}f(y)=r$. Then take $g=f$. This guarantees that $\sum_{y\in S}g(y)=r$ and that for each $y\in S$, the value of $g(y)$ is at most the original value of $f(y)$.
\end{proof}
\begin{lemma}\label{basicdecomp}
Suppose that $r\in\mathbb{N}$ and $E$ is a measurable space. Let $T:E\to E$ be a measurable map whose range $\mathrm{ran}(T)$ contains at most $r$ points. Then every probability measure $\mu$ on $E$ can be decomposed as
\[\mu=\frac{1}{2}\lambda+\frac{1}{2}\zeta,\]
for some probability measures $\lambda$ and $\zeta$ on $E$ such that $T_{\#}\lambda\in\mathcal{U}(E,r)$.
\end{lemma}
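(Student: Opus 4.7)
The plan is to reduce the problem to a combinatorial question about approximating the masses $\mu(T^{-1}\{y\})$ by integer multiples of $1/r$. First, write $S = \mathrm{ran}(T) = \{y_1, \ldots, y_s\}$ with $s \leq r$, and set $p_i = \mu(T^{-1}\{y_i\})$, so that $T_{\#}\mu = \sum_{i=1}^s p_i \delta_{y_i}$ and $\sum_i p_i = 1$. I would then observe that producing a decomposition $\mu = \frac{1}{2}\lambda + \frac{1}{2}\zeta$ with $\lambda, \zeta$ probability measures on $E$ is equivalent to producing a probability measure $\lambda$ on $E$ satisfying $\lambda \leq 2\mu$ (and then defining $\zeta := 2\mu - \lambda$, which is automatically a positive measure with $\zeta(E) = 1$). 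The requirement $T_{\#}\lambda \in \mathcal{U}(E, r)$ translates to $T_{\#}\lambda = \sum_{i=1}^s (n_i/r) \delta_{y_i}$ for some nonnegative integers $n_i$ with $\sum_i n_i = r$.

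Next I would choose these integers $n_i$. The constraint $\lambda \leq 2\mu$ forces $T_{\#}\lambda \leq 2\, T_{\#}\mu$, i.e., $n_i/r \leq 2 p_i$, which is equivalent to $n_i \leq \lfloor 2 r p_i \rfloor$. Setting $f(y_i) := \lfloor 2 r p_i \rfloor$ and using $\lfloor x \rfloor \geq x - 1$ gives
\[
\sum_{i=1}^s f(y_i) \;\geq\; \sum_{i=1}^s (2 r p_i - 1) \;=\; 2r - s \;\geq\; r,
\]
where the last inequality uses exactly the hypothesis $|\mathrm{ran}(T)| \leq r$. Lemma \ref{axiomchoice} then furnishes $g : S \to \mathbb{N} \cup \{0\}$ with $\sum_i g(y_i) = r$ and $0 \leq g(y_i) \leq f(y_i) \leq 2 r p_i$.

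Finally, I would lift $g$ to a measure on $E$. On each $A_i := T^{-1}(\{y_i\})$, set $\lambda|_{A_i} := \frac{g(y_i)}{r p_i} \mu|_{A_i}$ whenever $p_i > 0$, and $\lambda|_{A_i} := 0$ when $p_i = 0$ (in which case $g(y_i) = 0$ automatically). Since the $A_i$ partition $E$, the resulting $\lambda$ is a well-defined measure with $\lambda(A_i) = g(y_i)/r$; hence $\lambda(E) = 1$ and $T_{\#}\lambda = \sum_i (g(y_i)/r)\,\delta_{y_i} \in \mathcal{U}(E, r)$. The pointwise bound $\frac{g(y_i)}{r p_i} \leq 2$ on each $A_i$ yields $\lambda \leq 2\mu$, so $\zeta := 2\mu - \lambda$ is a probability measure, and $\mu = \frac{1}{2}\lambda + \frac{1}{2}\zeta$ is the required decomposition. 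There is no serious obstacle; the only real content is the counting bound $\sum_i \lfloor 2 r p_i \rfloor \geq r$, which is exactly where the hypothesis $|\mathrm{ran}(T)| \leq r$ is used.
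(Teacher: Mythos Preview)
Your proof is correct and follows essentially the same approach as the paper: both use the floor-count bound $\sum_i \lfloor 2rp_i\rfloor \geq 2r - s \geq r$, invoke Lemma~\ref{axiomchoice} to select integers $g(y_i)$ summing to $r$, and then lift via the conditional measures $\mu|_{T^{-1}\{y_i\}}/p_i$. The only cosmetic difference is that the paper restricts $S$ to points with $p_i>0$ from the outset, whereas you keep the full range and handle $p_i=0$ separately (correctly noting that $g(y_i)=0$ there); your framing via $\lambda\leq 2\mu$ is equivalent to the paper's explicit formula for $\zeta$.
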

\begin{proof}
Let $S=\{y\in\mathrm{ran}(T):\,\mu(T^{-1}\{y\})\neq 0\}$. Since $\mathrm{ran}(T)$ is finite, $S$ is also finite. Moreover, $\mu(T^{-1}\{y\})=0$ for every $y\in\mathrm{ran}(T)\backslash S$. So
\begin{equation}\label{basicdecompeq1}
\sum_{y\in S}\mu(T^{-1}\{y\})=\sum_{y\in\mathrm{ran}(T)}\mu(T^{-1}\{y\})=\mu(E)=1.
\end{equation}
Define $f:S\to\mathbb{N}\cup\{0\}$ by
\[f(y)=\lfloor 2r\cdot\mu(T^{-1}\{y\})\rfloor,\quad y\in S.\]
Let $|S|$ be the cardinality of $S$. By assumption, $|S|\leq|\mathrm{ran}(T)|\leq r$. We have
\[\sum_{y\in S}f(y)\geq\sum_{y\in S}(2r\cdot\mu(T^{-1}\{y\})-1)=2r\sum_{y\in S}\mu(T^{-1}\{y\})-|S|=2r-|S|\geq r,\]
where we use (\ref{basicdecompeq1}) in the third step. So by Lemma \ref{axiomchoice}, there exists $g:S\to\mathbb{N}\cup\{0\}$ such that $\sum_{y\in S}g(y)=r$ and $g(y)\leq f(y)\leq 2r\cdot\mu(T^{-1}\{y\})$ for all $y\in S$.

For each $y\in S$, define the probability measure $\mu^{(y)}$ on $E$ by
\[\mu^{(y)}(A)=\frac{\mu(A\cap T^{-1}\{y\})}{\mu(T^{-1}\{y\})},\]
for all measurable $A\subset E$. Define the probability measures $\lambda$ and $\zeta$ on $E$ by
\[\lambda=\sum_{y\in S}\frac{g(y)}{r}\cdot\mu^{(y)},\]
and
\[\zeta=\sum_{y\in S}\frac{2r\cdot\mu(T^{-1}\{y\})-g(y)}{r}\cdot\mu^{(y)}.\]
It is easy to check that $\lambda$ and $\zeta$ are indeed probability measures, since $\sum_{y\in S}g(y)=r$ and $0\leq g(y)\leq 2r\cdot\mu(T^{-1}\{y\})$ for all $y\in S$, and since $\sum_{y\in S}\mu(T^{-1}\{y\})=1$ by (\ref{basicdecompeq1}). We have
\[\frac{1}{2}\lambda+\frac{1}{2}\zeta=\sum_{y\in S}\mu(T^{-1}\{y\})\cdot\mu^{(y)}=\mu,\]
so $\mu=\frac{1}{2}\lambda+\frac{1}{2}\zeta$. Finally, since $T_{\#}\mu^{(y)}=\delta_{y}$ for each $y\in S$, we have
\[T_{\#}\lambda=\sum_{y\in S}\frac{g(y)}{r}\cdot T_{\#}\mu^{(y)}=\sum_{y\in S}\frac{g(y)}{r}\cdot\delta_{y},\]
and since each $g(y)\in\mathbb{N}\cup\{0\}$, it follows that $T_{\#}\lambda\in\mathcal{U}(E,r)$.
\end{proof}
\begin{remark}
The idea of the proof of Lemma \ref{basicdecomp} is as follows. Since $\mathrm{ran}(T)$ contains at most $r$ points, $T_{\#}\mu$ is a probability on a set of at most $r$ points. But by taking Lemma \ref{axiomchoice}, for any given probability measure on a set of at most $r$ points, one can construct a decomposition of this measure as $\frac{1}{2}$ times a probability measure, whose mass at each point is an integer multiple of $\frac{1}{r}$, plus $\frac{1}{2}$ times another probability measure. One can then ``inverse pushforward" these two probability measures to construct $\lambda$ and $\zeta$.
\end{remark}
In Lemma \ref{basicdecomp}, after we decompose $\mu$ as $\frac{1}{2}\lambda+\frac{1}{2}\zeta$, we can apply Lemma \ref{basicdecomp} again to decompose $\zeta$. Repeating this decomposition process multiple times, we obtain the following lemma.
\begin{lemma}\label{advdecomp}
Let $k\in\mathbb{N}\cup\{0\}$ and let $E$ be a measurable space set. For each $0\leq j\leq k$, let $T_{j}:E\to E$ be a measurable map whose range contains at most $2^{j}$ points. Then every probability measure $\mu$ on $E$ can be decomposed as
\begin{equation}\label{advdecompeq1}
\mu=\frac{1}{2^{k}}\lambda^{(0)}+\sum_{j=1}^{k}\frac{1}{2^{k-j+1}}\lambda^{(j)},
\end{equation}
for some probability measures $\lambda^{(0)},\ldots,\lambda^{(k)}$ on $E$ such that $(T_{j})_{\#}\lambda^{(j)}\in\mathcal{U}(E,2^{j})$ for every $0\leq j\leq k$.
\end{lemma}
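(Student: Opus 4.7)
The plan is to prove Lemma \ref{advdecomp} by induction on $k$, using Lemma \ref{basicdecomp} as the engine that peels off one term at a time. As the remark just before the statement suggests, the idea is: apply Lemma \ref{basicdecomp} to $\mu$ with the ``finest'' map $T_k$ (which has range of at most $2^k$ points) to split off a piece $\frac{1}{2}\lambda^{(k)}$ whose pushforward under $T_k$ lies in $\mathcal{U}(E,2^k)$, and then recurse on the remaining piece $\zeta$ using the maps $T_0,\dots,T_{k-1}$.

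For the base case $k=0$, the map $T_0$ has range of at most one point, so $T_0$ is constant on (a set of full measure, and hence we may assume on) $E$, say with value $x_0$. Then $(T_0)_{\#}\mu = \delta_{x_0}\in\mathcal{U}(E,1)$, so taking $\lambda^{(0)} = \mu$ gives the required decomposition $\mu = \frac{1}{2^0}\lambda^{(0)}$.

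For the inductive step, assume the statement for $k-1$. Given maps $T_0,\dots,T_k$, first apply Lemma \ref{basicdecomp} with $T = T_k$ and $r = 2^k$ to write
\[\mu = \tfrac{1}{2}\lambda^{(k)} + \tfrac{1}{2}\zeta,\]
where $\lambda^{(k)},\zeta$ are probability measures on $E$ and $(T_k)_{\#}\lambda^{(k)}\in\mathcal{U}(E,2^k)$. Then apply the induction hypothesis to $\zeta$ with the maps $T_0,\dots,T_{k-1}$ to obtain probability measures $\lambda^{(0)},\dots,\lambda^{(k-1)}$ with $(T_j)_{\#}\lambda^{(j)}\in\mathcal{U}(E,2^j)$ for $0\le j\le k-1$ and
\[\zeta = \tfrac{1}{2^{k-1}}\lambda^{(0)} + \sum_{j=1}^{k-1}\tfrac{1}{2^{k-j}}\lambda^{(j)}.\]
Substituting and regrouping yields the decomposition (\ref{advdecompeq1}): the factor $\frac{1}{2}$ in front of $\zeta$ turns each coefficient $\frac{1}{2^{(k-1)-j+1}} = \frac{1}{2^{k-j}}$ from the inductive hypothesis into $\frac{1}{2^{k-j+1}}$, the $\frac{1}{2^{k-1}}$ coefficient of $\lambda^{(0)}$ becomes $\frac{1}{2^k}$, and $\frac{1}{2}\lambda^{(k)}$ supplies the missing $j=k$ term.

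There is no real obstacle here; the result is essentially a bookkeeping exercise once Lemma \ref{basicdecomp} is in hand. The only point deserving care is the geometric identity $\frac{1}{2^k}+\sum_{j=1}^{k}\frac{1}{2^{k-j+1}} = 1$, which ensures consistency (and should be implicitly verified by the induction) and explains why the coefficients in (\ref{advdecompeq1}) are indexed asymmetrically, with the extra factor of $\frac{1}{2^k}$ attached to $\lambda^{(0)}$ accounting for the ``leftover'' mass after $k$ halvings.
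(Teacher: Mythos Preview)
Your proof is correct and follows essentially the same approach as the paper: induction on $k$, with Lemma \ref{basicdecomp} applied to the finest map $T_k$ to peel off the top term, then the induction hypothesis applied to the remaining $\zeta$. The only cosmetic difference is that the paper indexes the induction as $k\to k+1$ while you use $k-1\to k$; also, your parenthetical about ``a set of full measure'' in the base case is unnecessary, since by hypothesis the range of $T_0$ literally contains at most one point.
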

\begin{proof}
We prove this result by induction on $k$. We first prove it for $k=0$ (in which case the sum in (\ref{advdecompeq1}) is void). Since the range of $T_{0}$ is a singleton $\{x_{0}\}$, we have $(T_{0})_{\#}\mu=\delta_{x_{0}}\in\mathcal{U}(E,1)$. Take $\lambda^{(0)}=\mu$. This proves the result for $k=0$.

Assume that the statement of the result is true for a fixed $k$. Let us prove the statement with $k$ being replaced by $k+1$. By Lemma \ref{basicdecomp} with $T=T_{k+1}$ and $r=2^{k+1}$, we can decompose $\mu$ as
\[\mu=\frac{1}{2}\lambda+\frac{1}{2}\zeta,\]
for some probability measures $\lambda$ and $\zeta$ on $E$ such that $(T_{k+1})_{\#}\lambda\in\mathcal{U}(E,2^{k+1})$. Apply the induction hypothesis to $\zeta$. We can decompose $\zeta$ as
\[\zeta=\frac{1}{2^{k}}\lambda^{(0)}+\sum_{j=1}^{k}\frac{1}{2^{k-j+1}}\lambda^{(j)},\]
for some probability measures $\lambda^{(0)},\ldots,\lambda^{(k)}$ such that $(T_{j})_{\#}\lambda^{(j)}\in\mathcal{U}(E,2^{j})$ for every $0\leq j\leq k$. Thus, we have
\begin{eqnarray*}
\mu=\frac{1}{2}\lambda+\frac{1}{2}\zeta&=&
\frac{1}{2}\lambda+\frac{1}{2^{k+1}}\lambda^{(0)}+\sum_{j=1}^{k}\frac{1}{2^{(k+1)-j+1}}\lambda^{(j)}\\&=&
\frac{1}{2^{k+1}}\lambda^{(0)}+\left(\frac{1}{2}\lambda+\sum_{j=1}^{k}\frac{1}{2^{(k+1)-j+1}}\lambda^{(j)}\right).
\end{eqnarray*}
Take $\lambda^{(k+1)}=\lambda$. Then
\[\mu=\frac{1}{2^{k+1}}\lambda^{(0)}+\sum_{j=1}^{k+1}\frac{1}{2^{(k+1)-j+1}}\lambda^{(j)}.\]
This completes the proof.
\end{proof}
\begin{proof}[Proof of Theorem \ref{main3}]
For each $0\leq j\leq k$, fix a finite subset $S_{j}$ of $E$ containing at most $2^{j}$ points. By Lemma \ref{measselect}, for each $0\leq j\leq k$, there is a mesurable map $T_{j}:E\to S_{j}$ such that
\begin{equation}\label{proofmain3eq1}
d_{E}(x,T_{j}(x))=d_{E}(x,S_{j}),
\end{equation}
for all $x\in E$. Applying Lemma \ref{advdecomp} to the maps $T_{j}$ for $0\leq j\leq k$, we can decompose $\mu$ as
\begin{equation}\label{proofmain3eq2}
\mu=\frac{1}{2^{k}}\lambda^{(0)}+\sum_{j=1}^{k}\frac{1}{2^{k-j+1}}\lambda^{(j)},
\end{equation}
for some probability measures $\lambda^{(0)},\ldots,\lambda^{(k)}$ on $E$ such that $(T_{j})_{\#}\lambda^{(j)}\in\mathcal{U}(E,2^{j})$ for every $0\leq j\leq k$. Observe that since $(T_{j})_{\#}\lambda^{(j)}\in\mathcal{U}(E,2^{j})$, the mass of this measure at each point is an integer multiple of $\frac{1}{2^{j}}$, and so the probability measure
\begin{equation}\label{proofmain3eq3}
\frac{1}{2^{k}}(T_{0})_{\#}\lambda^{(0)}+\sum_{j=1}^{k}\frac{1}{2^{k-j+1}}(T_{j})_{\#}\lambda^{(j)}
\end{equation}
is in $\mathcal{U}(E,2^{k+1})$, i.e., it is a discrete measure whose mass at each point is an integer multiple of $\frac{1}{2^{k+1}}$. So by the definition of $b_{2^{k+1},p}(\mu)$, (\ref{proofmain3eq2}) and Lemma \ref{mixlemma}, we have
\begin{eqnarray*}
b_{2^{k+1},p}(\mu)^{p}&\leq&W_{p}\left(\mu,\,\frac{1}{2^{k}}(T_{0})_{\#}\lambda^{(0)}+\sum_{j=1}^{k}\frac{1}{2^{k-j+1}}(T_{j})_{\#}\lambda^{(j)}\right)^{p}\\&\leq&
\frac{1}{2^{k}}W_{p}(\lambda^{(0)},(T_{0})_{\#}\lambda^{(0)})^{p}+\sum_{j=1}^{k}\frac{1}{2^{k-j+1}}W_{p}(\lambda^{(j)},(T_{j})_{\#}\lambda^{(j)})^{p}\\&\leq&
2\sum_{j=0}^{k}\frac{1}{2^{k-j+1}}W_{p}(\lambda^{(j)},(T_{j})_{\#}\lambda^{(j)})^{p},
\end{eqnarray*}
where the last sum is over all $0\leq j\leq k$ rather than $1\leq j\leq k$. For each $0\leq j\leq k$, by Lemma \ref{mutmu} and by (\ref{proofmain3eq1}),
\[W_{p}(\lambda^{(j)},(T_{j})_{\#}\lambda^{(j)})\leq\left(\int_{E}d_{E}(x,T_{j}(x))^{p}\,d\lambda^{(j)}(x)\right)^{1/p}=\left(\int_{E}d_{E}(x,S_{j})^{p}\,d\lambda^{(j)}(x)\right)^{1/p}.\]
Therefore,
\[b_{2^{k+1},p}(\mu)^{p}\leq2\sum_{j=0}^{k}\frac{1}{2^{k-j+1}}\int_{E}d_{E}(x,S_{j})^{p}\,d\lambda^{(j)}(x).\]
Since
\[\left(\int_{E}d_{E}(x,S_{j})^{p}\,d\lambda^{(j)}(x)\right)^{1/p}\leq\left(\int_{E}d_{E}(x,S_{j})^{q}\,d\lambda^{(j)}(x)\right)^{1/q},\]
for every $q\in[p,\infty)$, it follows that
\begin{eqnarray*}
b_{2^{k+1},p}(\mu)^{p}&\leq&
2\sum_{j=0}^{k}\frac{1}{2^{k-j+1}}\inf_{q\in[p,\infty)}\left(\int_{E}d_{E}(x,S_{j})^{q}\,d\lambda^{(j)}(x)\right)^{p/q}\\&=&
2\sum_{j=0}^{k}\inf_{q\in[p,\infty)}2^{(\frac{p}{q}-1)(k-j+1)}\left(\frac{1}{2^{k-j+1}}\int_{E}d_{E}(x,S_{j})^{q}\,d\lambda^{(j)}(x)\right)^{p/q}.
\end{eqnarray*}
But by (\ref{proofmain3eq2}), for each $j$, we have $\frac{1}{2^{k-j+1}}\lambda^{(j)}\leq\mu$, so
\[\frac{1}{2^{k-j+1}}\int_{E}d_{E}(x,S_{j})^{q}\,d\lambda^{(j)}(x)\leq\int_{E}d_{E}(x,S_{j})^{q}\,d\mu(x),\]
for every $0\leq j\leq k$. Thus, it follows that
\[b_{2^{k+1},p}(\mu)^{p}\leq2\sum_{j=0}^{k}\inf_{q\in[p,\infty)}2^{(\frac{p}{q}-1)(k-j+1)}\left(\int_{E}d_{E}(x,S_{j})^{q}\,d\mu(x)\right)^{p/q}.\]
Let $F(S_{j}):=\inf_{q\in[p,\infty)}2^{(\frac{p}{q}-1)(k-j+1)}\left(\int_{E}d_{E}(x,S_{j})^{q}\,d\mu(x)\right)^{p/q}$. Then $b_{2^{k+1},p}(\mu)^{p}\leq2\sum_{j=0}^{k}F(S_{j})$ and so taking infimum over all $S_{j}\subset E$ that contains at most $2^{j}$ points, for $0\leq j\leq k$, and using Lemma \ref{enpalternative}, we obtain
\[b_{2^{k+1},p}(\mu)^{p}\leq2\inf_{S_{0},\ldots,S_{k}}\sum_{j=0}^{k}F(S_{j})=2\sum_{j=0}^{k}\inf_{S_{j}}F(S_{j})=2\sum_{j=0}^{k}\inf_{q\in[p,\infty)}2^{(\frac{p}{q}-1)(k-j+1)}e_{2^{j},q}(\mu)^{p}.\]
Since this holds every $k\in\mathbb{N}\cup\{0\}$, by replacing $k$ by $k-1$, we have
\[b_{2^{k},p}(\mu)^{p}\leq2\sum_{j=0}^{k-1}\inf_{q\in[p,\infty)}2^{(\frac{p}{q}-1)(k-j)}e_{2^{j},q}(\mu)^{p},\]
for all $k\in\mathbb{N}$. This proves the result when $k\in\mathbb{N}$. When $k=0$, the result follows from the fact that $b_{1,p}(\mu)=e_{1,p}(\mu)$.
\end{proof}
\section{Proofs of the remaining results}\label{sectionremaining}
In this section, we prove Corollary \ref{main2corollary}, Corollary \ref{main4}, Corollary \ref{supmu} and Corollary \ref{main4beta}.
\begin{lemma}\label{2a}
For every $0\leq t\leq1$, we have
\[\frac{1}{1-2^{-t}}\leq\frac{2}{t}.\]
\end{lemma}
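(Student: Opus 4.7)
The inequality is equivalent to $1 - 2^{-t} \geq t/2$ for $t \in (0,1]$ (and trivial/interpreted as $\infty\le\infty$ at $t=0$). So the plan is to establish this linear lower bound on the function $g(t) := 1 - 2^{-t}$ on the interval $[0,1]$.

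The key observation is that $g$ is concave on $[0,1]$: we compute $g''(t) = -(\ln 2)^{2}\,2^{-t} < 0$. Since $g(0) = 0$ and $g(1) = 1/2$, concavity yields
\[g(t) \;\geq\; (1-t)\,g(0) + t\,g(1) \;=\; \tfrac{t}{2},\]
for every $t \in [0,1]$. Rearranging, $\dfrac{1}{1-2^{-t}} \leq \dfrac{2}{t}$, which is exactly the desired inequality.

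The main (and essentially only) step is verifying concavity of $g$, which is a one-line calculation. There is no real obstacle here; the lemma is a convenient analytic inequality packaged for later use (presumably to bound geometric-series coefficients of the form $\sum 2^{-t k}$ by $2/t$ when computing sums in the proofs of Corollary \ref{main2corollary} and Corollary \ref{main4}).
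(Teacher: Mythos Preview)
Your proof is correct and essentially identical to the paper's: the paper applies convexity of $t\mapsto 2^{-t}$ to write $2^{-t}\le t\cdot 2^{-1}+(1-t)\cdot 2^{0}=1-\tfrac{t}{2}$, which is exactly your concavity argument for $g(t)=1-2^{-t}$ interpolating between $g(0)=0$ and $g(1)=\tfrac12$. The only cosmetic difference is that you verify concavity via the second derivative whereas the paper simply invokes convexity.
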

\begin{proof}
By convexity,
\[2^{-t}=2^{-(t\cdot 1+(1-t)\cdot 0)}\leq t\cdot 2^{-1}+(1-t)\cdot 2^{0}=1-\frac{t}{2}.\]
So the result follows.
\end{proof}
\begin{proof}[Proof of Corollary \ref{main2corollary}]
We first prove (\ref{main2corollaryeq2}). The lower bound follows from $\mathbb{E}[W_{p}(\mu_{n},\mu)]\geq b_{n,p}(\mu)$. As for the upper bound, let $a=\limsup_{n\to\infty}n^{\beta}\cdot b_{n,p}(\mu)$ (which could be $\infty$) and fix $\epsilon>0$. Then there exists $N\in\mathbb{N}$ such that $b_{n,p}(\mu)\leq(a+\epsilon)n^{-\beta}$ for all $n\geq N$. So for all $n\geq 2N$,
\begin{eqnarray*}
\sum_{k=\lfloor\log_{2}N\rfloor+1}^{\lfloor\log_{2}n\rfloor}\left(\frac{2^{k}}{n}\right)^{1/(2p)}\cdot b_{2^{k},p}(\mu)&\leq&
(a+\epsilon)\sum_{k=\lfloor\log_{2}N\rfloor+1}^{\lfloor\log_{2}n\rfloor}\left(\frac{2^{k}}{n}\right)^{1/(2p)}\cdot(2^{k})^{-\beta}\\&\leq&
(a+\epsilon)n^{-1/(2p)}\cdot\sum_{k=-\infty}^{\lfloor\log_{2}n\rfloor}2^{k(\frac{1}{2p}-\beta)}\\&=&
(a+\epsilon)n^{-1/(2p)}\cdot 2^{\lfloor\log_{2}n\rfloor(\frac{1}{2p}-\beta)}/(1-2^{\beta-\frac{1}{2p}})\\&\leq&
(a+\epsilon)n^{-\beta}/(1-2^{\beta-\frac{1}{2p}})\\&\leq&
2(a+\epsilon)n^{-\beta}/\left(\frac{1}{2p}-\beta\right),
\end{eqnarray*}
where the second step follows by extending the sum to start from $k=-\infty$, the third step follows from the fact that it is a geometric series, the fourth step follows by using $\lfloor\log_{2}n\rfloor\leq\log_{2}n$, and the last step follows from Lemma \ref{2a}. So by Theorem \ref{main2},
\begin{eqnarray*}
\left(\mathbb{E}[W_{p}(\mu_{n},\mu)^{p}]\right)^{1/p}&\leq&5\cdot\sum_{k=0}^{\lfloor\log_{2}n\rfloor}\left(\frac{2^{k}}{n}\right)^{1/(2p)}\cdot b_{2^{k},p}(\mu)\\&=&
5\cdot\sum_{k=0}^{\lfloor\log_{2}N\rfloor}\left(\frac{2^{k}}{n}\right)^{1/(2p)}\cdot b_{2^{k},p}(\mu)+5\cdot\sum_{k=\lfloor\log_{2}N\rfloor+1}^{\lfloor\log_{2}n\rfloor}\left(\frac{2^{k}}{n}\right)^{1/(2p)}\cdot b_{2^{k},p}(\mu)
\\&\leq&
5\cdot\sum_{k=0}^{\lfloor\log_{2}N\rfloor}\left(\frac{2^{k}}{n}\right)^{1/(2p)}\cdot b_{2^{k},p}(\mu)+10(a+\epsilon)n^{-\beta}/\left(\frac{1}{2p}-\beta\right).
\end{eqnarray*}
Multiplying both sides by $n^{\beta}$ and taking $\limsup_{n\to\infty}$ completes the proof of (\ref{main2corollaryeq2}).

To prove (\ref{main2corollaryeq1}), note that the lower bound follows from $\mathbb{E}[W_{p}(\mu_{n},\mu)]\geq b_{n,p}(\mu)$ and the fact that by Theorem \ref{main1},
\[\left(\mathbb{E}[W_{p}(\mu_{n},\mu)^{p}]\right)^{1/p}\geq\mathbb{E}[W_{1}(\mu_{n},\mu)]\geq\frac{1}{22\sqrt{\ln(2n)}}\cdot\frac{1}{\sqrt{n}}\cdot b_{1,1}(\mu)>0.\]
To prove the upper bound in (\ref{main2corollaryeq1}), let
\[\alpha_{0}=\limsup_{n\to\infty}\frac{\ln\left(\mathbb{E}[W_{p}(\mu_{n},\mu)^{p}]\right)^{1/p}}{\ln n}.\]
Note that $\alpha_{0}\leq 0$ since $\mu\in\mathcal{P}_{p}(E)$. If $\alpha_{0}\leq-\frac{1}{2p}$ then the upper bound in (\ref{main2corollaryeq1}) is trivial. If not, then $0\geq\alpha_{0}>-\frac{1}{2p}$. Fix $\epsilon>0$ so that $\alpha_{0}-\epsilon>-\frac{1}{2p}$. Take $\beta=\epsilon-\alpha_{0}$. Then $0<\epsilon\leq\beta<\frac{1}{2p}$. By the definition of $\alpha_{0}$, there are infinitely many values of $n\in\mathbb{N}$ for which $\ln\left(\mathbb{E}[W_{p}(\mu_{n},\mu)^{p}]\right)^{1/p}\geq(\alpha_{0}-\epsilon)\ln n=-\beta\ln n$, and so
\[\limsup_{n\to\infty}n^{\beta}\cdot\left(\mathbb{E}[W_{p}(\mu_{n},\mu)^{p}]\right)^{1/p}\geq 1.\]
So by (\ref{main2corollaryeq2}), we have
\[\limsup_{n\to\infty}n^{\beta}\cdot b_{n,p}(\mu)\geq 1.\]
Thus, there are infinitely many values of $n\in\mathbb{N}$ for which $n^{\beta}\cdot b_{n,p}(\mu)\geq\frac{1}{2}$, so
\[\limsup_{n\to\infty}\frac{\ln b_{n,p}(\mu)}{\ln n}\geq-\beta=\alpha_{0}-\epsilon.\]
Taking $\epsilon\to 0$ completes the proof of (\ref{main2corollaryeq1}).
\end{proof}
\begin{proof}[Proof of Corollary \ref{main4}]
By Theorem \ref{main2},
\[\left(\mathbb{E}[W_{p}(\mu_{n},\mu)^{p}]\right)^{1/p}\leq5\cdot\sum_{k=0}^{\lfloor\log_{2}n\rfloor}\left(\frac{2^{k}}{n}\right)^{1/(2p)}\cdot b_{2^{k},p}(\mu),\]
and by Theorem \ref{main3},
\[b_{2^{k},p}(\mu)\leq\left(2\sum_{j=0}^{k}2^{(\frac{p}{q}-1)(k-j)}\cdot e_{2^{j},q}(\mu)^{p}\right)^{1/p}\leq 2\sum_{j=0}^{k}2^{(\frac{1}{q}-\frac{1}{p})(k-j)}\cdot e_{2^{j},q}(\mu),\]
for every $k\in\mathbb{N}\cup\{0\}$. Therefore,
\begin{eqnarray}\label{main4eq1}
\left(\mathbb{E}[W_{p}(\mu_{n},\mu)^{p}]\right)^{1/p}&\leq&
10\cdot\sum_{k=0}^{\lfloor\log_{2}n\rfloor}\left(\frac{2^{k}}{n}\right)^{1/(2p)}\cdot\sum_{j=0}^{k}2^{(\frac{1}{q}-\frac{1}{p})(k-j)}\cdot e_{2^{j},q}(\mu)\\&=&
10\cdot\sum_{j=0}^{\lfloor\log_{2}n\rfloor}n^{-1/(2p)}\cdot e_{2^{j},q}(\mu)\cdot\sum_{k=j}^{\lfloor\log_{2}n\rfloor} 2^{\frac{k}{2p}+(\frac{1}{q}-\frac{1}{p})(k-j)},\nonumber
\end{eqnarray}
where we switch the order of the two sums. The inner sum is a geometric series and we now bound this geometric series in three different cases.

Case 1: If $q>2p$, then
\[\sum_{k=j}^{\lfloor\log_{2}n\rfloor}2^{\frac{k}{2p}+(\frac{1}{q}-\frac{1}{p})(k-j)}\leq
\sum_{k=j}^{\infty}2^{\frac{k}{2p}+(\frac{1}{q}-\frac{1}{p})(k-j)}=\frac{2^{j/(2p)}}{1-2^{\frac{1}{q}-\frac{1}{2p}}}\leq\frac{2\cdot 2^{j/(2p)}}{\frac{1}{2p}-\frac{1}{q}},\]
where the last step follows from Lemma \ref{2a}. By (\ref{main4eq1}), it follows that
\[\left(\mathbb{E}[W_{p}(\mu_{n},\mu)^{p}]\right)^{1/p}\leq\frac{20}{\frac{1}{2p}-\frac{1}{q}}\sum_{j=0}^{\lfloor\log_{2}n\rfloor}\left(\frac{2^{j}}{n}\right)^{1/(2p)}\cdot e_{2^{j},q}(\mu).\]

Case 2: If $q=2p$, then
\[\sum_{k=j}^{\lfloor\log_{2}n\rfloor} 2^{\frac{k}{2p}+(\frac{1}{q}-\frac{1}{p})(k-j)}=\sum_{k=j}^{\lfloor\log_{2}n\rfloor}2^{j/(2p)}=(\lfloor\log_{2}n\rfloor-j+1)\cdot 2^{j/(2p)}\leq(\lfloor\log_{2}n\rfloor+1)\cdot 2^{j/(2p)}.\]
By (\ref{main4eq1}), it follows that
\[\left(\mathbb{E}[W_{p}(\mu_{n},\mu)^{p}]\right)^{1/p}\leq10(\lfloor\log_{2}n\rfloor+1)\cdot\sum_{j=0}^{\lfloor\log_{2}n\rfloor}\left(\frac{2^{j}}{n}\right)^{1/(2p)}\cdot e_{2^{j},q}(\mu).\]

Case 3: If $q<2p$, then
\[\sum_{k=j}^{\lfloor\log_{2}n\rfloor}2^{\frac{k}{2p}+(\frac{1}{q}-\frac{1}{p})(k-j)}\leq
\sum_{k=-\infty}^{\lfloor\log_{2}n\rfloor}2^{\frac{k}{2p}+(\frac{1}{q}-\frac{1}{p})(k-j)}=
\frac{2^{\frac{k_{0}}{2p}+(\frac{1}{q}-\frac{1}{p})(k_{0}-j)}}{1-2^{\frac{1}{2p}-\frac{1}{q}}},\]
where $k_{0}=\lfloor\log_{2}n\rfloor$. Since the exponent
\begin{eqnarray*}
\frac{k_{0}}{2p}+\left(\frac{1}{q}-\frac{1}{p}\right)(k_{0}-j)&=&\left(\frac{1}{q}-\frac{1}{2p}\right)k_{0}+\left(\frac{1}{p}-\frac{1}{q}\right)j\\&\leq&
\left(\frac{1}{q}-\frac{1}{2p}\right)\log_{2}n+\left(\frac{1}{p}-\frac{1}{q}\right)j,
\end{eqnarray*}
it follows that
\[\sum_{k=j}^{\lfloor\log_{2}n\rfloor}2^{\frac{k}{2p}+(\frac{1}{q}-\frac{1}{p})(k-j)}\leq\frac{1}{1-2^{\frac{1}{2p}-\frac{1}{q}}}\cdot n^{\frac{1}{q}-\frac{1}{2p}}\cdot 2^{(\frac{1}{p}-\frac{1}{q})j}\leq\frac{2}{\frac{1}{q}-\frac{1}{2p}}\cdot n^{\frac{1}{q}-\frac{1}{2p}}\cdot 2^{(\frac{1}{p}-\frac{1}{q})j},\]
where the last step uses Lemma \ref{2a}. So by (\ref{main4eq1}), we have
\begin{eqnarray*}
\left(\mathbb{E}[W_{p}(\mu_{n},\mu)^{p}]\right)^{1/p}&\leq&
\frac{20}{\frac{1}{q}-\frac{1}{2p}}\cdot\sum_{j=0}^{\lfloor\log_{2}n\rfloor}n^{-1/(2p)}\cdot e_{2^{j},q}(\mu)\cdot n^{\frac{1}{q}-\frac{1}{2p}}\cdot 2^{(\frac{1}{p}-\frac{1}{q})j}\\&=&
\frac{20}{\frac{1}{q}-\frac{1}{2p}}\cdot\sum_{j=0}^{\lfloor\log_{2}n\rfloor}\left(\frac{2^{j}}{n}\right)^{\frac{1}{p}-\frac{1}{q}}\cdot e_{2^{j},q}(\mu).
\end{eqnarray*}
\end{proof}
\begin{lemma}\label{BKlemma}
Let $n\in\mathbb{N}$ and $\frac{1}{\max(n,2)}\leq p\leq\frac{1}{2}$. If $W_{1},\ldots,W_{n}$ are independent random variables such that $\mathbb{P}(W_{i}=1)=p$ and $\mathbb{P}(W_{i}=0)=1-p$ for all $i$, then
\[\mathbb{E}\left|\frac{1}{n}\sum_{i=1}^{n}W_{i}-p\right|\geq\frac{1}{2}\sqrt{\frac{p}{n}}.\]
\end{lemma}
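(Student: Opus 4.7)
The plan is to prove the sharp inequality via de Moivre's classical identity, which expresses $\mathbb{E}|S_n-np|$ through a single point-mass of a smaller binomial. Writing $S = W_1+\cdots+W_n$, $m = \lceil np\rceil$, and letting $S' \sim \mathrm{Binomial}(n-1,p)$, the first step is the identity
\[\mathbb{E}|S-np| = 2np(1-p)\,P(S'=m-1),\]
which follows from $\mathbb{E}|S-np|=2\mathbb{E}(S-np)_{+}$ together with
\[\sum_{k\geq m}(k-np)\binom{n}{k}p^{k}(1-p)^{n-k} = np\bigl[P(S'\geq m-1)-P(S\geq m)\bigr] = np(1-p)\,P(S'=m-1),\]
using $k\binom{n}{k}=n\binom{n-1}{k-1}$ and the convolution $P(S\geq m) = (1-p)P(S'\geq m) + pP(S'\geq m-1)$. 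The structural observation making this useful is that $m-1$ is always the mode of $S'$ (the mode of $\mathrm{Binomial}(n-1,p)$ is $\lfloor np\rfloor$; when $np\notin\mathbb{N}$ this equals $m-1$, and when $np\in\mathbb{N}$ the value $m-1=np-1$ is tied for mode), so $P(S'=m-1)=\max_{k}P(S'=k)$.

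The argument then splits into two cases. When $m=1$ (equivalently $np\leq 1$, which under the hypothesis forces $n=1,\,p=1/2$ or $n\geq 2,\,p=1/n$), the formula collapses to $\mathbb{E}|S-np|=2np(1-p)^{n}$, and the inequality $16np(1-p)^{2n}\geq 1$ is checked directly---for $p=1/n$ it reduces to $(1-1/n)^{n}\geq 1/4$, which holds for $n\geq 2$ with equality exactly at $n=2$. When $m\geq 2$, both $m-1\geq 1$ and $n-m\geq 1$ (the latter from $p\leq 1/2\leq 1-1/n$), so Robbins' explicit Stirling bounds yield
\[P(S'=m-1) \geq \frac{1}{\sqrt{2\pi(n-1)q(1-q)}}\,\exp\!\bigl(-(n-1)D(q\|p) - \tfrac{1}{12(m-1)} - \tfrac{1}{12(n-m)}\bigr),\]
where $q=(m-1)/(n-1)$ and $D$ is the Kullback--Leibler divergence. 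Since $|q-p|\leq 1/n$, one has $(n-1)D(q\|p) = O(1/(np(1-p)))$, while $(n-1)q(1-q)\leq np(1-p)$; substituting, using $p\leq 1/2$ to pass from $\sqrt{np(1-p)}$ to $\sqrt{np/2}$, and exploiting the slack $2/\sqrt{\pi}\approx 1.128>1$ yields $\mathbb{E}|S-np|\geq \sqrt{np}/2$, hence $\mathbb{E}|\bar W - p| \geq \tfrac{1}{2}\sqrt{p/n}$.

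The principal obstacle is the sharpness of the constant $1/2$: equality holds at $n=2,\,p=1/2$, where $\mathbb{E}|S_2 - 1| = 1/2 = \sqrt{np}/2$, so any proof must be tight at that point. Generic substitutes such as the Paley--Zygmund inequality or the Hölder interpolation $\mathbb{E}|Y|\geq(\mathbb{E}Y^{2})^{3/2}/\sqrt{\mathbb{E}Y^{4}}$ only give a constant strictly smaller than $1/\sqrt{2}$, which is why the de Moivre identity is essential. The hypothesis $p\leq 1/2$ is used precisely to pass from $\sqrt{np(1-p)}$ to $\sqrt{np/2}$, while $p\geq 1/\max(n,2)$ keeps $np\geq 1$ and controls the boundary between the two cases.
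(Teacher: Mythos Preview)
Your approach via the de Moivre identity
\[
\mathbb{E}|S-np|=2np(1-p)\,\mathbb{P}(S'=m-1),\qquad m=\lceil np\rceil,\ S'\sim\mathrm{Bin}(n-1,p),
\]
is correct and is in fact the engine behind the Berend--Kontorovich theorem. The paper, however, does not redo this computation: its proof of the lemma is two lines, handling $n=1$ by hand and, for $n\ge 2$, simply invoking \cite[Theorem~1]{BK} to get
\[
\mathbb{E}\Bigl|\tfrac{1}{n}\sum_i W_i-p\Bigr|\ge\frac{1}{\sqrt{2}}\sqrt{\tfrac{p(1-p)}{n}}\ge\sqrt{\tfrac{p}{4n}}.
\]
So you are reproving the cited result rather than using it. That is a legitimate choice if self-containment is desired, and your identification of the equality case $n=2,\ p=1/2$ is exactly right.

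The one place your sketch is genuinely incomplete is the $m\ge 2$ branch. After the de Moivre identity you need
\[
2np(1-p)\,\mathbb{P}(S'=m-1)\ge\tfrac{1}{2}\sqrt{np},
\]
and the total slack available after using $1-p\ge\tfrac12$ and the leading Stirling factor $1/\sqrt{2\pi}$ is only $\ln(2/\sqrt{\pi})\approx 0.12$. Against this you must charge \emph{all} of: the KL term $(n-1)D(q\|p)$, the Robbins corrections $\tfrac{1}{12(m-1)}+\tfrac{1}{12(n-m)}$, and the discrepancy between $(n-1)q(1-q)$ and $np(1-p)$. For the smallest admissible case $n=3,\ p=1/2$ (so $m=2$), the Robbins terms alone already sum to $1/6>0.12$, so the Stirling route does not close by itself there; one has to verify a finite set of small-$n$ cases directly and invoke Stirling only once $m-1$ and $n-m$ are moderately large. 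Your write-up asserts the error terms are absorbed but does not carry out this split, and the bound $|q-p|\le 1/n$ is also slightly off (one gets $|q-p|\le 1/(n-1)$). None of this is fatal---the program can be completed---but as written the $m\ge 2$ case is a plausibility argument, not a proof.
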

\begin{proof}
If $n=1$ then $p=\frac{1}{2}$ so $\mathbb{E}|W_{1}-p|=\mathbb{E}|W_{1}-\frac{1}{2}|=\frac{1}{2}$. If $n\geq 2$, then $\frac{1}{n}\leq p\leq1-\frac{1}{n}$ and so by \cite[Theorem 1]{BK}, we have
\[\mathbb{E}\left|\frac{1}{n}\sum_{i=1}^{n}W_{i}-p\right|\geq\frac{1}{\sqrt{2}}\cdot\frac{1}{n}\cdot\sqrt{np(1-p)}=\sqrt{\frac{p(1-p)}{2n}}\geq\sqrt{\frac{p}{4n}}.\]
\end{proof}
\begin{proof}[Proof of Corollary \ref{supmu}]
Taking $q\to\infty$ in Corollary \ref{main4} gives
\[\left(\mathbb{E}[W_{p}(\mu_{n},\mu)^{p}]\right)^{1/p}\leq
40p\cdot\sum_{k=0}^{\lfloor\log_{2}n\rfloor}\left(\frac{2^{k}}{n}\right)^{1/(2p)}\cdot\lim_{q\to\infty}e_{2^{k},q}(\mu).\]
For every $\epsilon>0$ such that $N(E,\epsilon)\leq 2^{k}$, there exists $S\subset E$ with cardinality at most $2^{k}$ such that $d_{E}(x,S)\leq\epsilon$ for all $x\in E$, and so by Lemma \ref{enpalternative}, we have $e_{2^{k},q}(\mu)\leq\epsilon$ for all $q\geq 1$. Thus, by the definition of $h(2^{k})$, we have $e_{2^{k},q}(\mu)\leq h(2^{k})$ for all $q\geq 1$. This proves the upper bound in Corollary \ref{supmu}.

To prove the lower bound, we need the notion of $\epsilon$-separating set. Let $\epsilon>0$. A subset $S\subset E$ is {\it $\epsilon$-separating} if $d_{E}(x,y)>\epsilon$ whenever $x\neq y$ in $S$.

Fix an integer $1\leq r\leq n$. Our goal is to show that
\begin{equation}\label{supmugoal}
\sup_{\mu\in\mathcal{P}_{p}(E)}\left(\mathbb{E}[W_{p}(\mu_{n},\mu)^{p}]\right)^{1/p}\geq\frac{1}{4}\cdot\left(\frac{r}{n}\right)^{1/(2p)}h(r).
\end{equation}
Let $0<\delta<h(r)$ and let $\epsilon=h(r)-\delta$.

{\bf Claim:} There exists an $\epsilon$-separating subset $S$ of $E$ with cardinality $|S|=\max(r,2)$.

To prove the claim, let $S_{0}$ be a maximal $\epsilon$-separating subset of $E$. Then by maximality, $S_{0}$ is an $\epsilon$-cover of $E$, i.e., every $y\in E$ has distance at most $\epsilon$ from some point in $S_{0}$. So $N(E,\epsilon)\leq|S_{0}|$. But by the definition of $h(r)$, we have $N(E,\epsilon)=N(E,h(r)-\delta)>r$. Therefore, $|S_{0}|>r$. The claim follows by taking $S$ to be any subset of $S_{0}$ with cardinality $|S|=\max(r,2)$. This proves the claim.

Take
\[\mu=\frac{1}{|S|}\sum_{x\in S}\delta_{x}.\]
Since $S$ is $\epsilon$-separating, if $\nu$ is another probability measure on $S$, then
\begin{equation}\label{supmueq1}
W_{p}(\mu,\nu)^{p}\geq\frac{\epsilon^{p}}{2}\sum_{x\in S}|\mu(\{x\})-\nu(\{x\})|.
\end{equation}
This is because $d_{E}(x,y)\geq\epsilon\cdot I(x\neq y)$ for all $x,y\in S$, where $I(x\neq y)=\begin{cases}1&x\neq y\\0,&x=y\end{cases}$, and because the $p$-Wasserstein distance between any two probability measures with respect to the metric $I(x\neq y)$ coincides with the $p$th root of the total variation distance between the two measures. More precisely,
\begin{eqnarray*}
W_{p}(\mu,\nu)^{p}&=&\inf_{\gamma}\int_{S\times S}d_{E}(x,y)^{p}\,d\gamma(x,y)\\&\geq&
\inf_{\gamma}\epsilon^{p}\int_{S\times S}I(x\neq y)\,d\gamma(x,y)=\inf_{\gamma}\epsilon^{p}\left(1-\sum_{x\in S}\gamma(\{(x,x)\})\right),
\end{eqnarray*}
where the infima are over all probability measures $\gamma$ on $S\times S$ with marginal distributions $\mu$ and $\nu$. For every such measure $\gamma$, we have
\begin{eqnarray*}
1-\sum_{x\in S}\gamma(\{(x,x)\})&\geq&
1-\sum_{x\in S}\min(\mu(\{x\}),\nu(\{x\}))\\&=&
\frac{1}{2}\sum_{x\in S}\left[\mu(\{x\})+\nu(\{x\})-2\min(\mu(\{x\}),\nu(\{x\}))\right]\\&=&
\frac{1}{2}\sum_{x\in S}|\mu(\{x\})-\nu(\{x\})|.
\end{eqnarray*}
This proves (\ref{supmueq1}). Take $\nu=\mu_{n}$. Since $|S|=\max(r,2)$, we have $\frac{1}{\max(n,2)}\leq\frac{1}{|S|}\leq\frac{1}{2}$ so by Lemma \ref{BKlemma},
\[\mathbb{E}|\mu(\{x\})-\mu_{n}(\{x\})|\geq\frac{1}{2}\sqrt{\frac{\mu(\{x\})}{n}}=\frac{1}{2\sqrt{n|S|}},\]
for each $x\in S$. So by (\ref{supmueq1}) that
\[\mathbb{E}[W_{p}(\mu_{n},\mu)^{p}]\geq\frac{\epsilon^{p}|S|}{2}\cdot\frac{1}{2\sqrt{n|S|}}=\frac{\epsilon^{p}}{4}\sqrt{\frac{|S|}{n}}\geq\frac{\epsilon^{p}}{4}\sqrt{\frac{r}{n}}=\frac{(h(r)-\delta)^{p}}{4}\sqrt{\frac{r}{n}}.\]
Thus,
\[\sup_{\mu\in\mathcal{P}_{p}(E)}\mathbb{E}[W_{p}(\mu_{n},\mu)^{p}]\geq\frac{(h(r)-\delta)^{p}}{4}\sqrt{\frac{r}{n}}.\]
Taking $\delta\to0$, we obtain (\ref{supmugoal}) and the result follows.
\end{proof}
\begin{proof}[Proof of Corollary \ref{main4beta}]
Before we apply Corollary \ref{main4}, let us show that $p<q<2p$. Since $\frac{1}{p+\epsilon}-\beta\leq\frac{1}{p+\epsilon}<\frac{1}{p}$, we have $q>p$. Since $\beta<\frac{1}{2p}$, we have $\frac{3}{4p}+\frac{\beta}{2}<\frac{1}{p}$ and so
\[\epsilon<\frac{p^{2}}{2}\left(\frac{1}{2p}-\beta\right)=\frac{1}{2}\cdot\frac{\frac{1}{2}-\beta p}{1/p}<\frac{1}{2}\cdot\frac{\frac{1}{2}-\beta p}{\frac{3}{4p}+\frac{\beta}{2}},\]
which implies
\[p+\epsilon<\frac{1}{2}\left(2p+\frac{\frac{1}{2}-\beta p}{\frac{3}{4p}+\frac{\beta}{2}}\right)=\frac{1}{\frac{3}{4p}+\frac{\beta}{2}}.\]
Hence,
\begin{equation}\label{main4betaeq1}
\frac{1}{p+\epsilon}-\beta>\frac{3}{4p}-\frac{\beta}{2}>\frac{3}{4p}-\frac{1}{4p}=\frac{1}{2p},
\end{equation}
and so $q<2p$. Thus, we have proved $p<q<2p$. By Corollary \ref{main4},
\[\left(\mathbb{E}[W_{p}(\mu_{n},\mu)^{p}]\right)^{1/p}\leq\frac{20}{\frac{1}{q}-\frac{1}{2p}}\cdot\sum_{k=0}^{\lfloor\log_{2}n\rfloor}\left(\frac{2^{k}}{n}\right)^{\frac{1}{p}-\frac{1}{q}}\cdot e_{2^{k},q}(\mu).\]
Since by (\ref{main4betaeq1}), $\frac{1}{p+\epsilon}-\beta>\frac{3}{4p}-\frac{\beta}{2}$, the denominator
\[\frac{1}{q}-\frac{1}{2p}=\left(\frac{1}{p+\epsilon}-\beta\right)-\frac{1}{2p}>\left(\frac{3}{4p}-\frac{\beta}{2}\right)-\frac{1}{2p}=\frac{1}{4p}-\frac{\beta}{2}=\frac{1}{2}\left(\frac{1}{2p}-\beta\right),\]
so
\begin{equation}\label{main4betaeq2}
\left(\mathbb{E}[W_{p}(\mu_{n},\mu)^{p}]\right)^{1/p}\leq\frac{40}{\frac{1}{2p}-\beta}\cdot\sum_{k=0}^{\lfloor\log_{2}n\rfloor}\left(\frac{2^{k}}{n}\right)^{\frac{1}{p}-\frac{1}{q}}\cdot e_{2^{k},q}(\mu).
\end{equation}
Observe that
\begin{equation}\label{main4betaeq3}
\frac{1}{p}-\frac{1}{q}-\beta=\frac{1}{p}-\frac{1}{p+\epsilon}=\frac{\epsilon}{p(p+\epsilon)}>0.
\end{equation}
Let $a=\limsup_{n\to\infty}n^{\beta}\cdot e_{n,q}(\mu)$. Fix $\delta>0$. Then there exists $N\in\mathbb{N}$ such that $e_{n,q}(\mu)\leq(a+\delta)n^{-\beta}$ for all $n\geq N$. So for all $n\geq 2N$,
\begin{eqnarray*}
\sum_{k=\lfloor\log_{2}N\rfloor+1}^{\lfloor\log_{2}n\rfloor}\left(\frac{2^{k}}{n}\right)^{\frac{1}{p}-\frac{1}{q}}\cdot e_{2^{k},q}(\mu)&\leq&
(a+\delta)\sum_{k=\lfloor\log_{2}N\rfloor+1}^{\lfloor\log_{2}n\rfloor}\left(\frac{2^{k}}{n}\right)^{\frac{1}{p}-\frac{1}{q}}\cdot(2^{k})^{-\beta}\\&\leq&
(a+\delta)n^{\frac{1}{q}-\frac{1}{p}}\cdot\sum_{k=-\infty}^{\lfloor\log_{2}n\rfloor}2^{k(\frac{1}{p}-\frac{1}{q}-\beta)}\\&=&
(a+\delta)n^{\frac{1}{q}-\frac{1}{p}}\cdot 2^{\lfloor\log_{2}n\rfloor(\frac{1}{p}-\frac{1}{q}-\beta)}/(1-2^{-(\frac{1}{p}-\frac{1}{q}-\beta)})\\&\leq&
(a+\delta)n^{-\beta}/(1-2^{-(\frac{1}{p}-\frac{1}{q}-\beta)})\\&\leq&
2(a+\delta)n^{-\beta}/\left(\frac{1}{p}-\frac{1}{q}-\beta\right)
\\&=&
2(a+\delta)n^{-\beta}\cdot\frac{p(p+\epsilon)}{\epsilon},
\end{eqnarray*}
where the second step follows by extending the sum to start from $k=-\infty$, the third step follows from the fact that it is a geometric series which converges by (\ref{main4betaeq3}), the fourth step follows by using $\lfloor\log_{2}n\rfloor\leq\log_{2}n$, the fifth step follows from Lemma \ref{2a} and the last step follows from (\ref{main4betaeq3}). So by (\ref{main4betaeq2}),
\begin{eqnarray*}
\left(\mathbb{E}[W_{p}(\mu_{n},\mu)^{p}]\right)^{1/p}&\leq&
\frac{40}{\frac{1}{2p}-\beta}\left(\sum_{k=0}^{\lfloor\log_{2}N\rfloor}\left(\frac{2^{k}}{n}\right)^{\frac{1}{p}-\frac{1}{q}}\cdot e_{2^{k},q}(\mu)+\sum_{k=\lfloor\log_{2}N\rfloor+1}^{\lfloor\log_{2}n\rfloor}\left(\frac{2^{k}}{n}\right)^{\frac{1}{p}-\frac{1}{q}}\cdot e_{2^{k},q}(\mu)\right)
\\&\leq&\frac{40}{\frac{1}{2p}-\beta}\left(\sum_{k=0}^{\lfloor\log_{2}N\rfloor}\left(\frac{2^{k}}{n}\right)^{\frac{1}{p}-\frac{1}{q}}\cdot e_{2^{k},q}(\mu)+2(a+\delta)n^{-\beta}\cdot\frac{p(p+\epsilon)}{\epsilon}\right).
\end{eqnarray*}
Since $\beta-(\frac{1}{p}-\frac{1}{q})<0$ by (\ref{main4betaeq3}), multiplying by $n^{\beta}$ on both sides and taking $\limsup_{n\to\infty}$, we obtain
\[\limsup_{n\to\infty}n^{\beta}\cdot\left(\mathbb{E}[W_{p}(\mu_{n},\mu)^{p}]\right)^{1/p}\leq
\frac{40}{\frac{1}{2p}-\beta}\left(2(a+\delta)\cdot\frac{p(p+\epsilon)}{\epsilon}\right).\]
Taking $\delta\to 0$ and using the fact that $\epsilon<\frac{p^{2}}{2}(\frac{1}{2p}-\beta)\leq\frac{p^{2}}{2}\cdot\frac{1}{2p}=\frac{p}{4}$, we get
\[\limsup_{n\to\infty}n^{\beta}\cdot\left(\mathbb{E}[W_{p}(\mu_{n},\mu)^{p}]\right)^{1/p}\leq
\frac{80a}{\frac{1}{2p}-\beta}\cdot\frac{p(p+\frac{p}{4})}{\epsilon}=\frac{100p^{2}a}{(\frac{1}{2p}-\beta)\epsilon}.\]
\end{proof}

\end{document}